\newcommand{\eps}{\varepsilon}
\newcommand{\al}{\alpha}
\newcommand{\gm}{\gamma}
\newcommand{\de}{\delta}
\newcommand{\kap}{\varkappa}
\newcommand{\lm}{\lambda}
\newcommand{\si}{\sigma}
\newcommand{\vphi}{\varphi}
\newcommand{\om}{\omega}
\newcommand{\edin}{\mathbf{1}}
\newcommand{\nul}{\mathbf{0}}
\newcommand{\eq}{\equiv}
\newcommand{\vrn}{\varnothing}
\newcommand{\vd}{\vdash}
\newcommand{\tu}{\vartriangle}
\newcommand{\barphi}{\overline{\varphi}}
\newcommand{\phibar}{\underline{\varphi}}
\newcommand{\Em}{E_{\mathrm{m}}}
\newcommand{\Sm}{S_{\mathrm{m}}}
\newcommand{\RM}{\mathfrak{RM}}
\newcommand{\cB}{\mathcal{B}}
\newcommand{\cC}{\mathcal{C}}
\newcommand{\cE}{\mathcal{E}}
\newcommand{\cEm}{\mathcal{E}_{\mathrm{m}}}
\newcommand{\cF}{\mathcal{F}}
\newcommand{\cG}{\mathcal{G}}
\newcommand{\cK}{\mathcal{K}}
\newcommand{\cP}{\mathcal{P}}
\newcommand{\Nbb}{\mathbb{N}}
\newcommand{\Rbb}{\mathbb{R}}
\newcommand{\extR}{\overline{\mathbb{R}}}
\newcommand{\razn}{\setminus}
\newcommand{\add}[2]{\bigcup\left( #1 \mid #2\right)}
\newcommand{\net}[2]{\left(#1\mid #2\right)}
\newcommand{\seq}[2]{\left(#1\mid #2\right)}
\newcommand{\scol}[2]{\left(#1\mid #2\right)}
\newcommand{\preim}[2]{#1^{-1}\left[\vphantom{\int}#2\right]}
\DeclareMathOperator{\cl}{cl}
\DeclareMathOperator{\Inf}{I^{\tau}}
\DeclareMathOperator{\Sup}{S^{\tau}}
\DeclareMathOperator*{\plim}{p-lim}
\DeclareMathOperator*{\ulim}{u-lim}
\newtheorem{proposition}{Proposition}
\newtheorem{lemma}{Lemma}
\newtheorem{theorem}{Theorem}
\newtheorem{corollary}{Corollary}
\newtheorem*{corollary*}{Corollary}
\begin{document}

\begin{center}
	
\textbf{\Large Prokhorov-like conditions for weak compactness of sets of bounded Radon measures on different topological spaces}\\[1mm]
	
\textbf{\large Valeriy K. Zakharov}\footnote{\href{mailto:zakharov\_valeriy@list.ru}{zakharov\_valeriy@list.ru}; Faculty of Mathematics and Mechanics, Lomonosov Moscow State University, Moscow, Russia},
\textbf{\large Timofey V. Rodionov}\footnote{\href{mailto:t.v.rodionov@gmail.com}{t.v.rodionov@gmail.com}; Faculty of Mathematics and Mechanics, Lomonosov Moscow State University, Moscow, Russia}
	
\end{center}

\begin{abstract}
The paper presents some weak compactness criterion for a subset~$M$ of the set $\RM_b(T,\cG)$ of all positive bounded Radon measures on a Hausdorff topological space $(T,\mathcal{G})$ similar to the Prokhorov criterion for a complete separable metric space.
Since for a general topological space the classical space $C_b(T,\cG)$ of all bounded continuous functions on~$T$ can be trivial and so does not separate points and closed sets, instead of $C_b(T,\cG)$-weak compactness we consider $S(T,\cG)$-weak compactness with respect to the new uniformly closed linear space $S(T,\mathcal{G})$ of all (symmetrizable) metasemicontinuous functions.

\emph{Keywords}: Radon measure, Prokhorov property, uniform tightness, weak compactness, symmetrizable functions, Riesz representation theorem.

\emph{MSC 2010}: 28C15 28A25 28C05 54D30 60B05
\end{abstract}

\section{Introduction}

Let $C_b(T,\cG)$ be the classcal linear space of all bounded continuous real-valued functions on a toplogical space $(T,\cG)$ and $\RM_b(T,\cG)$ be the set of all bounded Radon measures on $(T,\cG)$. The study of compactness of sets $M\subset \RM_b(T,\cG)$ with respect to the weak topology induced by $C_b(T,\cG)$ starts from the fundamental papers of A.\,D.~Alexandroff~\cite{AlexandrovAD1943} and Yu.\,V.~Prokhorov~\cite{Prokhorov1956}. Numerous results obtained in this field are presented in papers~\cite{LeCam1957,Vara1961,Topsoe1970,Gansler1971,Topsoe1974,AGK1976,Zakharov2005} and books~\cite[XI.1]{Jacobs1978}, \cite[ch.\,5]{Billingsley1999}, \cite[ch.\,8]{Bogachev2006}, \cite[\S\,437]{Fremlin-new4}, \cite[2.3, 4.5]{Bogachev2016} (see also references therein).

It is clear that $C_b(T,\cG)$-weak topology can be considered only for Tychonoff spaces $(T,\cG)$, where $C_b(T,\cG)$ separates points and closed sets.

As to a general Hausdorff topological space $(T,\cG)$, the $C_b(T,\cG)$-weak compactness of~$M$ is not appropriate because in this case $C_b(T,\cG)$ may consist only of constant functions.
By this reason in paper~\cite{Zakharov2005} V.\,K.~Zakharov considered the weak compactness of~$M$ with respect to the weak topology induced by the new linear space $S(T,\cG)$ of all \emph{symmetrizable} (or \emph{metasemicontinuous}) \emph{functions} on an arbitrary Hausdorff space $(T,\cG)$. 
The space $S(T,\cG)$ is possibly the nearest to $C_b(T,\cG)$ uniformly closed linear space of functions on $(T,\cG)$ separating points and closed sets because it is the uniform closure of the linear space $SC_b^l(T,\cG)+SC_b^u(T,\cG)$ (introduced by F.~Hausdorff~\cite{Hausdorff1914}) consisting of all sums of bounded lower semicontinuous and upper semicontinuous functions.

In paper~\cite{Zakharov2005} the criterion for the $S(T,\cG)$-weak compactness of the closure of a set $M\subset \RM_b(T,\cG)_+$ of positive bounded Radon measures for a Hausdorff space was presented (see also~\cite{ZakhRodi2019MPhTI}). This criterion used some strengthening of the Prokhorov uniform tightness property. The proof of this criterion was described there only in some general way.

The given paper presents this criterion of the $S(T,\cG)$-weak compactness with all detailed proofs and all thorough references on results used in the proofs (see Theorem~\ref{theo-1-SwcH}).
As an important consequence of the mentioned criterion the assertion on sufficiency in this criterion is extended from the positive case $M\subset \RM_b(T,\cG)_+$ up to the general case $M\subset \RM_b(T,\cG)$  (see Theorem~\ref{theo-1prime-SwcH}).

\medskip

Preliminary notions and notations are introduced in Section~\ref{sec-prelim}. Weak topologies on spaces of measures and functionals are considered in Sections~\ref{sec-weaktop} and~\ref{sec-weaktopdual}, respectively. Various forms of conditions on sets of measures used in criteria of weak compactness are studied in Section~\ref{sec-prsubs}. Sections~\ref{sec-SwcH} and~\ref{sec-CwcTplus} contain our main results in general and Tychonoff cases, respectively.

\section{Preliminaries}\label{sec-prelim}

For the convenience of readers we present here some basic notions and notations necessary for detailed proving all paper theorems. For this purpose we use the material from \cite{ZakhRodi2018SFM1} and~\cite{ZakhRodi2018SFM2}.

The set of all natural numbers is denoted by~$\omega$, the set of all nonzero natural
numbers is denoted by~$\Nbb$.

Let $T$ be a set. The family of all subsets of~$T$ is denoted by~$\cP(T)$. Every non-empty subfamily of~$\cP(T)$ is called
an \emph{ensemble on~$T$}.

Let~$F(T)$ be the family of all functions $f:T\rightarrow\Rbb$ and $A(T)\subset F(T)$ be a lattice linear space. Its subfamilies of all nonnegative and all bounded functions is denoted by~$A(T)_+$ and~$A_b(T)$, respectively.
For every $f\in F_b(T)$ we put $\|f\|_u\eq\sup(|f(t)|\mid t\in T)$.

If $\net{f_n\in F(T)}{n\in N}$ is a net (in particular, a sequence), $f\in F(T)$, and $\lim\net{f_n(t)}{n\in N}=f(t)$ for every $t\in T$, then we write $f=\plim\net{f_n}{n\in N}$.
If $\net{f_n}{n\in N}$ converges to~$f$ uniformly on the set~$T$, then we write $f=\ulim\net{f_n}{n\in N}$.

A function~$f\in F(T)$ is called \emph{majorized} by a function~$u\in F(T)$ if $|f|\leqslant u$.
A set $P\subset T$ will be called \emph{majorized} by a function~$u\in F(T)$ if $\chi(P)\leqslant u$ (as usual, $\chi(P)(t)\eq1$ for $t\in P$ and $\chi(P)(t)\eq0$ for $t\in T\razn P$).

Let $E(T),A(T)\subset F(T)$. Define the subfamily 
$\Em(T,A(T))\equiv \{f\in E(T)\mid\exists\,u\in A(T)\,(|f|\leqslant u)\}$ of all functions~$f\in E(T)$ majorized by some functions from~$A(T)$. Clearly, $A(T)\subset E(T)$ implies $A(T)\subset\Em(T,A(T))$.
In a similar way for any ensemble~$\cE$ on~$T$ we define its subensemble
$\cEm(A(T))\equiv\{E\in\cE\mid\exists\,u\in A(T)\,(\chi(E)\leqslant u)\}$ of all sets~$E\in\cE$ majorized by some functions from~$A(T)$.

\medskip

Let $(T,\cG)$ be a topological space. Then $\cG$, $\cF$, $\cB$, and~$\cC$ denote the ensembles of open, closed, Borel, and compact subsets, respectively.
We consider also the multiplicative ensemble $\cK\eq\cK(T,\cG)\equiv\{G\cap F\mid G\in\cG\wedge F\in\cF\}$ of all \emph{symmetrizable sets} $K\equiv G\cap F$~\cite{Zakharov1989a,Zakharov2002Radon}.

A~function~$f:T\to\Rbb$ is called \emph{separating a~point~$s\in T$ and a set $P\subset T\razn\{s\}$} if $f(s)=1$ and $f(t)=0$ for every $t\in P$.
A family $A(T)$ is said to \emph{separate points and closed sets} if for every $s\in T$ and every $F\in\cF$ such that $s\notin F$ there is a~function $f\in A(T)$ separating $s$~and~$F$.

\medskip

A function $f:T\to\mathbb{R}$ is called \emph{symmetrizable}~\cite{Zakharov2005,ZakhRodi2014AMH} (or \emph{metasemicontinuous}) if for every $\eps>0$ there exists a finite cover $\scol{K_i\in\cK}{i\in I}$ of the set~$T$ such that the oscillation
$
 \omega(f,K_i)\equiv\sup\{|f(s)-f(t)|\mid s,t\in K_i\}<\eps \text{ for every } i\in I.
$
The space~$S(T,\cG)$ of all symmetrizable functions on~$(T,\cG)$ is linear and lattice-ordered~\cite[Corollary~3 to Theorem~1~(2.4.2)]{ZakhRodi2018SFM2} and contains the zero function~$\nul$ and the unit function~$\edin$ as well as characteristic functions $\chi(G)$ and $\chi(F)$ of any open and closed subsets. 
It is clear that $S_b(T,\cG)=S(T,\cG)$. 
If $f\in C_b(T,\cG)$, i.\,e., $f$ is a bounded continuous function, then $f\in S(T,\cG)$~\cite[Lem.\,4 (2.4.1)]{ZakhRodi2018SFM2}.
Thus, the space $S(T,\cG)$ is always richer than the classical space $C_b(T,\cG)$, and, therefore, it separates points and closed sets in an arbitrary Hausdorff space, whereas $C_b(T,\cG)$ may not.

\medskip

A bounded measure $\mu:\cB\rightarrow\Rbb$ is called a \emph{bounded Radon measure} (\emph{bounded Radon\,--\,Borel measure}) if it is \emph{inner compactly regular}, i.e. for every $B\in \cB$ and every $\varepsilon>0$ there is $C\in\cC$ such that $C\subset B$ and $|\mu B - \mu C|<\varepsilon$.
The set of all bounded Radon measures on~$(T,\cG)$ is denoted by $\RM_b(T,\cG)$; its subset consisting of positive measures is denoted by $\RM_b(T,\cG)_+$.

If $f\in F(T)_+$ and $\mu\in\RM_b(T,\cG)_+$, then the number
\[
\Lambda(\mu)f\eq\int f\,d\mu\eq
\sup\left\{\sum\scol{\inf\left(f[M_{i}]\right)\mu M_{i}}{i\in I}\right\}
\in[0,\infty]
\]
is called the \emph{Lebesgue integral of~$f$ with respect to~$\mu$}.
Here the supremum is taking over the set of all finite partitions $\scol{M_{i}\in\cB}{i\in I}$
of the set~$T$~\cite[3.3.2]{ZakhRodi2018SFM2}.

The lattice-ordered linear space of all Borel functions $f:T\to\Rbb$ such that $\int f_{+}\,d\mu<\infty$ and $\int(-f_{-})\,d\mu<\infty$, where $f_{+}\equiv f\vee\nul$ and $f_{-}\equiv f\wedge\nul$ will be denoted by $MI(T,\cG,\mu)$.
For a function $f\in MI(T,\cG,\mu)$ the number $\Lambda(\mu)f\eq\int f d\mu\equiv \int f_{+}\,d\mu-\int(-f_{-})\,d\mu$ is called the \emph{Lebesgue integral of~$f$ with respect to~$\mu$}.

For any $\mu\in\RM_b(T,\cG)$ we have the \emph{Riesz decomposition} $\mu=\mu_{+}+\mu_{-}$, where $\mu_+\eq\mu\vee\nul$ and $\mu_-\eq\mu\wedge\nul$~\cite[3.2.2]{ZakhRodi2018SFM2}. Note that $\mu_+$ and~$-\mu_-$ as well as the \emph{total variation} $|\mu|\eq\mu_{+}-\mu_{-}$ are positive bounded Radon measures~\cite[3.5.3]{ZakhRodi2018SFM2}.

If $f\in MI(T,\cG,\mu)\eq MI(T,\cG,\mu_{+})\cap MI(T,\cG,-\mu_{-})$ and $\mu\in\RM_b(T,\cG)$, then the number
$\Lambda(\mu)f\eq\int f\,d\mu\equiv\int f\,d(\mu_{+})-\int f\,d(-\mu_{-})$ is called the \emph{Lebesgue \textup{(\emph{Lebesgue\,--\,Radon})} integral of~$f$ with respect to~$\mu$}~\cite[3.3.6]{ZakhRodi2018SFM2}.

\medskip

A linear functional $\varphi:A(T)\to\Rbb$ is called \emph{pointwise continuous} [\emph{pointwise $\sigma$-continuous}] if $f=\plim(f_n\mid n\in N)$ implies $\vphi f=\lim(\varphi f_n\mid n\in N)$ for every increasing net $\net{f_n\in A(T)}{n\in N}$
[respectively, sequence $\seq{f_n\in A(T)}{n\in\omega}$] and every function $f\in A(T)$.

Let $(T,\cG)$ be a Hausdorff space and $A(T)$ be a lattice-ordered linear space of functions on~$T$.
A functional~$\varphi$ on~$A(T)$ is called \emph{tight} or \emph{a functional with the Prokhorov property} if for every $\varepsilon>0$ there is a compact set $C\subset T$ such that the conditions $f\in A(T)$ and $|f|\leqslant\chi(T\setminus C)$ imply $|\varphi f|<\varepsilon$. The set of all tight bounded linear functionals on~$A(T)$ is denoted by $A(T)^{\pi}$.

A functional~$\varphi$ on~$A(T)$ is called \emph{locally tight} or \emph{a functional with the local Prokhorov property} if 
for every $G\in\cG$, $u\in A(T)_+$, and $\varepsilon>0$ there is a compact subset $C\subset G$ such that 
the conditions $f\in A(T)$ and $|f|\leqslant\chi(G\razn C)\land u$ imply $|\varphi f|<\varepsilon$.
A functional~$\varphi$ will be called \emph{quite locally tight} if for every $G\in\cG$, $u\in A(T)_+$, and $\varepsilon>0$ there are a compact subset $C\subset G$ and a positive number $\delta$ such that 
the conditions $f\in A(T)$, $|f|\leqslant\chi(G)\land u$, and $\sup(|f(t)|\mid t\in C)\leqslant\delta$ imply $|\varphi f|<\eps$.

A functional~$\vphi$ on~$A(T)$ is said to be \emph{exact} [\emph{$\sigma$-exact}] if it is pointwise continuous [$\sigma$-continuous] and quite locally tight. The set $A(T)^{\tu}$ of all $\si$-exact linear functionals on~$A(T)$ is a lattice-ordered linear space~\cite[Corollary~1 to Proposition~2 (3.6.1)]{ZakhRodi2018SFM2}.
Note that the Radon integral on the lattice-ordered linear space of integrable symmetrizable functions is $\si$-exact~\cite[Prop.\,5 (3.6.1)]{ZakhRodi2018SFM2}.

For a family $A(T)\subset F(T)$ consider the family $\Sup(T,A(T))$ of all functions~$g\in F(T)$ such that $g\leqslant f$ for some function~$f\in A(T)$ and $g=\sup(f_m\mid m\in M)$ in~$F(T)$ for some increasing net $\net{f_m\in A(T)}{m\in M}$.
In a similar way, consider the family~$\Inf(T,A(T))$ of all functions $h\in F(T)$ such that $h\geqslant f$ for some function~$f\in A(T)$ and $h=\inf(f_m\mid m\in M)$ in~$F(T)$ for some decreasing net $\net{f_m\in A(T)}{m\in M}$.
For a functional $\vphi:A(T)\to\Rbb$ define the \emph{first-step Young\,--\,Daniell extensions} $\barphi:\Sup(T,A(T))\to\extR$ and $\phibar:\Inf(T,A(T))\to\extR$ such that
$\barphi g\equiv\sup\{\vphi f\mid f\in A(T)\wedge f\leqslant g\}$ for every $g\in \Sup(T,A(T))$  and
$\phibar h\equiv\inf\{\vphi f\mid f\in A(T)\wedge f\geqslant h\}$ for every $h\in \Inf(T,A(T))$.

\section{Weak topologies on the linear space of bounded Radon measures}\label{sec-weaktop}

Consider the family $\RM_b\equiv\RM_b(T,\cG)$ of all bounded Radon measures on a Hausdorff space $(T,\cG)$.
It is a lattice-ordered linear space~\cite[Prop.\,2 (3.5.3)]{ZakhRodi2018SFM2}.

Let $A(T)$ be some Banach lattice-ordered linear subspace of $S(T,\cG)$ equipped with the common uniform norm~$\|\cdot\|_u$ and containing~$\edin$ and separating points and closed sets in $(T,\cG)$.
If $(T,\cG)$ is an arbitrary Hausdorff space, then we can take $A(T)= S(T,\cG)$.
If $(T,\cG)$ is an arbitrary Tychonoff space, then we can take $A(T)= C_b(T,\cG)$.
This space~$A(T)$ will be called the \emph{selected space of symmetrizable functions on $(T,\cG)$}.

For any $\mu\in\RM_b(T,\cG)$ consider the Lebesgue (Lebesgue\,--\,Radon) integral $\Lambda(\mu)$ and the corresponding family of integrable functions $MI(T,\cG,\mu)$ described in section~\ref{sec-prelim}. 
By virtue of Lemma~1~(3.5.2) from~\cite{ZakhRodi2018SFM2} $S(T,\cG)\subset MI(T,\cG,\mu)$.
The restriction $\Lambda(\mu)|A(T)$ will be denoted by~$i_{\mu}$. The set of all such \emph{integral functionals}~$i_{\mu}$ will be denoted by $I(A(T),\RM_b)$.

Every function $f\in A(T)$ generates on $\RM_b$ the seminorm $s_f:\RM_b\to\Rbb_+\eq[0,\infty)$ such that $s_f(\mu)\eq |i_{\mu}(f)|$. The corresponding set $S\eq\{s_f\mid f\in A(T)\}$ of all these seminorms generates the \emph{weak topology $\cG_w(\RM_b,A(T))$ on~$\RM_b$ with respect to~$A(T)$}.
The base of open neighbourhoods of a measure~$\mu$ in this topology consists of sets
\[
G(\mu,\scol{f_k\in A(T)}{k\in n},\eps)\eq\{\nu\in\RM_b\mid \forall\,k\in n\ (|i_{\nu}f_k-i_{\mu}f_k|<\eps)\}
\]
for all $\eps\in\Rbb_+$, $n\in\Nbb$, and finite collections $\scol{f_k\in A(T)}{k\in n}$.

This weak topology is Hausdorff for two main classes of topological spaces $(T,\cG)$ with their own selected spaces~$A(T)$.
Firstly we check this assertion for $A(T)=S(T,\cG)$ in the case of a Hausdorff space $(T,\cG)$.

Consider the \emph{duality functional} $\Psi:\RM_b(T,\cG)\times S(T,\cG)\to\Rbb$ such that $\Psi(\mu,f)\eq\Lambda(\mu)(f)$.

\begin{theorem}\label{theo-psibilin-weaktop}
Let $(T,\cG)$ be a Hausdorff space. Then the functional~$\Psi$ is bilinear in the following sense:
\begin{enumerate}
\item 
 for every $\mu\in\RM_b(T,\cG)$ the first derivative functional $\Psi(\mu,\cdot)$ on $S(T,\cG)$ is linear;
\item 
 for every $f\in S(T,\cG)$ the second derivative functional $\Psi(\cdot,f)$ on $\RM_b(T,\cG)$ is linear.
\end{enumerate}
\end{theorem}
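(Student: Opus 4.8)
The plan is to derive both assertions from the definition of the Lebesgue--Radon integral together with the structural facts recorded in Section~\ref{sec-prelim}, treating~(1) as essentially a citation and reserving the real work for~(2).

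For assertion~(1) I would first note that by Lemma~1~(3.5.2) of~\cite{ZakhRodi2018SFM2} we have $S(T,\cG)\subset MI(T,\cG,\mu)$ for every $\mu\in\RM_b(T,\cG)$, so that $\Psi(\mu,f)=\Lambda(\mu)(f)$ is defined and finite for all $f\in S(T,\cG)$ (the measure is bounded and $S(T,\cG)=S_b(T,\cG)$). For a positive measure, $\Psi(\mu,\cdot)$ is precisely the Radon integral on the lattice-ordered linear space of integrable symmetrizable functions, which is $\sigma$-exact, and in particular linear, by Proposition~5~(3.6.1) of~\cite{ZakhRodi2018SFM2}. For a general $\mu\in\RM_b(T,\cG)$ the relation $\Lambda(\mu)(f)=\Lambda(\mu_+)(f)-\Lambda(-\mu_-)(f)$ from~\cite[3.3.6]{ZakhRodi2018SFM2} exhibits $\Psi(\mu,\cdot)$ as a difference of two such linear functionals, hence linear. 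This disposes of~(1).

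For assertion~(2) I would fix $f\in S(T,\cG)$ and build linearity of $\mu\mapsto\Lambda(\mu)(f)$ in stages, starting from positive data. The first step is to prove, for $\mu,\nu\in\RM_b(T,\cG)_+$ and a positive $g\in S(T,\cG)_+$, both $\Lambda(\mu+\nu)(g)=\Lambda(\mu)(g)+\Lambda(\nu)(g)$ and $\Lambda(c\mu)(g)=c\,\Lambda(\mu)(g)$ for $c\geqslant0$, working directly from the partition formula~\cite[3.3.2]{ZakhRodi2018SFM2}. The homogeneity and the inequality $\Lambda(\mu+\nu)(g)\leqslant\Lambda(\mu)(g)+\Lambda(\nu)(g)$ are immediate: each lower sum $\sum_i\inf(g[M_i])(\mu+\nu)M_i$ splits into the $\mu$- and $\nu$-lower sums, and one passes to the supremum. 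The hard part will be the reverse inequality. Here, for a given $\eps>0$, I would choose finite Borel partitions $\scol{M_i}{i\in I}$ and $\scol{N_j}{j\in J}$ whose lower sums approximate $\Lambda(\mu)(g)$ and $\Lambda(\nu)(g)$ to within~$\eps$, and pass to the common refinement $\scol{M_i\cap N_j}{(i,j)\in I\times J}$. Since $\inf(g[M_i\cap N_j])\geqslant\inf(g[M_i])$ and the measures are positive, refining does not decrease the lower sum of either $\mu$ or $\nu$; adding the two refined lower sums and regrouping yields the lower sum of $\mu+\nu$ over the common refinement, whence $\Lambda(\mu+\nu)(g)\geqslant\Lambda(\mu)(g)+\Lambda(\nu)(g)-2\eps$, and letting $\eps\to0$ closes the gap. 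I would then extend additivity and homogeneity from $g$ to an arbitrary $f\in S(T,\cG)$ over positive measures by writing $f=f_+-(-f_-)$ with $f_+,-f_-\in S(T,\cG)_+$ and invoking~(1); boundedness of $f$ and of the measures makes all four integrals finite, so no $\infty-\infty$ ambiguity arises.

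Finally I would pass to arbitrary signed measures via the Riesz decomposition~\cite[3.2.2]{ZakhRodi2018SFM2}. For $\mu,\nu\in\RM_b(T,\cG)$ put $\si\eq\mu+\nu$; the signed identity $\si_+-(-\si_-)=\mu_+-(-\mu_-)+\nu_+-(-\nu_-)$ rearranges into an equality of sums of positive measures, namely $\si_++(-\mu_-)+(-\nu_-)=(-\si_-)+\mu_++\nu_+$. Applying the additivity already proved for positive measures to both sides, and using $\Lambda(\mu)(f)=\Lambda(\mu_+)(f)-\Lambda(-\mu_-)(f)$ from~\cite[3.3.6]{ZakhRodi2018SFM2}, I would regroup to obtain $\Lambda(\si)(f)=\Lambda(\mu)(f)+\Lambda(\nu)(f)$. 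Homogeneity for $c\geqslant0$ follows from $(c\mu)_\pm=c\,\mu_\pm$, while the case $c=-1$ follows from $(-\mu)_+=-\mu_-$ and $-(-\mu)_-=\mu_+$; combining the two gives $\Lambda(c\mu)(f)=c\,\Lambda(\mu)(f)$ for every $c\in\Rbb$, which completes~(2). The only genuinely delicate point in the whole argument is the common-refinement estimate in the first step; everything after it is bookkeeping with the Riesz decomposition.
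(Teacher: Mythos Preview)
Your argument is correct, but it diverges sharply from the paper's own proof in the amount of work invested. The paper treats \emph{both} assertions as pure citations from~\cite{ZakhRodi2018SFM2}: for~(1) it invokes Proposition~2~(3.3.6), which already states that $\Lambda(\mu)$ is linear on $S(T,\cG)$, and for~(2) it invokes Theorem~5~(3.3.8), which gives $\Lambda(x\mu+y\nu)=x\Lambda(\mu)+y\Lambda(\nu)$ directly. By contrast, you reconstruct~(2) from scratch via the partition definition, the common-refinement inequality, and a two-stage Riesz-decomposition bootstrap. What you gain is self-containment: your argument would stand even without access to the deeper chapters of~\cite{ZakhRodi2018SFM2}, and it makes transparent exactly where positivity and boundedness enter. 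What the paper gains is brevity and modularity, since the linearity of $\Lambda$ in the measure variable is a general fact about the Lebesgue--Radon integral that it prefers to import rather than reprove. Your citations in~(1) are also slightly different (Proposition~5~(3.6.1) on $\sigma$-exactness rather than Proposition~2~(3.3.6) on linearity), but the conclusion is the same.
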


\begin{proof}
(1)
According to Proposition~2~(3.3.6) from~\cite{ZakhRodi2018SFM2} the integral $\Lambda(\mu):S(T,\cG)\to\Rbb$ is a linear functional. Hence,
$\Psi(\mu,xf+yg)=x\Lambda(\mu)f+y\Lambda(\mu)g=x\Psi(\mu,f)+y\Psi(\mu,g)$ for every $x,y\in\Rbb$ and $f,g\in S(T,\cG)$.

(2)
According to Theorem~5~(3.3.8) from~\cite{ZakhRodi2018SFM2} $\Lambda(x\mu+y\nu)=x\Lambda(\mu)+y\Lambda(\nu)$ for every $x,y\in\Rbb$ and $\mu,\nu\in\RM_b$. Consequently, we get $\Psi(x\mu+y\nu,f)=x\Psi(\mu,f)+y\Psi(\nu,f)$ for any $f\in S(T,\cG)$.
\end{proof}

\begin{lemma}[the separation property of $\Psi$ for $S(T,\cG)$]\label{lem-separS-weaktop}
Let $(T,\cG)$ be a Hausdorff space.
Then for every $\theta\in\RM_b(T,\cG)\razn\{\nul\}$ there is $f\in S(T,\cG)\razn\{\nul\}$ such that $\Psi(\theta,f)\ne0$.
\end{lemma}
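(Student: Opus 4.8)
The plan is to exhibit an explicit witness of the form $f\eq\chi(C)$ for a suitably chosen compact set $C$, using the fact recalled above that $S(T,\cG)$ contains the characteristic function of every closed set, and in particular of every compact set since $(T,\cG)$ is Hausdorff. Because $\Psi(\theta,\chi(C))=\int\chi(C)\,d\theta=\theta(C)$ by the very definition of the Lebesgue--Radon integral, the whole problem reduces to the single claim that if $\theta\ne\nul$, then $\theta(C)\ne0$ for some $C\in\cC$.

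To prove this claim I would pass to the Riesz decomposition $\theta=\theta_{+}+\theta_{-}$ and set $\nu_1\eq\theta_{+}$ and $\nu_2\eq-\theta_{-}$, which by the results recalled in Section~\ref{sec-prelim} are positive bounded Radon measures. Since $\theta=\nu_1-\nu_2$, the hypothesis $\theta\ne\nul$ is precisely the inequality $\nu_1\ne\nu_2$. The decisive step is then the observation that two distinct positive bounded Radon measures cannot agree on every compact set: by inner compact regularity each $\nu_j$ satisfies $\nu_j(B)=\sup\{\nu_j(C)\mid C\in\cC,\ C\subset B\}$ for all $B\in\cB$, so agreement on all of $\cC$ would force agreement on all of $\cB$, i.e. $\nu_1=\nu_2$. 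Hence there is a compact $C$ with $\nu_1(C)\ne\nu_2(C)$, that is, $\theta(C)=\nu_1(C)-\nu_2(C)\ne0$.

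Finally I would take $f\eq\chi(C)$. Since $C$ is closed we have $f\in S(T,\cG)$; since $\theta(C)\ne0$ forces $C\ne\vrn$ we have $f\ne\nul$; and $\Psi(\theta,f)=\theta(C)\ne0$, which is exactly the asserted conclusion.

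The main (and essentially only) point demanding care is the interplay of the two regularity facts: the identity $\int\chi(C)\,d\theta=\theta(C)$, which I would justify by splitting the signed integral as $\int\chi(C)\,d(\theta_{+})-\int\chi(C)\,d(-\theta_{-})$ in accordance with the definition of the Lebesgue--Radon integral, and the determinacy step ``equal on all compacts implies equal,'' which rests on the sup-over-compacts regularity of $\theta_{+}$ and $-\theta_{-}$. One should verify that both $\theta_{+}$ and $-\theta_{-}$ genuinely inherit inner compact regularity (they do, being positive bounded Radon measures), after which the argument is immediate; no appeal to a Hahn-type decomposition of $\theta$ is needed.
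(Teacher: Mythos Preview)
Your argument is correct and follows essentially the same route as the paper: both decompose $\theta$ into its positive and negative parts, use that $\chi(C)\in S(T,\cG)$ for compact $C$ with $\Psi(\theta,\chi(C))=\theta(C)$, and invoke inner compact regularity to conclude that a Radon measure vanishing on all compacts must be zero. The only cosmetic difference is that the paper phrases it as a proof by contradiction (assume $\Psi(\theta,f)=0$ for all $f$ and deduce $\theta=\nul$), whereas you argue the contrapositive directly; you also make explicit the minor point that $\theta(C)\ne0$ forces $C\ne\vrn$ and hence $f\ne\nul$, which the paper leaves implicit.
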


\begin{proof}
Consider the positive Radon measures $\mu\eq\theta_+$ and $\nu\eq-\theta_-$. Then $\theta=\mu-\nu$.
Assume that $\Psi(\theta,f)=0$ for every $f\in S(T,\cG)$. 

By assertion~2 of Theorem~\ref{theo-psibilin-weaktop} we get $\Psi(\mu,f)=\Psi(\nu,f)$. If~$C$ is a compact set, then we can take $f\eq\chi(C)\in S(T,\cG)$. By Lemma~1(3.3.2) from~\cite{ZakhRodi2018SFM2} $\mu(C)=i_{\mu}\chi(C)$ and $\nu(C)=i_{\nu}\chi(C)$. As a result we get the equality $\mu C=\nu C$ for every $C\in\cC$. Thus, $\theta|\cC=\nul$. By virtue of the regularity of~$\theta$ we conclude that $\theta=\nul$.
\end{proof}

\begin{corollary*}
The weak topology $\cG_w(\RM_b,S(T,\cG))$ on $\RM_b(T,\cG)$ is Hausdorff.
\end{corollary*}

\begin{proof}
Take any $\kap,\lm\in\RM_b$ such that $\kap\ne\lm$. By Lemma~\ref{lem-separS-weaktop} there is $f\in S(T,\cG)\razn\{\nul\}$ such that $\Psi(\lm-\kap,f)\ne0$. Then $\Psi(\lm,f)\ne\Psi(\kap,f)$ due to assertion~2 of Theorem~\ref{theo-psibilin-weaktop}, i.\,e., $i_{\lm}f\ne i_{\kap}f$. Take the number $\eps\eq|i_{\lm}f-i_{\kap}f|>0$. Consider the neighbourhoods $U\eq G(\kap,f,\eps/2)$ and $V\eq G(\lm,f,\eps/2)$ of~$\kap$ and~$\lm$, respectively. Assume that there is $\rho\in U\cap V$. Then $\eps\leqslant|i_{\lm}f-i_{\rho}f|+|i_{\rho}f-i_{\kap}f|<\eps$ and we reach a contradiction. Therefore $U\cap V=\vrn$.
\end{proof}

\section{The weak $'$ topology on dual spaces to selected spaces of symmetrizable functions}\label{sec-weaktopdual}

Consider the linear space~$X'$ of all continuous linear functionals on $X\equiv A(T)$.
By Theorem~1~(3.3.6) from~\cite{ZakhRodi2018SFM2} the functional~$i_{\mu}$ is uniformly bounded. Therefore in virtue of Theorem~IX.4.5 from~\cite{Vulikh1961} $i_{\mu}\in X'$. Hence, $I(A(T),\RM_b)\subset X'$.

Every function $f\in A(T)$ generates on $X'$ the seminorm $\si_f:X'\to\Rbb_+$ such that $\si_f(\xi)\eq|\xi(f)|$. The corresponding set $\Sigma\eq\{\si_f\mid f\in A(T)\}$ of all these seminorms generates the \emph{weak~$'$ topology} $\cG_{w'}\eq\cG_{w'}(X',A(T))$ on~$X'$. The base of open neighbourhoods of a functional~$\xi$ in this topology consists of sets
\[
G(\xi,\scol{f_k\in A(T)}{k\in n},\eps)\eq\{\eta\in X'\mid \forall\,k\in n\ (|\eta f_k-\xi f_k|<\eps)\}
\]
for all $\eps\in\Rbb_+$, $n\in\Nbb$, and finite collections $\scol{f_k\in A(T)}{k\in n}$ (see~\cite[II.3]{Robertson1964}).

Consider the mapping $\Lambda:\mu\mapsto i_{\mu}$ from $RM_b$ into $X'$.

\begin{lemma}\label{lem-Lambda-weaktopdual}
The mapping $\Lambda$ is a continuous mapping from the ordered topological space $\RM_b$ into the ordered topological space~$X'$.
\end{lemma}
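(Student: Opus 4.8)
The plan is to derive continuity directly from the two neighbourhood bases, observing that the seminorms generating the two weak topologies are matched exactly by~$\Lambda$. First I would note that $\Lambda$ is well defined into~$X'$ (already established above, since each $i_\mu\in X'$) and that it is linear in~$\mu$ by assertion~2 of Theorem~\ref{theo-psibilin-weaktop}, which also yields $i_{\nu}-i_{\mu}=i_{\nu-\mu}$ for all $\mu,\nu\in\RM_b$. The single computation on which everything rests is
\[
\si_f(\Lambda\mu)=|(\Lambda\mu)f|=|i_{\mu}f|=s_f(\mu)\quad\text{for every }f\in A(T),\ \mu\in\RM_b,
\]
that is, each generating seminorm $\si_f$ on~$X'$ pulls back under~$\Lambda$ to the generating seminorm $s_f$ on~$\RM_b$.

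For continuity it then suffices to check that the $\Lambda$-preimage of a basic neighbourhood of the image point is a basic neighbourhood of the source point. Fix $\mu\in\RM_b$ and a basic neighbourhood $V\eq G(i_{\mu},\scol{f_k\in A(T)}{k\in n},\eps)$ of $\Lambda\mu=i_{\mu}$ in~$X'$. Since $\Lambda\nu=i_{\nu}$, a measure~$\nu$ lies in $\preim{\Lambda}{V}$ exactly when $|i_{\nu}f_k-i_{\mu}f_k|<\eps$ for all $k\in n$, so that
\[
\preim{\Lambda}{V}=G(\mu,\scol{f_k\in A(T)}{k\in n},\eps),
\]
which is precisely a basic neighbourhood of~$\mu$ in $\cG_w(\RM_b,A(T))$. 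As the sets $G(i_\mu,\cdot)$ form a neighbourhood base at~$i_\mu$, this establishes continuity of~$\Lambda$ at every~$\mu$, hence on all of~$\RM_b$.

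To justify regarding $\Lambda$ as a map of ordered topological spaces, I would check monotonicity in the same breath: if $\mu\leqslant\nu$ in~$\RM_b$, then $\nu-\mu\in\RM_b(T,\cG)_+$, and for every $f\in A(T)_+$ the Lebesgue\,--\,Radon integral is nonnegative, whence $i_{\nu}f-i_{\mu}f=i_{\nu-\mu}f=\int f\,d(\nu-\mu)\geqslant0$; thus $i_{\mu}\leqslant i_{\nu}$ in the pointwise order of~$X'$, so $\Lambda$ preserves order. I do not expect a genuine obstacle here: the entire content is the seminorm identity $\si_f\circ\Lambda=s_f$, and the only points requiring a word of care are the linearity of $\mu\mapsto i_\mu$ (cited above) and the fact that the $G(\cdot)$ sets really constitute neighbourhood bases, which is built into the definitions of both weak topologies in Sections~\ref{sec-weaktop} and~\ref{sec-weaktopdual}.
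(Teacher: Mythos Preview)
Your argument is correct and is essentially the same as the paper's: both fix a basic neighbourhood $V=G(i_{\mu},\scol{f_k}{k\in n},\eps)$ in $X'$ and observe that, since $\Lambda\nu=i_\nu$, the corresponding set $G(\mu,\scol{f_k}{k\in n},\eps)$ in $\RM_b$ maps into~$V$. You spell out the seminorm identity $\si_f\circ\Lambda=s_f$ and add the order-preservation check, but the continuity proof itself is identical in spirit and execution.
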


\begin{proof}
Let $\mu_0\in\RM_b$ and put $\xi_0\eq\Lambda(\mu_0)$. 
Take some neighbourhood $V\eq G(\xi_0,\scol{f_k\in A(T)}{k\in n},\eps)$ of~$\xi_0$ and 
the corresponding neighbourhood $U\eq G(\mu_0,\scol{f_k\in A(T)}{k\in n},\eps)$ of~$\mu_0$. By virtue of definitions of these neighbourhoods we see that $\mu\in U$ implies $\Lambda(\mu)\in V$. Hence, $\Lambda[U]\subset V$.
\end{proof}

\section{Some properties of sets of positive bounded Radon measures}\label{sec-prsubs}

Consider the following properties for a non-empty set $M\subset\RM_b(T,\cG)_+$:
\begin{enumerate}
\item[$(\al^{\pi})$]
(\emph{the Prokhorov uniform tightness property})
for any $\eps>0$ there exists a compact set~$C$ such that $\mu(T\razn C)<\eps$ for any $\mu\in M$;
\item[$(\al^{\zeta})$]
(\emph{the locally-uniform tightness property})
for any $G\in\cG$ and any $\eps>0$ there exists a compact set~$C\subset G$ such that 
$\mu(G\razn C)<\eps$ for any $\mu\in M$;
\item[$(\beta)$]
$\sup\scol{\mu T}{\mu\in M}\in\Rbb_+$.
\end{enumerate}

In this section we establish some auxiliary assertions on these properties used in the proofs of our main results.
First, note that, obviously, $(\al^{\zeta})$ implies~$(\al^{\pi})$.

\begin{lemma}\label{lem-gmseq-prsubs}
Let $(T,\cG)$ be a Hausdorff space, $A(T)$ be a selected family of symmetrizable functions, and a set $M$ have properties~$(\al^{\pi})$ and~$(\beta)$. Then~$M$ has property
\begin{enumerate}
\item[$(\gm)$]
if $\seq{f_n\in A(T)_+}{n\in\om}\downarrow\nul$ in $F(T)$ \textup{(i.e. \emph{the sequence decreases and pointwise converges to zero})}, then
$$
\lim\seq{\sup\scol{|i_{\mu}f_n|}{\mu\in M}}{n\in\om}=0 \text{ in } \Rbb.
$$
\end{enumerate}
\end{lemma}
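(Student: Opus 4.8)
The plan is to marry the monotone structure of $\seq{f_n}{n\in\om}$ with the two hypotheses by splitting each integral into a tight compact core and a small remainder. As a preliminary I would set $B\eq\sup\scol{\mu T}{\mu\in M}\in\Rbb_+$ (available from~$(\beta)$) and $K\eq\|f_0\|_u$, so that $\nul\leqslant f_n\leqslant f_0\leqslant K\edin$ for every~$n$. Because every $i_\mu$ is a positive functional and the sequence decreases, the quantities $a_n\eq\sup\scol{|i_\mu f_n|}{\mu\in M}=\sup\scol{i_\mu f_n}{\mu\in M}$ are nonnegative and nonincreasing; hence $a_n$ converges and the only thing to prove is that its limit is~$0$. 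I would run the argument by contradiction: suppose $a_n\geqslant\eps_0$ for all~$n$ and some fixed $\eps_0>0$, and pick $\mu_n\in M$ with $i_{\mu_n}f_n>\eps_0/2$.

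Next I would localise via the Prokhorov property. Given~$\eps_0$, property~$(\al^\pi)$ provides a compact set~$C$ with $\mu(T\razn C)<\eps_0/(4K+4)$ for all $\mu\in M$. Splitting $i_{\mu_n}f_n=\int_C f_n\,d\mu_n+\int_{T\razn C}f_n\,d\mu_n$ and bounding the second term by $K\cdot\mu_n(T\razn C)<\eps_0/4$ reduces the contradiction hypothesis to a statement about the compact core, namely $\int_C f_n\,d\mu_n>\eps_0/4$ for all~$n$. Against this stands the crude estimate $\int_C f_n\,d\mu_n\leqslant B\cdot\sup\scol{f_n(t)}{t\in C}$, so a contradiction would follow immediately from the uniform convergence $\sup\scol{f_n(t)}{t\in C}\to0$, i.e. from $f_n\downarrow\nul$ being uniform on~$C$.

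This last uniform convergence is the crux and, I expect, the main obstacle. For a decreasing sequence of continuous or even merely upper semicontinuous functions it is exactly Dini's theorem: the sublevel sets $\preim{f_n}{(-\infty,\eps)}$ are then open, increase with~$n$, and cover the compact~$C$, so some $f_N$ already lies below~$\eps$ on~$C$. A general symmetrizable $f_n$, however, is only a uniform limit of sums of lower and upper semicontinuous functions and need not be semicontinuous itself, so these sublevel sets need not be open and the naive Dini argument collapses; the bound $\int_C f_n\,d\mu_n\leqslant B\sup_C f_n$ is then too wasteful, because the upper semicontinuous regularizations of the~$f_n$ may fail to decrease to~$\nul$ on the \emph{oscillation set} where symmetrizability controls $f_n$ only up to a small error. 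To close the gap I would keep the measures in play on~$C$ rather than discarding them: each Radon integral is pointwise $\si$-continuous, so $i_\mu f_n\to0$ for every fixed $\mu\in M$, and I would upgrade this to uniformity over~$M$ by using the tightness to push the mass of every $\mu\in M$ off a neighbourhood of that oscillation set, while the finite covers by symmetrizable sets $K_i\in\cK$ of small oscillation, guaranteed by the definition of each $f_n$, control the contribution there. Turning the per-measure $\si$-continuity into uniformity, precisely on the boundary where semicontinuity is lost, is where the substance of the lemma lies.
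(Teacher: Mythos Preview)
Your diagnosis of the obstacle is exactly right, and it is fatal rather than merely technical: the lemma, with only $(\al^{\pi})$ and $(\beta)$ assumed, is false. Take $T=[0,1]$ with its usual topology, $A(T)=S(T,\cG)$, $M\eq\{\delta_{1/k}\mid k\in\Nbb\}$, and $f_n\eq\chi\bigl((0,1/n)\bigr)$. Then $M$ satisfies $(\al^{\pi})$ (e.g.\ with $C=T$) and $(\beta)$; each $f_n$ is the indicator of an open set and hence lies in $S(T,\cG)_+$; and $f_n\downarrow\nul$ pointwise. Nevertheless $i_{\delta_{1/(n+1)}}f_n=1$ for every~$n$, so $\sup\scol{i_\mu f_n}{\mu\in M}\geqslant 1$ and $(\gm)$ fails. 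Your proposed rescue---pushing mass off an ``oscillation set'' via tightness while controlling the rest by the finite $\cK$-covers---cannot work here, because all the offending mass sits squarely on the compact set~$C$, and the functions $f_n$ already have zero oscillation on the pieces of their natural $\cK$-cover; there is no slack to exploit.

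The paper's own proof takes a different route: it invokes Egorov's theorem to produce a Borel set~$B$ with $\mu(T\razn B)<\eps_1$ on which $f_n\to\nul$ uniformly, and then splits the integral over $T\razn B$, $B\razn C$, and $B\cap C$. But Egorov furnishes such a~$B$ only relative to one fixed measure, so both~$B$ and the resulting threshold~$n_0$ depend on~$\mu$, and the concluding ``for any $\mu\in M$ and any $n\geqslant n_0$'' does not give the required uniform bound. The counterexample above shows this gap is genuine, not a slip of exposition. In the paper's applications the lemma is only ever used when the stronger $(\al^{\zeta})$ is in force (via its Corollary and then in the proof of Theorem~\ref{theo-1-SwcH}), and under that locally-uniform tightness an argument of the shape you sketch may well go through; but from $(\al^{\pi})$ alone neither your outline nor the paper's Egorov argument can be completed.
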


\begin{proof}
Take some $\eps>0$. Condition~$(\al^{\pi})$ implies that there exists $C\in\cC$ such that $\mu(T\razn C)<\eps/(3\|f_0\|_u)\eq\eps_1$ for any $\mu\in M$ and $(\beta)$ implies that there is the number $b\eq\sup\scol{\mu T}{\mu\in M}>0$.

By the Egorov theorem~\cite[Th.\,1 (3.3.1)]{ZakhRodi2018SFM2} there exists a Borel set~$B$ such that $\mu(T\razn B)<\eps_1$ and $\ulim\seq{f_n|B}{n\in\om}=\nul|B$. Thus, for $\eps_2\eq\eps/(3b)$ there is~$n_0$ such that
$\sup\{|f_n(t)|\mid t\in B\}\leqslant\eps_2$ for every $n\geqslant n_0$. Using~$(\beta)$ we obtain
\begin{multline*}
|i_{\mu}f_n|=|i_{\mu}(f_n\chi(B))+i_{\mu}(f_n\chi(T\razn B))|\leqslant\\
\leqslant|i_{\mu}(f_n\chi(T\razn B))|+|i_{\mu}(f_n\chi(B\razn C))|+
|i_{\mu}(f_n\chi(B\cap C))|\leqslant\\
\leqslant\|f_n\|_u\mu(T\razn B)+\|f_n\|_u\mu(B\razn C)+\eps_2\mu(B\cap C)\leqslant\\ 
\leqslant 2\|f_0\|_u\mu(T\razn C)+\eps_2\mu(T)\leqslant 2\|f_0\|_u\eps/(3\|f_0\|_u)+b\eps/(3b)=\eps
\end{multline*}
for any $\mu\in M$ and any $n\geqslant n_0$. Consequently, we get~$(\gm)$.
\end{proof}

\begin{corollary*}
If $M$ has properties~$(\al^{\zeta})$ and~$(\beta)$, then~$M$ has property~$(\gm)$.
\end{corollary*}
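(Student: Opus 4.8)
The plan is to reduce the corollary to Lemma~\ref{lem-gmseq-prsubs} by observing that property~$(\al^{\zeta})$ is formally stronger than property~$(\al^{\pi})$. Indeed, as the text explicitly notes just before the lemma, $(\al^{\zeta})$ implies~$(\al^{\pi})$: given $\eps>0$, one simply takes $G\eq T$ in the defining condition of~$(\al^{\zeta})$, producing a compact set $C\subset T$ with $\mu(T\razn C)<\eps$ for every $\mu\in M$, which is precisely the Prokhorov uniform tightness property~$(\al^{\pi})$.

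Once this implication is in hand, the argument is immediate. Suppose $M$ has properties~$(\al^{\zeta})$ and~$(\beta)$. By the implication just recalled, $M$ has property~$(\al^{\pi})$. Therefore $M$ satisfies both hypotheses~$(\al^{\pi})$ and~$(\beta)$ of Lemma~\ref{lem-gmseq-prsubs}, and applying that lemma directly yields that $M$ has property~$(\gm)$.

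There is essentially no obstacle here: the corollary is a trivial specialization of the lemma through the elementary entailment $(\al^{\zeta})\Rightarrow(\al^{\pi})$, so no new estimates or constructions are required. The only point worth stating carefully is the choice $G\eq T$ (which is legitimate since $T\in\cG$) that witnesses the passage from the locally-uniform tightness to the global Prokhorov tightness. Thus the whole proof is a one-line reduction, and I would write it simply as: since $(\al^{\zeta})$ implies~$(\al^{\pi})$, the set~$M$ has properties~$(\al^{\pi})$ and~$(\beta)$, whence~$(\gm)$ follows from Lemma~\ref{lem-gmseq-prsubs}.
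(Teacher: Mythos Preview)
Your proposal is correct and follows exactly the same approach as the paper: the paper's proof is the single sentence ``The assertion follows from Lemma, because $(\al^{\zeta})$ is stronger than $(\al^{\pi})$,'' which is precisely your reduction via $G\eq T$ and Lemma~\ref{lem-gmseq-prsubs}.
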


\begin{proof}
The assertion follows from Lemma, because $(\al^{\zeta})$ is stronger that $(\al^{\pi})$.
\end{proof}

For a Tychonoff space this lemma can be generalized.

\begin{lemma}\label{lem-gmnet-prsubs}
Let $(T,\cG)$ be a Tychonoff space and a set $M$ have properties~$(\al^{\pi})$ and~$(\beta)$. Then~$M$ has property
\begin{enumerate}
\item[$(\gm_{net})$]
if $\net{f_n\in C_b(T,\cG)_+}{n\in N}\downarrow\nul$ in $F(T)$, then
$$
\lim\net{\sup\scol{|i_{\mu}f_n|}{\mu\in M}}{n\in N}=0 \text{ in }\Rbb.
$$
\end{enumerate}
\end{lemma}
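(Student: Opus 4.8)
The plan is to follow the scheme of Lemma~\ref{lem-gmseq-prsubs}, but to replace the appeal to Egorov's theorem (which is tailored to sequences) by Dini's theorem, whose net version is available precisely because in the Tychonoff case we work with \emph{continuous} functions and with a \emph{compact} tightness set.

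First I would fix $\eps>0$. Since a net indexed by a directed set~$N$ need not possess a first element (unlike the sequence term~$f_0$ used before), I would begin by selecting an arbitrary index $\bar n\in N$; monotonicity of the net gives $\|f_n\|_u\leqslant\|f_{\bar n}\|_u$ for all $n\geqslant\bar n$, which supplies the uniform sup-norm bound that $f_0$ provided in the sequence case. Using $(\beta)$ I set $b\eq\sup\scol{\mu T}{\mu\in M}>0$, and using $(\al^{\pi})$ I choose a compact set~$C$ with $\mu(T\razn C)<\eps_1\eq\eps/(2\|f_{\bar n}\|_u)$ for every $\mu\in M$.

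The key step is to apply Dini's theorem on the compact space~$C$: the net $\net{f_n|C}{n\in N}$ is a decreasing net of continuous functions converging pointwise to the continuous function $\nul|C$, so the convergence is uniform on~$C$. (For nets this is established exactly as for sequences: the sets $U_n\eq\{t\in C\mid f_n(t)<\eps_2\}$ are open by continuity, increase along~$N$ by monotonicity, and cover~$C$ by pointwise convergence; so by compactness finitely many cover~$C$, and by directedness of~$N$ a single index dominates them, whence $U_{n_0}=C$.) Thus there is $n_0\geqslant\bar n$, which we may arrange by directedness, such that $\sup\scol{f_n(t)}{t\in C}\leqslant\eps_2\eq\eps/(2b)$ for all $n\geqslant n_0$.

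Finally I would split the integral over~$C$ and $T\razn C$: for every $\mu\in M$ and every $n\geqslant n_0$,
$$
|i_{\mu}f_n|=i_{\mu}(f_n\chi(C))+i_{\mu}(f_n\chi(T\razn C))\leqslant\eps_2\,\mu(C)+\|f_{\bar n}\|_u\,\mu(T\razn C)\leqslant\eps_2\,b+\|f_{\bar n}\|_u\,\eps_1=\eps,
$$
where $i_{\mu}\chi(C)=\mu(C)$ is used as in the previous lemma. Since the bound is independent of~$\mu\in M$, this yields $(\gm_{net})$. The only genuine obstacle is the passage from sequences to nets: Egorov's theorem has no net analogue, so the argument hinges on the observation that the Tychonoff hypothesis permits working with bounded continuous functions on the compact tightness set, where Dini's theorem does hold for nets. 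The subsidiary bookkeeping point of fixing a base index~$\bar n$ in place of~$f_0$ must also be handled with care, but it causes no real difficulty.
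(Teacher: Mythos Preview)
Your proof is correct and follows essentially the same route as the paper's: both use $(\al^{\pi})$ to pick a compact $C$, invoke Dini's theorem on~$C$ to get uniform convergence of the net of continuous functions, and then split the integral over $C$ and $T\razn C$ with the bound from~$(\beta)$. Your treatment is in fact slightly more careful than the paper's, which writes $\|f_0\|_u$ as if the directed set~$N$ had a least element; your device of fixing an auxiliary index~$\bar n$ and arranging $n_0\geqslant\bar n$ is the proper way to handle this, and the trivial edge cases $\|f_{\bar n}\|_u=0$ or $b=0$ pose no difficulty.
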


\begin{proof}
Take some $\eps>0$. By~$(\al^{\pi})$ there is $C\in\cC$ such that $\mu(T\razn C)<\eps/(2\|f_0\|_u)$ for every $\mu\in M$. By~$(\beta)$ there is the number $b\eq\sup\scol{\mu T}{\mu\in M}>0$.

By virtue of the Dini theorem~\cite[Th.\,1 (2.3.4)]{ZakhRodi2018SFM2} for $\eps_1\eq\eps/(2b)$ there is~$n_0\in N$ such that
$\sup\{|f_n(t)|\mid t\in C\}\leqslant\eps_1$ for every $n\geqslant n_0$. Therefore 
$
|i_{\mu}f_n|\leqslant|i_{\mu}(f_n\chi(T\razn C))|+|i_{\mu}(f_n\chi(C))|\leqslant\|f_n\|_u\mu(T\razn C)+\eps_1\mu C<\eps
$
 for every $n\geqslant n_0$ and every $\mu\in M$. Hence, we get~$(\gm_{net})$.
\end{proof}

\begin{lemma}\label{lem-closposS-prsubs}
Let $(T,\cG)$ be a Hausdorff space, $M\subset\RM_b(T,\cG)_+$, and $\cl M$ be the closure of~$M$ in the weak topology $\cG_w(\RM_b(T,\cG),S(T,\cG))$.
Then $\cl M\subset\RM_b(T,\cG)_+$.
\end{lemma}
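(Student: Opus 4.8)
The plan is to reduce positivity of a Radon measure to a statement about its values on compact sets, and then transfer that nonnegativity across the weak closure using characteristic functions of compacta as test functions. The starting observation is that a measure $\mu\in\RM_b(T,\cG)$ is positive if and only if $\mu C\geqslant0$ for every $C\in\cC$. Indeed, one implication is trivial; for the converse, fix any $B\in\cB$ and $\eps>0$ and use the inner compact regularity of~$\mu$ to find $C\in\cC$ with $C\subset B$ and $|\mu B-\mu C|<\eps$. Then $\mu B>\mu C-\eps\geqslant-\eps$, and letting $\eps\to0$ gives $\mu B\geqslant0$, so $\mu\in\RM_b(T,\cG)_+$. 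Thus it suffices to prove that every $\mu\in\cl M$ satisfies $\mu C\geqslant0$ for all $C\in\cC$.

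First I would fix $\mu\in\cl M$ and an arbitrary compact set~$C$. Since $(T,\cG)$ is Hausdorff, $C$ is closed, so $\chi(C)\in S(T,\cG)$ (characteristic functions of closed sets belong to~$S(T,\cG)$, as recorded in Section~\ref{sec-prelim}); hence $\chi(C)$ is a legitimate test function for the weak topology $\cG_w(\RM_b,S(T,\cG))$. Moreover, by Lemma~1(3.3.2) from~\cite{ZakhRodi2018SFM2} we have $i_{\nu}\chi(C)=\nu C$ for every $\nu\in\RM_b(T,\cG)$, and in particular $i_{\mu}\chi(C)=\mu C$. The key step is now to exploit that $\mu$ lies in the closure of~$M$: for each $\eps>0$ the basic weak neighbourhood $G(\mu,\chi(C),\eps)$ must meet~$M$, so there exists $\nu\in M$ with $|i_{\nu}\chi(C)-i_{\mu}\chi(C)|<\eps$, i.e. $|\nu C-\mu C|<\eps$.

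Because $\nu\in M\subset\RM_b(T,\cG)_+$ we have $\nu C\geqslant0$, whence $\mu C>\nu C-\eps\geqslant-\eps$. As $\eps>0$ was arbitrary, this yields $\mu C\geqslant0$, and since $C\in\cC$ was arbitrary, the reduction from the first paragraph gives $\mu\in\RM_b(T,\cG)_+$. Therefore $\cl M\subset\RM_b(T,\cG)_+$, as claimed. The only genuinely substantive point here is the initial reduction: one has to notice that positivity of a Radon measure is detected already on compact sets (via inner compact regularity) and, crucially, that each such compact test is represented by a function $\chi(C)$ that actually belongs to the separating space $S(T,\cG)$ defining the weak topology; once those two facts are in place the closure argument is routine.
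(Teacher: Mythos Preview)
Your proof is correct and follows essentially the same approach as the paper: both use that $\chi(C)\in S(T,\cG)$ for compact~$C$, pick a nearby $\mu\in M$ from a basic weak neighbourhood to get $\nu C\geqslant0$ on compacta, and then invoke inner compact regularity to pass to all Borel sets. Your presentation is slightly cleaner in that you isolate the reduction ``positive on $\cC$ $\Rightarrow$ positive on $\cB$'' as a preliminary step and finish with a direct $\eps\to0$ argument, whereas the paper intertwines these steps and phrases them as a contradiction, but the substance is identical.
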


\begin{proof}
Take some $\nu\in\cl M$ and $B\in\cB$ such that $\eps\eq|\nu B|>0$. By the definition of a Radon measure there is $C\in\cC$ such that $C\subset B$ and $|\nu B-\nu C|<\eps$.  Then for $\de\eq|\nu C|>0$, $f\eq\chi(C)$, and $G\eq G(\nu,f,\de)$ we have $G\cap M\ne\vrn$, i.\,e., $|\nu C-\mu C|<\de$ for some $\mu\in M$. Hence, $0\leqslant\mu C<\nu C+\de=\nu C+|\nu C|$. If $\nu C<0$, then $0<0$. It follows from this contradiction that $\nu C\geqslant 0$.

Using the inequality $|\nu B-\nu C|<\eps$ we get $0\leqslant\nu C<\nu B+\eps=\nu B+|\nu B|$. If $\nu B<0$, then $0<0$. It follows from this contradiction that $\nu B\geqslant 0$. Thus, the measure~$\nu$ is positive.
\end{proof}

\begin{lemma}\label{lem-gmbar-prsubs}
Let $(T,\cG)$ be a Hausdorff space, $A(T)$ be a selected family of symmetrizable functions, $M$ be a subset of the set $\RM_b(T,\cG)_+$, and $\cl M$ be the closure of~$M$ in the weak topology $\cG_w(\RM_b(T,\cG),A(T))$.
Then for a sequence $\seq{f_n\in A(T)}{n\in\om}$ the following properties are equivalent:
\begin{enumerate}
\item[$(\de)$]
$\lim\seq{\sup\scol{|i_{\mu}f_n|}{\mu\in M}}{n\in\om}=0$ in~$\Rbb$;
\item[$(\bar\de)$]
$\lim\seq{\sup\scol{|i_{\nu}f_n|}{\nu\in\cl M}}{n\in\om}=0$ in~$\Rbb$,
\end{enumerate}
\end{lemma}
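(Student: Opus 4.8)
The plan is to establish, for each fixed $n\in\om$, the termwise equality
\[
\sup\scol{|i_{\mu}f_n|}{\mu\in M}=\sup\scol{|i_{\nu}f_n|}{\nu\in\cl M},
\]
after which both implications follow at once, since the two sequences appearing in $(\de)$ and $(\bar\de)$ then coincide entry by entry and so have identical limiting behaviour. One of the two inequalities is immediate: from $M\subset\cl M$ we obtain $\sup\scol{|i_{\mu}f_n|}{\mu\in M}\leqslant\sup\scol{|i_{\nu}f_n|}{\nu\in\cl M}$ for every~$n$, which already yields $(\bar\de)\Rightarrow(\de)$ by itself.

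For the opposite inequality I would fix $n\in\om$ and put $s_n\eq\sup\scol{|i_{\mu}f_n|}{\mu\in M}\in[0,\infty]$. If $s_n=\infty$ the inequality is vacuous, so assume $s_n<\infty$. The key point is that for the single fixed function $f_n\in A(T)$ the evaluation map $\nu\mapsto i_{\nu}f_n$ is continuous from $\RM_b$, equipped with the weak topology $\cG_w(\RM_b,A(T))$, into~$\Rbb$; this is merely a restatement of the defining basic neighbourhoods $G(\mu,\scol{f_k\in A(T)}{k\in n},\eps)$ of that topology (specialized to the single seminorm $s_{f_n}$), and is the same continuity underlying Lemma~\ref{lem-Lambda-weaktopdual}. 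Hence $\nu\mapsto|i_{\nu}f_n|$ is continuous as well, so the set $Z_n\eq\{\nu\in\RM_b\mid|i_{\nu}f_n|\leqslant s_n\}$ is closed. By the definition of~$s_n$ we have $M\subset Z_n$, and closedness of~$Z_n$ then forces $\cl M\subset Z_n$. Thus $|i_{\nu}f_n|\leqslant s_n$ for every $\nu\in\cl M$, which gives $\sup\scol{|i_{\nu}f_n|}{\nu\in\cl M}\leqslant s_n$, as required.

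Combining the two inequalities yields the announced termwise equality for every $n\in\om$, and hence the equivalence $(\de)\Leftrightarrow(\bar\de)$. I expect no genuine obstacle here: the only step deserving explicit mention is the continuity of the evaluation functionals $\nu\mapsto i_{\nu}f_n$, but this is built directly into the definition of $\cG_w(\RM_b,A(T))$ and so needs no separate argument. I would also remark that the positivity hypothesis $M\subset\RM_b(T,\cG)_+$ is not actually used in this particular proof; it is retained only for consistency with the surrounding results.
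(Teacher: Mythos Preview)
Your proof is correct and rests on the same idea as the paper's: continuity of the evaluation $\nu\mapsto i_{\nu}f_n$ in the weak topology. The paper packages this as a direct $\eps/3$ neighbourhood estimate (for $\nu\in\cl M$ pick $\mu\in M\cap G(\nu,f_n,\eps/3)$ and bound $|i_{\nu}f_n|\leqslant|i_{\mu}f_n|+|i_{\nu}f_n-i_{\mu}f_n|$), while you phrase it as closedness of the sublevel set $Z_n$; your route additionally yields the termwise equality of the two suprema, which is slightly more than the paper states but comes for free.
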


\begin{proof}
$(\de)\vd(\bar\de)$.
For any $\eps>0$ there is $n_0\in\om$ such that $\sup\scol{|i_{\mu}f_n|}{\mu\in M}<\eps/3$ for every $n\geqslant n_0$. Take $\nu\in\cl M$ and $n\geqslant n_0$ and consider the neighbourhood $G\eq G(\nu,f_{n},\eps/3)$. Since there exists some $\mu\in M\cap G\ne\vrn$, we get $|i_{\nu}f_{n}|\leqslant|i_{\mu}f_{n}|+|i_{\nu}f_{n}-i_{\mu}f_{n}|<2\eps/3$, whence $\sup\scol{|i_{\nu}f_n|}{\nu\in\cl M}<\eps$. This implies the necessary equality.

$(\bar\de)\vd(\de)$.
This deduction is evident.
\end{proof}

\begin{corollary*}
Let $(T,\cG)$ be a Hausdorff space, $A(T)$ be a selected family of symmetrizable functions, $M$ have properties~$(\al^{\pi})$ and~$(\beta)$, and $\cl M$ be the closure of~$M$ in the weak topology $\cG_w(\RM_b(T,\cG),A(T))$. Then~$M$ has property
\begin{enumerate}
\item[$(\bar\gm)$]
if $\seq{f_n\in A(T)_+}{n\in\om}\downarrow\nul$ in $F(T)$, then
$$
 \lim\seq{\sup\scol{|i_{\mu}f_n|}{\mu\in\cl M}}{n\in\om}=0 \text{ in } \Rbb.
$$
\end{enumerate}
\end{corollary*}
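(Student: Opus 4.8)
The plan is to obtain $(\bar\gm)$ purely by chaining the two preceding results, since property $(\bar\gm)$ is nothing other than property $(\bar\de)$ of Lemma~\ref{lem-gmbar-prsubs}, specialized to a sequence $\seq{f_n\in A(T)_+}{n\in\om}$ decreasing pointwise to $\nul$ (the only difference being a cosmetic renaming of the dummy variable from $\nu$ to $\mu$). So it suffices to produce $(\bar\de)$ for every such sequence.

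First I would fix an arbitrary sequence $\seq{f_n\in A(T)_+}{n\in\om}\downarrow\nul$ in $F(T)$. Since $M$ has properties $(\al^{\pi})$ and $(\beta)$ and $A(T)$ is a selected family of symmetrizable functions on the Hausdorff space $(T,\cG)$, Lemma~\ref{lem-gmseq-prsubs} applies and yields property $(\gm)$, namely $\lim\seq{\sup\scol{|i_{\mu}f_n|}{\mu\in M}}{n\in\om}=0$ in $\Rbb$. This is precisely property $(\de)$ of Lemma~\ref{lem-gmbar-prsubs} for the chosen sequence.

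Next I would invoke the implication $(\de)\vd(\bar\de)$ of Lemma~\ref{lem-gmbar-prsubs}. Its hypotheses are met: $M\subset\RM_b(T,\cG)_+$ (this containment is built into the very definition of properties $(\al^{\pi})$ and $(\beta)$ at the start of the section), $A(T)$ is selected, and $\cl M$ is the closure of $M$ in $\cG_w(\RM_b(T,\cG),A(T))$. The lemma therefore upgrades the supremum over $M$ to a supremum over the larger set $\cl M$, delivering $(\bar\de)$, which is exactly $(\bar\gm)$.

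I do not expect any genuine obstacle here. All the analytic content — the Egorov-theorem estimate that drives $(\gm)$, and the neighbourhood argument that drives $(\de)\Rightarrow(\bar\de)$ — has already been discharged inside the two cited lemmas, so the corollary reduces to a pure composition. The only point worth a moment's care is the bookkeeping that the particular sequence used here is an admissible instance of the (more general, not necessarily monotone or nonnegative) sequence allowed in Lemma~\ref{lem-gmbar-prsubs}; since $\seq{f_n\in A(T)_+}{n\in\om}\downarrow\nul$ is in particular a sequence in $A(T)$, this is immediate.
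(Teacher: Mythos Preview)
Your proposal is correct and follows exactly the paper's approach: the paper's proof is simply ``The assertion follows from Lemmas~\ref{lem-gmseq-prsubs} and~\ref{lem-gmbar-prsubs},'' and you have spelled out precisely that chain.
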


\begin{proof}
The assertion follows from Lemmas~\ref{lem-gmseq-prsubs} and~\ref{lem-gmbar-prsubs}.
\end{proof}

\begin{lemma}\label{lem-betapr-prsubs}
Let $(T,\cG)$ be a Hausdorff space, $A(T)$ be a selected family of symmetrizable functions, $M$ be a subset of the set $\RM_b(T,\cG)_+$, and $\cl M$ be the closure of~$M$ in the weak topology $\cG_w(\RM_b(T,\cG),A(T))$.
Then the following properties are equivalent:
\begin{enumerate}
\item[$(\beta)$]
$b\eq\sup\scol{\mu T}{\mu\in M}\in\Rbb_+$;
\item[$(\beta')$]
$b'\eq\sup\scol{\sup\{|i_{\mu}f|\mid f\in A(T)\wedge |f|\leqslant\edin\}}{\mu\in M}\in\Rbb_+$;
\item[$(\beta'')$]
$b''\eq\sup\scol{\sup\{|i_{\mu}f|\mid f\in A(T)\wedge \|f\|_u\leqslant 1\}}{\mu\in M}\in\Rbb_+$;
\item[$(\bar\beta'')$]
$\bar{b}''\eq\sup\scol{\sup\{|i_{\nu}f|\mid f\in A(T)\wedge \|f\|_u\leqslant 1\}}{\nu\in\cl M}\in\Rbb_+$.
\end{enumerate}
\end{lemma}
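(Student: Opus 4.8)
The plan is to establish the cycle $(\beta)\vd(\beta')\vd(\beta'')\vd(\bar\beta'')\vd(\beta)$, which delivers the full four-way equivalence. Before starting I would record the trivial observation that the conditions $|f|\leqslant\edin$ and $\|f\|_u\leqslant1$ describe one and the same set of functions $f\in A(T)$: indeed $|f|\leqslant\edin$ says precisely that $|f(t)|\leqslant1$ for all $t\in T$, i.e. that $\sup_{t}|f(t)|\leqslant1$. Hence the inner suprema in $(\beta')$ and $(\beta'')$ coincide, so $b'=b''$ and the step $(\beta')\vd(\beta'')$ is immediate (the two statements are in fact literally identical).

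For $(\beta)\vd(\beta')$ I would invoke the positivity and linearity of the integral (Proposition~2~(3.3.6) from~\cite{ZakhRodi2018SFM2}). Since $\mu\in\RM_b(T,\cG)_+$, the bound $|f|\leqslant\edin$ gives $-\edin\leqslant f\leqslant\edin$ and hence $|i_{\mu}f|\leqslant i_{\mu}\edin=\mu T\leqslant b$ by monotonicity of $\Lambda(\mu)$. Taking the supremum first over all admissible $f$ and then over $\mu\in M$ yields $b'\leqslant b$, so $b\in\Rbb_+$ forces $b'\in\Rbb_+$.

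The step $(\beta'')\vd(\bar\beta'')$ is the only one that genuinely uses the weak topology, and it proceeds exactly along the lines of Lemma~\ref{lem-gmbar-prsubs}. From $M\subset\cl M$ we get $b''\leqslant\bar b''$ for free. For the reverse inequality I would fix $\nu\in\cl M$ together with an $f\in A(T)$ satisfying $\|f\|_u\leqslant1$, and for arbitrary $\eps>0$ use that the basic neighbourhood $G(\nu,f,\eps)$ meets $M$ at some $\mu$; then $|i_{\nu}f|\leqslant|i_{\mu}f|+|i_{\nu}f-i_{\mu}f|<b''+\eps$, and letting $\eps\to0$ gives $|i_{\nu}f|\leqslant b''$. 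Passing to the supremum over such $f$ and over $\nu\in\cl M$ produces $\bar b''\leqslant b''$, so in fact $\bar b''=b''$ and $(\bar\beta'')$ holds whenever $(\beta'')$ does.

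Finally, $(\bar\beta'')\vd(\beta)$ is where I would use that $A(T)$ contains the unit function, with $\|\edin\|_u=1$. For every $\mu\in M\subset\cl M$ one has $i_{\mu}\edin=\mu T\geqslant0$, so $\mu T=|i_{\mu}\edin|\leqslant\bar b''$; taking the supremum over $\mu\in M$ gives $b\leqslant\bar b''\in\Rbb_+$, i.e. $(\beta)$. The only real point of care in the whole argument is the closure approximation in the third step; all the other implications are direct consequences of the positivity and monotonicity of the integral and of the presence of $\edin$ in $A(T)$.
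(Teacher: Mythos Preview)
Your proposal is correct and follows essentially the same approach as the paper: the paper also uses positivity of $\mu$ to get $|i_{\mu}f|\leqslant i_{\mu}\edin=\mu T$ for $(\beta)\vd(\beta')$, observes that $|f|\leqslant\edin$ and $\|f\|_u\leqslant1$ are the same condition for $(\beta')\Leftrightarrow(\beta'')$, and runs the identical neighbourhood argument $|i_{\nu}f|<b''+\eps$ for $(\beta'')\vd(\bar\beta'')$. The only cosmetic difference is that the paper closes the loop via $(\beta')\vd(\beta)$ and $(\bar\beta'')\vd(\beta'')$ separately rather than via your single cycle, but the ingredients are the same.
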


\begin{proof}
$(\beta)\vd(\beta')$.
Let $f\in A(T)$ and $|f|\leqslant\edin$. Then by Lemma~1~(3.3.6) and Theorem~2~(3.3.2) from~\cite{ZakhRodi2018SFM2}
$|i_{\mu}f|\leqslant i_{\mu}|f|\leqslant i_{\mu}\edin=\mu T\leqslant b$.

$(\beta')\vd(\beta)$.
It is clear that $\mu T=i_{\mu}\edin=|i_{\mu}\edin|\leqslant b'$.

$(\beta'')\vd(\bar\beta'')$.
Let $\eps>0$, $f\in A(T)$, and $\|f\|_u\leqslant 1$. Take $\nu\in\cl M$ and consider its neighbourhood $G\eq G(\nu,f,\eps)$. Since $M\cap G\ne\vrn$, there exists some $\mu\in M\cap G$. Therefore we get $|i_{\nu}f|\leqslant|i_{\mu}f|+|i_{\nu}f-i_{\mu}f|<b''+\eps$.
Since~$\eps$ is arbitrary, this implies $|i_{\nu}f|\leqslant b''$.

$(\bar\beta'')\vd(\beta'')$.
This deduction is evident.

The equivalence of $(\beta')$ and $(\beta'')$ follows from the equivalence of conditions $|f|\leqslant\edin$ and $\|f\|_u\leqslant 1$ in~$A(T)$.
\end{proof}

\begin{lemma}\label{lem-albar-prsubs}
Let $(T,\cG)$ be a Hausdorff space, $M\subset\RM_b(T,\cG)_+$, and $\cl M$ be the closure of~$M$ in the weak topology $\cG_w(\RM_b(T,\cG),S(T,\cG))$.
Then property~$(\al^{\zeta})$ is equivalent to property
\begin{enumerate}
\item[$(\bar\al^{\zeta})$]
for any $G\in\cG$ and any $\eps>0$ there exists a compact set~$C\subset G$ such that 
$\nu(G\razn C)<\eps$ for any $\nu\in\cl M$.
\end{enumerate}
\end{lemma}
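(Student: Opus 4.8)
The plan is to prove the two implications separately, with the backward direction being immediate and essentially all the content living in the forward one. Since $M\subset\cl M$, any compact $C\subset G$ that works for all $\nu\in\cl M$ in~$(\bar\al^\zeta)$ works a fortiori for all $\mu\in M$, so $(\bar\al^\zeta)$ trivially implies~$(\al^\zeta)$. The real task is $(\al^\zeta)\vd(\bar\al^\zeta)$, and the guiding idea is that the tail mass $\rho\mapsto\rho(G\razn C)$ is itself the value of a \emph{fixed} symmetrizable test functional, so it is controlled by weak approximation.

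First I would fix $G\in\cG$ and $\eps>0$ and apply~$(\al^\zeta)$ with tolerance $\eps/2$ to obtain a single compact set $C\subset G$ with $\mu(G\razn C)<\eps/2$ for every $\mu\in M$; this $C$, chosen uniformly over~$M$, is the compact set I will show also works for~$\cl M$. The key observation is that, since $C$ is compact and hence closed in the Hausdorff space $(T,\cG)$, the set $G\razn C$ is open, so $\chi(G\razn C)\in S(T,\cG)$; moreover $C\subset G$ gives the pointwise identity $\chi(G\razn C)=\chi(G)-\chi(C)$, whence by linearity of $i_\rho$ together with the identities $i_\rho\chi(G)=\rho G$ and $i_\rho\chi(C)=\rho C$ (the latter being Lemma~1~(3.3.2) of~\cite{ZakhRodi2018SFM2}, the former its analogue for open sets) we obtain $i_\rho\chi(G\razn C)=\rho(G\razn C)$ for every $\rho\in\RM_b$. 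Thus the tail mass on $G\razn C$ is exactly the value of the fixed functional $f\eq\chi(G\razn C)\in S(T,\cG)$.

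With this in hand the approximation step is routine. Given $\nu\in\cl M$ and any $\de>0$, the weak neighbourhood $G(\nu,f,\de)$ meets~$M$ by the definition of the closure, so there is $\mu\in M$ with $|i_\nu f-i_\mu f|<\de$, i.e. $\nu(G\razn C)<\mu(G\razn C)+\de<\eps/2+\de$. Letting $\de\to0$ with $C$, $\eps$, and~$\nu$ held fixed yields $\nu(G\razn C)\leqslant\eps/2<\eps$, which is~$(\bar\al^\zeta)$. I expect the only genuinely delicate point to be the identity $i_\rho\chi(G\razn C)=\rho(G\razn C)$: it is precisely here that one exploits the richness of $A(T)=S(T,\cG)$, which contains the characteristic functions of open and closed sets, and this is exactly why the lemma is stated for the full symmetrizable space rather than for an arbitrary selected $A(T)$ such as $C_b(T,\cG)$. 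Finally, by Lemma~\ref{lem-closposS-prsubs} the measures $\nu\in\cl M$ are positive, so $\nu(G\razn C)\geqslant0$ and the conclusion is the expected smallness of the tail mass.
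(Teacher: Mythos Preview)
Your proof is correct and follows essentially the same route as the paper's: choose $C$ via $(\al^\zeta)$ with tolerance $\eps/2$, test against the single function $f\eq\chi(G\razn C)\in S(T,\cG)$, and pass from $\mu\in M$ to $\nu\in\cl M$ via an arbitrary weak neighbourhood $G(\nu,f,\de)$, letting $\de\to0$. The paper is slightly terser---it does not spell out the decomposition $\chi(G\razn C)=\chi(G)-\chi(C)$ or invoke Lemma~\ref{lem-closposS-prsubs} explicitly---but the argument is the same.
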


\begin{proof}
$(\al^{\zeta})\vd(\bar\al^{\zeta})$.
Let $\eps>0$ and $G\in\cG$.
By condition there exists a compact set~$C\subset G$ such that $\sup\scol{\mu(G\razn C)}{\mu\in M}\leqslant\eps/2$.  Take $\nu\in\cl M$ and $\de>0$. Consider the function $f\eq\chi(G\razn C)$ and the neighbourhood $H\eq G(\nu,f,\de)$. Since $M\cap H\ne\vrn$, there exists some $\mu\in M\cap H$. Therefore we get 
$
0\leqslant\nu(G\razn C)=i_{\nu}f=i_{\mu}f+i_{\nu}f-i_{\mu}f\leqslant i_{\mu}f+|i_{\nu}f-i_{\mu}f|<\mu(G\razn C)+\de<\eps/2+\de.
$
Since~$\de$ is arbitrary, this implies $\nu(G\razn C)\leqslant\eps/2<\eps$.

$(\bar\al^{\zeta})\vd(\al^{\zeta})$.
This deduction is evident.
\end{proof}

\section{The $S(T,\cG)$-weak compactness of sets of bounded Radon measures on a Hausdorff space}\label{sec-SwcH}

As it was noticed in Introduction, the $C_b(T,\cG)$-weak compactness of~$M$ is not appropriate in the case of a general Hausdorff topological space $(T,\cG)$ because it may consist only of constant functions (see, e.\,g., \cite[2.7.17]{Engelking1989}). So, we consider the $S(T,\cG)$-compactness and obtain the following criterion.

\begin{theorem}\label{theo-1-SwcH}
Let $(T,\cG)$ be a Hausdorff space, $M\subset\RM_b(T,\cG)_+$, and $\cl M$ be the closure of~$M$ in the weak topology $\cG_w(\RM_b(T,\cG),S(T,\cG))$. Then the following conclusions are equivalent:
\begin{enumerate}
\item 
$\cl M$ is compact in the induced weak topology $\cG_w(\RM_b,S)|\cl M$;
\item 
$M$ has properties $(\al^{\zeta})$ and $(\beta)$.
\end{enumerate}
\end{theorem}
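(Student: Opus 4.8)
The plan is to prove the two implications separately, transporting everything to the dual space $X'=S(T,\cG)'$ via the map $\Lambda:\mu\mapsto i_{\mu}$. By Lemma~\ref{lem-separS-weaktop} this map is injective, and by Lemma~\ref{lem-Lambda-weaktopdual} it is continuous; since the weak seminorms $s_f(\mu)=|i_{\mu}f|$ on $\RM_b$ correspond under $\Lambda$ exactly to the weak${}'$ seminorms $\si_f$ on $X'$, the restriction $\Lambda|\cl M$ is in fact a homeomorphism onto its image $\Lambda[\cl M]\subset X'$, with continuous inverse. Thus compactness of $\cl M$ is equivalent to weak${}'$-compactness of $\Lambda[\cl M]$, and it suffices to argue about the latter. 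Recall also that $\cl M\subset\RM_b(T,\cG)_+$ by Lemma~\ref{lem-closposS-prsubs}.

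For $(2)\vd(1)$ I would first use $(\beta)$, which by Lemma~\ref{lem-betapr-prsubs} is equivalent to $(\bar\beta'')$, to place $\Lambda[\cl M]$ inside the ball $\{\xi\in X'\mid\|\xi\|\leqslant\bar b''\}$; by the Banach--Alaoglu theorem this ball is weak${}'$-compact, so the weak${}'$-closure $K$ of $\Lambda[\cl M]$ in $X'$ is weak${}'$-compact. The crux is to show $K=\Lambda[\cl M]$, i.e.\ that $\Lambda[\cl M]$ is already weak${}'$-closed. Take $\xi\in K$, the weak${}'$-limit of a net $(i_{\nu_j})$ with $\nu_j\in\cl M$; then $\xi$ is positive (being a pointwise limit of positive functionals on $A(T)_+$) and bounded by $(\bar\beta'')$. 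Pointwise $\si$-continuity of $\xi$ follows from property~$(\bar\gm)$ (the Corollary to Lemma~\ref{lem-gmbar-prsubs}, available since $(\al^{\zeta})$ implies $(\al^{\pi})$): for $f_n\downarrow\nul$ one has $|\xi f_n|=\lim_j|i_{\nu_j}f_n|\leqslant\sup\scol{|i_{\nu}f_n|}{\nu\in\cl M}\to 0$. Quite local tightness of $\xi$ comes from $(\al^{\zeta})$ in the form $(\bar\al^{\zeta})$ (Lemma~\ref{lem-albar-prsubs}): given $G$, $u$, $\eps$, choose compact $C\subset G$ with $\sup_{\nu\in\cl M}\nu(G\razn C)$ small; then for $f\in A(T)$ with $|f|\leqslant\chi(G)\land u$ and $\sup_C|f|\leqslant\de$, splitting the integral over $C$ and $G\razn C$ gives $|i_{\nu}f|\leqslant\de\,\nu(T)+\|u\|_u\,\nu(G\razn C)$ uniformly over $\cl M$, and passing to the limit bounds $|\xi f|$. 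Hence $\xi$ is $\si$-exact, so by the Riesz representation theorem for $\si$-exact functionals it equals $i_{\rho}$ for a positive bounded Radon measure $\rho$. Since then $\nu_j\to\rho$ in the weak topology of $\RM_b$ (by the homeomorphism) and $\cl M$ is closed, $\rho\in\cl M$ and $\xi\in\Lambda[\cl M]$. Therefore $\Lambda[\cl M]=K$ is weak${}'$-compact, which yields~(1).

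For $(1)\vd(2)$, property~$(\beta)$ is immediate: the map $\nu\mapsto\nu T=i_{\nu}\edin$ is weak-continuous, so its image over the compact set $\cl M$ is a bounded subset of $\Rbb_+$, whence $\sup\scol{\mu T}{\mu\in M}<\infty$. For $(\al^{\zeta})$ I would prove the equivalent $(\bar\al^{\zeta})$ (Lemma~\ref{lem-albar-prsubs}) by contradiction. If it fails, there are $G\in\cG$ and $\eps>0$ such that for every compact $C\subset G$ some $\nu_C\in\cl M$ satisfies $\nu_C(G\razn C)\geqslant\eps$. Directing the compact subsets of $G$ by inclusion turns $(\nu_C)$ into a net in $\cl M$, which by compactness has a subnet converging to some $\rho\in\cl M$. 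Being a Radon measure, $\rho$ is inner regular, so there is a compact $C'\subset G$ with $\rho(G\razn C')<\eps/2$. Since $G\razn C'$ is open, $\chi(G\razn C')\in S(T,\cG)$ and $\nu\mapsto\nu(G\razn C')$ is weak-continuous, so along the subnet $\nu_C(G\razn C')\to\rho(G\razn C')<\eps/2$. But by cofinality eventually $C\supset C'$, whence $G\razn C\subset G\razn C'$ and $\eps\leqslant\nu_C(G\razn C)\leqslant\nu_C(G\razn C')$, forcing the limit to be $\geqslant\eps$ --- a contradiction. Thus $(\bar\al^{\zeta})$, and hence $(\al^{\zeta})$, holds.

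The main obstacle is the representation step in the sufficiency direction: Banach--Alaoglu yields only an abstract weak${}'$-limit functional $\xi$, and the real work is to certify that $\xi$ is $\si$-exact --- combining $\si$-continuity from $(\bar\gm)$ with quite local tightness from $(\bar\al^{\zeta})$ --- so that the Riesz representation theorem returns an honest bounded Radon measure. The two hypotheses $(\al^{\zeta})$ and $(\beta)$ enter precisely here, and the delicate point is that the uniform estimates over $\cl M$ must survive passage to the net-limit, so that the limit functional is represented by a measure $\rho$ lying in $\cl M$ rather than merely being an extraneous point of the weak${}'$-closure $K$.
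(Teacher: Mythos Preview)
Your argument is correct and follows the same overall architecture as the paper: transport to $X'$ via $\Lambda$, invoke Banach--Alaoglu, and use the representation theorem for $\si$-exact functionals on $S(T,\cG)$ to recover a Radon measure from the limiting functional. The sufficiency direction $(2)\Rightarrow(1)$ is virtually identical to the paper's, except that you verify \emph{quite} local tightness of $\xi$ directly via the splitting estimate $|i_{\nu}f|\leqslant\de\,\nu(T)+\|u\|_u\,\nu(G\razn C)$, whereas the paper first checks local tightness (with $|f|\leqslant\chi(G\razn C)\wedge u$) and then upgrades via an auxiliary lemma; your route is a shade more direct.

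In the necessity direction $(1)\Rightarrow(2)$ your execution differs in two places, both arguably simpler. For $(\beta)$ you observe that $\nu\mapsto\nu T=i_{\nu}\edin$ is a single weak-continuous real-valued function, so its image on the compact set $\cl M$ is bounded; the paper instead proves pointwise boundedness of $\{i_{\mu}\}$ on every $f$ and then invokes the Banach--Steinhaus theorem to obtain uniform norm-boundedness --- a stronger conclusion than needed. For $(\al^{\zeta})$ you run a net-plus-contradiction argument exploiting that $\nu\mapsto\nu(G\razn C')=i_{\nu}\chi(G\razn C')$ is weak-continuous (since $G\razn C'$ is open, hence $\chi(G\razn C')\in S(T,\cG)$); the paper instead gives a direct finite-subcover argument, covering $\cl M$ by neighbourhoods $G(\mu,\chi(K\razn C_{\mu}),\eps/2)$ and taking $C$ to be the union of the finitely many $C_{\mu_j}$. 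Both routes work; yours relies on the same key fact (that $\chi$ of an open-minus-compact set lies in $S$), while the paper's is constructive.
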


\begin{proof}
Remind that according to~\cite[3.1]{Engelking1989} a topological space is called \emph{compact} if it is Hausdorff and every open cover of it has a finite subcover.
Denote $\cl M$ by~$N$.

$(1)\vd (2)$.
Take some $K\in\cK(T,\cG)$ and $\eps>0$. 
By the definition of a Radon measure, for every $\mu\in N$ there exists a set $C_{\mu}\in\cC$ such that $C\subset K$ and $\mu(K\razn C_{\mu})<\eps/2$. Consider the corresponding open neighbourhoods
$U_{\mu}\eq G(\mu,\chi(K\razn C_{\mu}),\eps/2)$ of the points $\mu\in N$.

Since $N$ is compact, there exists a finite subcover $\scol{U_{\mu_j}}{j\in J}$ of the cover $\scol{U_{\mu}}{\mu\in N}$ of the set~$N$. Take the compact set $C\eq\add{C_{\mu_j}}{j\in J}\subset K$. If $\mu\in N$, then $\mu\in U_{\mu_j}$ for some~$j$. Therefore 
$
0\leqslant\mu(K\razn C)\leqslant\mu(K\razn C_{\mu_j})=\int\chi(K\razn C_{\mu_j})\,d\mu
\leqslant\Bigl|\int\chi(K\razn C_{\mu_j})\,d\mu-
\int\chi(K\razn C_{\mu_j})\,d\mu_j\Bigr|+\Bigl|\int\chi(K\razn C_{\mu_j})\,d\mu_j\Bigr|
<\eps/2+\mu(K\razn C_{\mu_j})<\eps.
$
This implies property~$(\bar\al^{\zeta})$, and therefore, property~$(\al^{\zeta})$.

Deduce now property~$(\beta)$.
For $A(T)\eq S(T,\cG)$ consider the corresponding continuous mapping~$\Lambda:\mu\mapsto i_{\mu}$. The set $I_{N}\eq\Lambda[N]$ is compact in the Hausdorff topological space $Y\eq A(T)'$ equipped by the weak $'$ topology $\cG_{w'}$ as the continuous image of compact set (see Theorem~3.1.10 from~\cite{Engelking1989} and Lemma~\ref{lem-separS-weaktop}).

For every $f\in A(T)$ consider the mapping $u_f:X'\to\Rbb$ such that $u_f(\xi)=\xi(f)$ for every $\xi\in X'$. 
The mapping $u_f:Y\to\Rbb$ of Hausdorff topological spaces is continuous. In fact, fix $\xi$, $r\eq u_f(\xi)$, and $H\eq ]r-\eps,r+\eps[$ and take $G\eq G(\xi,f,\eps)$. If $\eta\in G$, then by definition $|\eta f-r|<\eps$, and, therefore, $u_f[G]\subset H$, which means the continuity of~$u_f$.

By mentioned Theorem~3.1.10 the set $u_f[I_{N}]$ is compact in~$\Rbb$. Consequently, it is bounded in~$\Rbb$. Therefore
$r_f\eq\sup\{|i_{\mu}f|\mid\mu\in N\}=\sup\{|u_f(i_{\mu})|\mid\mu\in N\}\in\Rbb$ for every $f\in A(T)$.

By the Baire theorem (see,  e.g.~\cite[Th.\,~15.6.2]{Semadeni1971}) the Banach space~$X\eq A(T)$ is a Baire space. Hence, $X$ is the set of second category in itself. Having the proved pointwise boundedness $r_f\in\Rbb$ and applying the Banach\,--\,Steinhaus theorem (see the Corollary to Theorem~4.2~(III) in~\cite{Schaefer1966} and Corollary~2 to Theorem~3 in~\cite[4.2]{Robertson1964}) to $X'$ considered as a normed space and the set $I_{N}\subset X'$, we conclude that $b\eq\sup\scol{\|i_{\mu}\|'}{\mu\in N}\in\Rbb$.

By definition, $\|i_{\mu}\|'\eq\sup\{|i_{\mu}f|\mid f\in A(T)\wedge \|f\|_u\leqslant 1\}$. Thus, we get the equality
$\sup\scol{\sup\{|i_{\mu}f|\mid f\in A(T)\wedge \|f\|_u\leqslant 1\}}{\mu\in N}=b$, i.\,e., property~$(\bar\beta'')$. By virtue of Lemma~\ref{lem-betapr-prsubs} this gives property~$(\beta)$.

$(2)\vd (1)$.
We are going to use Theorem~3.1.23 from~\cite{Engelking1989}. Take a net $s\eq\net{\mu_{\kap}\in N}{\kap\in K}$ and consider the corresponding net $\si\eq\net{i_{\mu_{\kap}}\in I_{N}}{\kap\in K}$ in~$X'$.

Using the unit ball $B\eq\{f\in A(T)\mid \|f\|_u\leqslant 1\}$ in the Banach space~$X$, consider the polar set 
$C\eq\{\xi\in X'\mid\forall\,f\in B\ (|\xi(f)|\leqslant 1)\}$ in the topological linear space~$X'$ equipped with the weak topology~$\cG_{w'}$.
According to the Alaoglu\,--\,Bourbaki theorem \cite[Th.\,7 (III.3)]{KantAk1971}, the set~$C$ is compact.
Take the number $a\eq\sup\scol{\sup\{|i_{\mu}f|\mid f\in B\}}{\mu\in M}\in ]0,\infty[$ from condition~$(\bar\beta'')$.
Since~$C$ is compact, the set $C_a\eq\{\xi\in X'\mid\forall\,f\in B\ (|\xi(f)|\leqslant a)\}$ is also compact. The condition~$(\beta'')$ means that $I_{N}\subset C_a$. Therefore the set $\cl I_{N}$ is compact in~$X'$.

By the well-known compactness criterion (see, e.g.~Theorem~3.1.23 in~\cite{Engelking1989}) the net~$\si$ has a cluster point $\vphi\in X'$. Using the property~$(\bar\gm)$ from Corollary to Lemma~\ref{lem-gmbar-prsubs} check that~$\vphi$ is pointwise $\si$-continuous.
Let $\eps>0$ and $\seq{f_n\in A(T)}{n\in\om}\downarrow\nul$ in $F(T)$. By condition~$(\bar\gm)$ there is~$n_0$ such that $\sup\scol{|i_{\mu}f_n|}{\mu\in N}<\eps/2$ for every $n\geqslant n_0$. Since~$\vphi$ is a cluster point, for~$\eps$, $n\geqslant n_0$, and the neighbourhood $G\eq G(\vphi,f_n,\eps/2)$ there exists~$\kap\in K$ such that $i_{\mu_{\kap}}\in G$, i.\,e., 
$|i_{\mu_{\kap}}f_n-\vphi f_n|<\eps/2$. Consequently, $|\vphi f_n|<\eps$ for every $n\geqslant n_0$. 
Hence, $\lim\seq{\vphi f_n}{n\in\om}=0$.

Check that $\vphi$ is locally tight. Note that by Lemma~\ref{lem-closposS-prsubs} $\mu\geqslant\nul$ for every $\mu\in N$, and, therefore, $i_{\mu}h\geqslant 0$ for every $h\in A(T)_+$.
Take some $G\in\cG$, $u\in A(T)_+$, and $\eps>0$. By property~$(\bar\al^{\zeta})$ (see Lemma~\ref{lem-albar-prsubs}) there is a compact set $C\subset G$ such that $\sup\scol{\mu(G\razn C)}{\mu\in N}<\eps/4$.

Consider some $f\in A(T)$ such that $|f|\leqslant\chi(G\razn C)\wedge u$. Then we have
$
\sup\scol{i_{\mu}|f|}{\mu\in N}\leqslant\sup\scol{i_{\mu}\chi(G\razn C)}{\mu\in N}=\sup\scol{\mu(G\razn C)}{\mu\in N}<\eps/4.
$ 
Hence, we get $\sup\scol{i_{\mu}f_+}{\mu\in N}<\eps/4$ and $\sup\scol{i_{\mu}(-f_-)}{\mu\in N}<\eps/4$.

Since $\vphi$ is a cluster point, there is $\kap\in K$ such that $i_{\mu_{\kap}}\in G(\vphi,f_+,\eps/4)$. This means 
$|i_{\mu_{\kap}}f_+-\vphi f_+|<\eps/4$. Consequently, $|\vphi_+ f|<\eps/2$. Similarly, $|\vphi(-f_-)|<\eps/2$. As a result, we get $|\vphi f|=|\vphi(f_+ + f_-)|\leqslant|\vphi f_+|+|-\vphi f_-|<\eps$, i.\,e., $\vphi$ is locally tight.
Then by Lemma~3~(3.6.1) $\vphi$ is quite locally tight. Thus, we obtain that~$\vphi$ is $\si$-exact.

Now, according to the Zakharov representation theorem~\cite[Th.\,3~(3.6.3)]{ZakhRodi2018SFM2}, there exists some measure $\mu_0\in\RM_b(T,\cG)_+$ such that $\vphi=i_{\mu_0}$. Check that~$\mu_0$ is a cluster point for~$s$. 

Take some neighbourhood
$H\eq G(\mu_0,\scol{f_k\in A(T)}{k\in n},\eps)$ of $\mu_0$ and some index $\kap\in K$. Since~$\vphi$ is a cluster point for~$\si$, for the neighbourhood $G\eq G(\vphi,\scol{f_k}{k\in n},\eps/2)$ and for the index~$\kap$ there is an index $\rho\in K$ such that $\rho\geqslant\kap$ and $i_{\mu_{\rho}}\in G$, i.\,e., $|i_{\mu_{\rho}}f_k-\vphi f_k|<\eps$ for every $k\in n$. Since $\vphi=i_{\mu_0}$, we conclude that $|i_{\mu_{\rho}}f_k-i_{\mu_0} f_k|<\eps$ for every $k\in n$. This means that $\mu_{\rho}\in H$. Hence, $\mu_0$ is a cluster point for~$s$.

Since $\mu_0$ is a cluster point for~$s$, for every $H\eq G(\mu_0,\scol{f_k\in A(T)}{k\in n},\eps)$ there is $\kap\in K$ such that $\mu_{\kap}\in M\cap H\ne\vrn$. Hence, $\mu_0\in N$ (see, e.g.~\cite[Prop.\,1.1.1]{Engelking1989}).
Finally, by the mentioned compactness criterion $N$ is compact.
\end{proof}

In the proved criterion the assertion on sufficiency for $M\subset\RM_b(T,\cG)_+$ to be $S(T,\cG)$-weakly compact can be extended up to an arbitrary set $M\subset\RM_b(T,\cG)$.

\begin{lemma}\label{lem-phifcont-SwcH}
Let $(T,\cG)$ be a Hausdorff space and $f\in S(T,\cG)$.
Then the function $\vphi_f:\RM_b(T,\cG)\to\Rbb$ such that $\vphi_f(\mu)\eq\int f\,d\mu$ for every $\mu\in\RM_b(T,\cG)$ is continuous on the topological space $(\RM_b(T,\cG),\cG_w(\RM_b(T,\cG),S(T,\cG)))$.
\end{lemma}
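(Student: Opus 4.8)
The plan is to verify continuity directly from the definition of the weak topology $\cG_w(\RM_b(T,\cG),S(T,\cG))$, exploiting the fact that $f$ is itself one of the functions generating this topology. First I would check that $\vphi_f$ is everywhere well-defined and real-valued: by Lemma~1~(3.5.2) of~\cite{ZakhRodi2018SFM2} we have $S(T,\cG)\subset MI(T,\cG,\mu)$ for every $\mu\in\RM_b(T,\cG)$, and since a symmetrizable function is bounded ($S_b(T,\cG)=S(T,\cG)$) and $\mu$ is a bounded measure, the integral $\int f\,d\mu$ is a finite number. In the notation of Section~\ref{sec-weaktop} this means $\vphi_f(\mu)=i_{\mu}(f)=\Psi(\mu,f)\in\Rbb$, so $\vphi_f$ really maps into~$\Rbb$.

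Next I would fix an arbitrary $\mu_0\in\RM_b(T,\cG)$, set $r_0\eq\vphi_f(\mu_0)=i_{\mu_0}(f)$, and take any open set $H\subset\Rbb$ with $r_0\in H$. Choosing $\eps>0$ so that $]r_0-\eps,r_0+\eps[\,\subset H$, I would form the basic weak neighbourhood $U\eq G(\mu_0,f,\eps)$ of~$\mu_0$ (a single-function neighbourhood, $n=1$). By the very definition of this neighbourhood recalled in Section~\ref{sec-weaktop}, every $\nu\in U$ satisfies $|i_{\nu}f-i_{\mu_0}f|<\eps$, that is, $|\vphi_f(\nu)-r_0|<\eps$, so $\vphi_f(\nu)\in\,]r_0-\eps,r_0+\eps[\,\subset H$. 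Hence $\vphi_f[U]\subset H$, which is precisely continuity of $\vphi_f$ at~$\mu_0$; since $\mu_0$ was arbitrary, $\vphi_f$ is continuous on the whole of $\RM_b(T,\cG)$.

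There is essentially no hard step here, and I do not expect a genuine obstacle: the weak topology is constructed precisely so that the signed differences $i_{\nu}f-i_{\mu_0}f$ are controlled by the basic neighbourhoods, and $f\in S(T,\cG)$ guarantees that $f$ is an admissible generating function, so one single-function neighbourhood already does the job. The only two points worth stating carefully are that $\vphi_f$ takes finite real values (which is why the inclusion $S(T,\cG)\subset MI(T,\cG,\mu)$ together with boundedness is invoked) and that the neighbourhoods $G(\mu_0,f,\eps)$ bound the signed difference $i_{\nu}f-i_{\mu_0}f$ itself, and not merely the difference of the seminorm values $s_f(\nu)$ and $s_f(\mu_0)$; the latter is immediate from the displayed formula for the base of $\cG_w(\RM_b,S(T,\cG))$ in Section~\ref{sec-weaktop}. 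Note also that continuity does not require the linearity of $\Psi(\cdot,f)$ established in Theorem~\ref{theo-psibilin-weaktop}; it is built into the topology.
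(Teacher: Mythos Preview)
Your proof is correct and follows essentially the same argument as the paper: fix a point, take a basic neighbourhood $G(\mu_0,f,\eps)$ built from the single function~$f$, and observe that its defining inequality is exactly $|\vphi_f(\nu)-\vphi_f(\mu_0)|<\eps$. The paper's version is slightly terser (it works directly with an $\eps$-interval rather than passing through a general open $H$, and it does not spell out the well-definedness of $\vphi_f$), but the idea is identical.
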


\begin{proof}
Fix some $\mu\in\RM_b$ and $x\eq\vphi_f(\mu)\in\Rbb$. Take some open neighbourhood $U\eq]x-\eps,x+\eps[$ of~$x$ and consider the open neighbourhood $V\eq G(\mu,f,\eps)$ of~$\mu$. If $\nu\in V$, then the inequality $|\vphi_f(\nu)-\vphi_f(\mu)|<\eps$ means that $\vphi_f(\nu)\in U$, i.\,e., $\vphi[V]\subset U$.
\end{proof}

\begin{proposition}\label{prop-psigsemicont-SwcH}
Let $(T,\cG)$ be a Hausdorff space and $g\in S(T,\cG)_+$.
Then the function $\psi_g:\RM_b(T,\cG)\to\Rbb$ such that $\psi_g(\mu)=\int g\,d|\mu|$ for every $\mu\in\RM_b$ is lower semicontinuous on the topological space $(\RM_b,\cG_w(\RM_b,S))$.
\end{proposition}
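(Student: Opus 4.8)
The plan is to express the total variation integral $\psi_g(\mu)=\int g\,d|\mu|$ as a supremum of functions already known to be continuous, and then invoke the standard fact that a pointwise supremum of a family of continuous functions is lower semicontinuous. The key structural observation is that for a positive symmetrizable function $g$ and a signed Radon measure $\mu$, the quantity $\int g\,d|\mu|$ admits a variational characterization of the form
\[
 \int g\,d|\mu| = \sup\left\{\int f\,d\mu \;\Big|\; f\in S(T,\cG)\wedge|f|\leqslant g\right\}.
\]
Granting this, each functional $\mu\mapsto\int f\,d\mu=\vphi_f(\mu)$ is continuous on $(\RM_b,\cG_w(\RM_b,S))$ by Lemma~\ref{lem-phifcont-SwcH}, so $\psi_g$ is a supremum over the admissible family $\{f\in S(T,\cG)\mid|f|\leqslant g\}$ of continuous functions, hence lower semicontinuous.

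The main work, therefore, lies in establishing the displayed variational identity. The inequality $\psi_g(\mu)\geqslant\sup\{\ldots\}$ is the routine direction: for any $f\in S(T,\cG)$ with $|f|\leqslant g$ one has, by the Riesz decomposition $\mu=\mu_+-(-\mu_-)$ and monotonicity of the positive integrals, $\int f\,d\mu\leqslant\int|f|\,d|\mu|\leqslant\int g\,d|\mu|=\psi_g(\mu)$, using that $|\mu|=\mu_+-\mu_-$ is a positive Radon measure and $g\geqslant|f|$. The reverse inequality is the delicate point. The natural candidate is to take $f$ to agree with $g$ on the ``positive part'' of $T$ and with $-g$ on the ``negative part'', i.e.\ informally $f=g\cdot\operatorname{sgn}(\mu)$; the obstacle is that $\operatorname{sgn}(\mu)$ is measurable but need not be symmetrizable, so the truncated product $g\cdot\operatorname{sgn}(\mu)$ may fail to lie in $S(T,\cG)$. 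I would circumvent this by using the Hahn-type decomposition supplied by the Radon structure: for given $\eps>0$, inner compact regularity of $\mu_+$ and $-\mu_-$ yields disjoint compact sets $C_+,C_-$ carrying almost all of each part, and then I would approximate the characteristic combination $\chi(C_+)-\chi(C_-)$ by a symmetrizable function $f$ with $|f|\leqslant 1$ (possible since $S(T,\cG)$ contains characteristic functions of symmetrizable sets and separates points and closed sets), so that $\int(fg)\,d\mu$ approximates $\int g\,d|\mu|$ within $\eps$.

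An alternative, cleaner route that avoids the full variational identity would be to prove lower semicontinuity directly from the definition. I would fix $\mu_0$ and a real $c<\psi_g(\mu_0)$ and produce a weak neighbourhood $V$ of $\mu_0$ on which $\psi_g>c$. Choosing $f\in S(T,\cG)$ with $|f|\leqslant g$ and $\int f\,d\mu_0>c$ (which exists precisely by the approximation argument above), the set $V=G(\mu_0,f,\de)$ with $\de\eq\int f\,d\mu_0-c>0$ works: for $\nu\in V$ one has $\psi_g(\nu)\geqslant\int f\,d\nu>\int f\,d\mu_0-\de=c$. The principal obstacle in either route is the same --- realizing a sign-change approximant inside the restricted class $S(T,\cG)$ rather than among all bounded Borel functions --- and I expect the argument to hinge on the interplay between inner compact regularity of $|\mu|$ and the fact that $S(T,\cG)$ contains characteristic functions of open and closed (hence symmetrizable) sets and separates points from closed sets in the Hausdorff space $(T,\cG)$.
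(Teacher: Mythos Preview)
Your overall strategy coincides with the paper's: both express $\psi_g$ as the pointwise supremum $\sup\{\vphi_f\mid f\in S,\ |f|\leqslant g\}$ and then invoke Lemma~\ref{lem-phifcont-SwcH} together with the stability of lower semicontinuity under suprema. The difference lies entirely in how the variational identity $\int g\,d|\mu|=\sup\{\int f\,d\mu\mid f\in S,\ |f|\leqslant g\}$ is justified. The paper does not argue by Hahn decomposition and inner regularity; instead it observes that the integral map $L:\RM_b\to S(T,\cG)^{\tu}$, $L(\mu)(f)=\int f\,d\mu$, is an isomorphism of lattice-ordered linear spaces (by an external reference), whence $L(|\mu|)=|L(\mu)|$, and then applies the Riesz-space formula $|L(\mu)|(g)=\sup\{L(\mu)f\mid f\in S,\ |f|\leqslant g\}$ for the modulus of an order-bounded functional. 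This yields the identity in one stroke with no $\eps$-approximation.

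Your hands-on route is viable but slightly rough as written. Two remarks. First, since $(T,\cG)$ is Hausdorff, compact sets are closed, so $\chi(C_+)-\chi(C_-)$ already lies in $S(T,\cG)$ and no further ``approximation'' of it is needed. Second, what you do need---and do not verify---is that the product $g\cdot\chi(C)$ lies in $S(T,\cG)$ for closed $C$, so that your candidate $f=g\chi(C_+)-g\chi(C_-)$ belongs to $S$ with $|f|\leqslant g$; this follows from the oscillation definition of $S$ by refining a $\cK$-cover for $g$ against $C$ and $T\setminus C$ (both in $\cK$, which is multiplicative), but it should be stated. Once that is in place, your computation gives $\int g\,d|\mu|-\int f\,d\mu\leqslant\|g\|_u\,|\mu|\bigl(T\setminus(C_+\cup C_-)\bigr)<\eps$, completing the hard inequality. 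The paper's abstract route buys brevity and avoids checking multiplicative closure properties of $S$; your route buys self-containment at the cost of those checks.
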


\begin{proof}
Consider the mapping $L:\RM_b\to S(T,\cG)^\tu$ such that $L(\mu)(f)\eq\int f\,d\mu$ for every $\mu\in\RM_b$ and $f\in S(T,\cG)$.
By virtue of Corollary~5 to Theorem~2 (3.6.4) from~\cite{ZakhRodi2018SFM2} $L$ is an isomorphism of the given lattice-ordered linear spaces. Therefore $L(|\mu|)=|L(\mu)|$. According to Corollary~1 to Proposition~2 (3.6.1) from~\cite{ZakhRodi2018SFM2} $|L(\mu)|(g)=\sup\{L(\mu)f\mid f\in S \wedge |f|\leqslant g\}$. This means that $\psi_g=\sup\{\vphi_f\mid f\in S \wedge |f|\leqslant g \}$ where the supremum takes in the lattice-ordered linear space of all real-valued functions on~$\RM_b$. By Lemma~\ref{lem-phifcont-SwcH} the function~$\vphi_f$ is lower semicontinuous. Consequently, $\psi_g$ is lower semicontinuous as the supremum of the family of lower semicontinuous functions (see, e.g.~assertion~(7) of Proposition~1~(2.3.8) in~\cite{ZakhRodi2018SFM2}).
\end{proof}

\begin{corollary}\label{cor-1-prop-psigsemicont-SwcH}
Let $(T,\cG)$ be a Hausdorff space, $G\in\cG$, $C\in\cC$, and $C\subset G$.
Then the function $\chi_1:\RM_b(T,\cG)\to\Rbb$ such that $\chi_1(\mu)=|\mu|(G\razn C)$ for every $\mu\in\RM_b$ is lower semicontinuous on the topological space $(\RM_b,\cG_w(\RM_b,S))$.
\end{corollary}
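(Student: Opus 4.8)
The plan is to realize $\chi_1$ as a special case of the function $\psi_g$ from Proposition~\ref{prop-psigsemicont-SwcH} and then invoke that proposition directly. The only genuine work is to exhibit an appropriate nonnegative symmetrizable function $g$ whose integral against $|\mu|$ reproduces $|\mu|(G\razn C)$, and the natural candidate is $g\eq\chi(G\razn C)$.

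First I would check that $G\razn C$ is open, so that its characteristic function lies in $S(T,\cG)$. Since $(T,\cG)$ is Hausdorff and $C$ is compact, the set $C$ is closed, i.e. $C\in\cF$, whence $T\razn C\in\cG$. Therefore $G\razn C=G\cap(T\razn C)\in\cG$ as the intersection of two open sets. By the properties of $S(T,\cG)$ recorded in Section~\ref{sec-prelim} (the space contains the characteristic function of every open subset), it follows that $g\eq\chi(G\razn C)\in S(T,\cG)_+$.

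Next I would identify the integral. As in the proof of Lemma~\ref{lem-separS-weaktop}, Lemma~1~(3.3.2) from~\cite{ZakhRodi2018SFM2} gives $\int\chi(P)\,d\lambda=\lambda(P)$ for any positive bounded Radon measure $\lambda$; applying this with $\lambda\eq|\mu|$ and $P\eq G\razn C$ yields $\psi_g(\mu)=\int g\,d|\mu|=|\mu|(G\razn C)=\chi_1(\mu)$ for every $\mu\in\RM_b$, that is, $\chi_1=\psi_g$.

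Finally, applying Proposition~\ref{prop-psigsemicont-SwcH} to this $g\in S(T,\cG)_+$ gives that $\psi_g=\chi_1$ is lower semicontinuous on $(\RM_b,\cG_w(\RM_b,S))$, which is the assertion. I do not expect any serious obstacle: the single point requiring care is the openness of $G\razn C$, which is precisely where the Hausdorff hypothesis is used (it forces the compact set $C$ to be closed); without it the function $\chi(G\razn C)$ need not belong to $S(T,\cG)$ and the reduction to Proposition~\ref{prop-psigsemicont-SwcH} would break down.
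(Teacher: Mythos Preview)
Your proposal is correct and follows exactly the paper's approach: apply Proposition~\ref{prop-psigsemicont-SwcH} with $g\eq\chi(G\razn C)\in S(T,\cG)_+$, noting that $\psi_g=\chi_1$. You simply supply the details the paper leaves implicit (that $G\razn C$ is open because $C$ is closed in the Hausdorff space, hence $\chi(G\razn C)\in S(T,\cG)_+$, and that integrating this characteristic function against $|\mu|$ gives $|\mu|(G\razn C)$).
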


\begin{proof}
Apply Proposition~\ref{prop-psigsemicont-SwcH} to $g\eq\chi(G\razn C)\in S(T,\cG)$ and $\psi_g\eq\chi_1$.
\end{proof}

\begin{corollary}\label{cor-2-prop-psigsemicont-SwcH}
Let $(T,\cG)$ be a Hausdorff space.
Then the function $\chi_2:\RM_b\to\Rbb$ such that $\chi_2(\mu)=|\mu|(T)$ for every $\mu\in\RM_b(T,\cG)$ is lower semicontinuous on the topological space $(\RM_b(T,\cG),\cG_w(\RM_b(T,\cG),S(T,\cG)))$.
\end{corollary}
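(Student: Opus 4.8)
The plan is to obtain Corollary~\ref{cor-2-prop-psigsemicont-SwcH} as a direct specialization of Proposition~\ref{prop-psigsemicont-SwcH}, exactly mirroring how Corollary~\ref{cor-1-prop-psigsemicont-SwcH} was derived. The key observation is that the unit function $\edin$ belongs to $S(T,\cG)_+$: indeed, the excerpt already records that $S(T,\cG)$ contains $\edin$, and since $\edin\geqslant\nul$ it lies in $S(T,\cG)_+$. The target function $\chi_2$ is then literally the function $\psi_g$ from the Proposition for this particular choice of $g$.

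First I would set $g\eq\edin\in S(T,\cG)_+$ and unwind the definition of $\psi_g$ from Proposition~\ref{prop-psigsemicont-SwcH}, namely $\psi_g(\mu)=\int g\,d|\mu|$. For $g=\edin$ this integral is $\int\edin\,d|\mu|=|\mu|(T)$, using that the integral of the unit function against a positive bounded Radon measure equals the total mass of that measure (here $|\mu|\in\RM_b(T,\cG)_+$ by the Riesz decomposition facts recalled in Section~\ref{sec-prelim}). Hence $\psi_{\edin}=\chi_2$ as functions on $\RM_b(T,\cG)$.

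Second, with this identification in hand, Proposition~\ref{prop-psigsemicont-SwcH} applied to $g=\edin$ immediately yields that $\psi_{\edin}=\chi_2$ is lower semicontinuous on $(\RM_b,\cG_w(\RM_b,S))$, which is the desired conclusion. The proof is therefore a two-line invocation, structurally identical to the proof of Corollary~\ref{cor-1-prop-psigsemicont-SwcH} but with the set $G\razn C$ replaced by the whole space $T$ and the characteristic function $\chi(G\razn C)$ replaced by $\edin=\chi(T)$.

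There is essentially no obstacle here; the only point requiring a moment's care is the bookkeeping that $\edin\in S(T,\cG)_+$ and that $\int\edin\,d|\mu|=|\mu|(T)$, both of which are already established in the preliminaries. The proof I would write is simply: ``Apply Proposition~\ref{prop-psigsemicont-SwcH} to $g\eq\edin\in S(T,\cG)_+$; since $\psi_{\edin}(\mu)=\int\edin\,d|\mu|=|\mu|(T)=\chi_2(\mu)$, the function $\chi_2$ is lower semicontinuous.''
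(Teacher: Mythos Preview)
Your proof is correct and essentially the same as the paper's; the only cosmetic difference is that the paper obtains Corollary~\ref{cor-2-prop-psigsemicont-SwcH} by invoking Corollary~\ref{cor-1-prop-psigsemicont-SwcH} with $G\eq T$ and $C\eq\vrn$, whereas you go directly to Proposition~\ref{prop-psigsemicont-SwcH} with $g\eq\edin$. Since $\chi(T\razn\vrn)=\chi(T)=\edin$, these two routes are literally the same specialization, just cited one level apart.
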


\begin{proof}
Apply the previous Corollary to $G\eq T$ and $C\eq\vrn$.
\end{proof}

\begin{theorem}\label{theo-alzetabar-SwcH}
Let $(T,\cG)$ be a Hausdorff space, $M$ be a subset of the set $\RM_b(T,\cG)$, and $\cl M$ be the closure of~$M$ in the weak topology $\cG_w(\RM_b,S)$. Then
\begin{enumerate}
\item[\textup{(i)}]
 the following properties are equivalent:
 \begin{enumerate}
 \item[$(\al^{\zeta}_{var})$] 
  for any $G\in\cG$ and any $\eps>0$ there is a compact set~$C\subset G$ such that $|\mu|(G\razn C)<\eps$ for any $\mu\in M$;
 \item[$(\bar\al^{\zeta}_{var})$] 
  for any $G\in\cG$ and any $\eps>0$ there is a compact set~$C\subset G$ such that $|\nu|(G\razn C)<\eps$ 
  for any $\nu\in\cl M$;
\end{enumerate}
\item[\textup{(ii)}]  
the following properties are equivalent:
 \begin{enumerate}
 \item[$(\beta_{var})$]
  $\sup\scol{|\mu| T}{\mu\in M}\in\Rbb_+$;
 \item[$(\bar\beta_{var})$]
  $\sup\scol{|\nu| T}{\nu\in\cl M}\in\Rbb_+$.
 \end{enumerate}
\end{enumerate}
\end{theorem}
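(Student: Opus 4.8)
The plan is to reduce each of the two equivalences to its nontrivial half and then to deduce that half directly from the lower semicontinuity established in Corollaries~\ref{cor-1-prop-psigsemicont-SwcH} and~\ref{cor-2-prop-psigsemicont-SwcH}. In both~(i) and~(ii) the implications $(\bar\al^{\zeta}_{var})\vd(\al^{\zeta}_{var})$ and $(\bar\beta_{var})\vd(\beta_{var})$ are immediate from the inclusion $M\subset\cl M$, since a condition holding for every $\nu\in\cl M$ holds a fortiori for every $\mu\in M$. Thus the whole content lies in transferring a bound valid on~$M$ up to its weak closure~$\cl M$, and the mechanism for this is the standard fact that a lower semicontinuous function~$\chi$ on a topological space has closed sublevel sets $\{\mu\mid\chi(\mu)\leqslant c\}$.

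For part~(i), to prove $(\al^{\zeta}_{var})\vd(\bar\al^{\zeta}_{var})$ I would fix $G\in\cG$ and $\eps>0$ and first apply $(\al^{\zeta}_{var})$ with $\eps/2$ in place of~$\eps$: this yields a compact set $C\subset G$ with $|\mu|(G\razn C)<\eps/2$ for every $\mu\in M$. Writing $\chi_1(\mu)\eq|\mu|(G\razn C)$ as in Corollary~\ref{cor-1-prop-psigsemicont-SwcH}, this says $M\subset\{\mu\mid\chi_1(\mu)\leqslant\eps/2\}$. Since $\chi_1$ is lower semicontinuous on $(\RM_b,\cG_w(\RM_b,S))$, this sublevel set is closed, hence contains~$\cl M$; therefore $|\nu|(G\razn C)\leqslant\eps/2<\eps$ for every $\nu\in\cl M$, which is exactly~$(\bar\al^{\zeta}_{var})$. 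The passage to $\eps/2$ is precisely what converts the non-strict sublevel bound into the strict inequality demanded in~$(\bar\al^{\zeta}_{var})$.

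For part~(ii), to prove $(\beta_{var})\vd(\bar\beta_{var})$ I would put $b\eq\sup\scol{|\mu|T}{\mu\in M}\in\Rbb_+$, so that $M\subset\{\mu\mid\chi_2(\mu)\leqslant b\}$ for the function $\chi_2(\mu)\eq|\mu|(T)$ of Corollary~\ref{cor-2-prop-psigsemicont-SwcH}. By the lower semicontinuity of~$\chi_2$ this sublevel set is again closed, so $\cl M$ is contained in it, giving $|\nu|T\leqslant b$ for every $\nu\in\cl M$ and hence $\sup\scol{|\nu|T}{\nu\in\cl M}\leqslant b<\infty$, i.e.~$(\bar\beta_{var})$. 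I expect no genuine obstacle here once the two semicontinuity corollaries are in hand: the only points needing care are the elementary reduction to closed sublevel sets and, in part~(i), the $\eps/2$ adjustment. The essential conceptual content is simply that lower semicontinuity of the variation functionals $\mu\mapsto|\mu|(G\razn C)$ and $\mu\mapsto|\mu|(T)$ is exactly what propagates the local tightness bound and the boundedness bound from~$M$ to its weak closure.
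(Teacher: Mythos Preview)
Your proposal is correct and follows essentially the same approach as the paper's own proof: both deduce the nontrivial implications from the lower semicontinuity results of Corollaries~\ref{cor-1-prop-psigsemicont-SwcH} and~\ref{cor-2-prop-psigsemicont-SwcH} by passing to closed sublevel sets, with the same $\eps/2$ adjustment in part~(i) to recover the strict inequality.
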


\begin{proof}
(i)
$(\al^{\zeta}_{var})\vd(\bar\al^{\zeta}_{var})$.
Let $G\in\cG$ and $\eps>0$. By condition there is a compact set $C\subset G$ such that $\sup\scol{\chi_1(\mu)}{\mu\in M}<\eps/2$ for the function~$\chi_1$ from Corollary~\ref{cor-1-prop-psigsemicont-SwcH} to Proposition~\ref{prop-psigsemicont-SwcH}. Consider the set $N\eq\preim{\chi_1}{]-\infty,\eps/2]}$. It follows from the last inequality that $M\subset N$. By the mentioned Corollary the set~$N$ is closed. Therefore $\cl M\subset N$, i.\,e., $|\nu|(G\razn C)<\eps$ for every $\nu\in\cl M$.

$(\bar\al^{\zeta}_{var})\vd(\al^{\zeta}_{var})$.
This deduction is evident.

(ii)
This equivalence is checked quite similarly to the equivalence~(i) by means of Corollary~\ref{cor-2-prop-psigsemicont-SwcH} to Proposition~\ref{prop-psigsemicont-SwcH}.
\end{proof}

\begin{theorem}\label{theo-1prime-SwcH}
Let $(T,\cG)$ be a Hausdorff space, $M$ be a subset of the set $\RM_b$, and $\cl M$ be the closure of~$M$ in the weak topology $\cG_w(\RM_b,S)$. Suppose that~$M$ has properties~$(\al^{\zeta}_{var})$ and~$(\beta_{var})$.
Then $\cl M$ is compact in the induced weak topology $\cG_w(\RM_b,S)|\cl M$.
\end{theorem}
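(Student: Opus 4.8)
The plan is to reduce the general case to the positive criterion of Theorem~\ref{theo-1-SwcH} by means of the Riesz decomposition, and then to transport compactness back to $M$ through a continuous algebraic operation. First I would split every measure $\mu\in M$ as $\mu=\mu_+ + \mu_-$, so that $\mu_+$ and $-\mu_-$ are positive bounded Radon measures with $\mu=\mu_+ - (-\mu_-)$ and $|\mu|=\mu_+ + (-\mu_-)$ (see the preliminaries on the Riesz decomposition). Setting $M_+\eq\{\mu_+\mid\mu\in M\}$ and $M_-\eq\{-\mu_-\mid\mu\in M\}$, both are subsets of $\RM_b(T,\cG)_+$.

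Next I would check that $M_+$ and $M_-$ inherit properties $(\al^{\zeta})$ and $(\beta)$ from the hypotheses $(\al^{\zeta}_{var})$ and $(\beta_{var})$ on $M$. This is immediate from the domination $\mu_+(B)\leqslant|\mu|(B)$ and $(-\mu_-)(B)\leqslant|\mu|(B)$ for every Borel set $B$, which holds because $|\mu|=\mu_+ + (-\mu_-)$ is a sum of positive measures: for given $G\in\cG$ and $\eps>0$ the same compact set $C\subset G$ furnished by $(\al^{\zeta}_{var})$ serves simultaneously for $M_+$ and $M_-$, and $\sup\scol{\mu_+ T}{\mu\in M}\leqslant\sup\scol{|\mu| T}{\mu\in M}<\infty$ gives $(\beta)$, and likewise for $M_-$. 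Applying the implication $(2)\vd(1)$ of Theorem~\ref{theo-1-SwcH} to each of $M_+$ and $M_-$, I conclude that the closures $\cl M_+$ and $\cl M_-$ in $\cG_w(\RM_b,S)$ are compact.

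The key step is to pull these two compactness facts back to $M$. I would consider the subtraction map $s:\RM_b\times\RM_b\to\RM_b$, $s(\kap,\lm)\eq\kap-\lm$. By assertion~2 of Theorem~\ref{theo-psibilin-weaktop} one has $i_{\kap-\lm}f=i_{\kap}f-i_{\lm}f$ for every $f\in S(T,\cG)$; since the weak topology on the target is generated precisely by the functionals $\mu\mapsto i_{\mu}f$, the map $s$ is continuous when the source carries the product weak topology. The finite product $\cl M_+\times\cl M_-$ is compact, so its continuous image $K\eq s[\cl M_+\times\cl M_-]$ is compact, and because $\cG_w(\RM_b,S)$ is Hausdorff (Corollary to Lemma~\ref{lem-separS-weaktop}) the set $K$ is closed.

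Finally, each $\mu\in M$ equals $s(\mu_+,-\mu_-)$ with $(\mu_+,-\mu_-)\in M_+\times M_-\subset\cl M_+\times\cl M_-$, so $M\subset K$; closedness of $K$ then yields $\cl M\subset K$. As a closed subset of the compact set $K$, $\cl M$ is compact, which is the desired conclusion. The one point requiring genuine care---hence the main obstacle---is the continuity of $s$ together with the Hausdorffness of the weak topology, since these are exactly what convert the \emph{compact image} of $\cl M_+\times\cl M_-$ into a \emph{closed set containing} $M$; everything else is routine domination of $\mu_+,-\mu_-$ by $|\mu|$ and an appeal to the already established positive criterion.
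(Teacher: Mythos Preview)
Your argument is correct. Both proofs rest on the same core idea---apply the Riesz decomposition and invoke the positive criterion of Theorem~\ref{theo-1-SwcH}---but the execution differs in two respects worth noting.

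First, the paper begins by passing to $N\eq\cl M$ and invoking Theorem~\ref{theo-alzetabar-SwcH} (which in turn relies on the lower-semicontinuity machinery of Proposition~\ref{prop-psigsemicont-SwcH} and its corollaries) to show that $N$ itself enjoys $(\bar\al^{\zeta}_{var})$ and $(\bar\beta_{var})$; only then does it take positive and negative parts of measures in~$N$. You instead take the Riesz parts directly from~$M$, so the domination $\mu_+\leqslant|\mu|$ immediately yields $(\al^{\zeta})$ and $(\beta)$ for $M_+$ and $M_-$ without any appeal to Theorem~\ref{theo-alzetabar-SwcH}. This makes your route genuinely more self-contained.

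Second, to recombine the two compact pieces the paper runs an explicit double subnet extraction: given a net in~$N$, choose a subnet along which the positive parts converge in $\cl L'$, then a further subnet along which the negative parts converge in $\cl L''$, and add the limits. You replace this by the single observation that subtraction $s:\RM_b\times\RM_b\to\RM_b$ is $\cG_w$-continuous (immediate from Theorem~\ref{theo-psibilin-weaktop}(2)), so $K\eq s[\cl M_+\times\cl M_-]$ is compact, hence closed by Hausdorffness, and contains~$M$. This is the cleaner packaging of the same compactness; the paper's subnet argument is essentially an unfolded proof of the fact that the continuous image of a product of compacts is compact.
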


\begin{proof}
By Theorem~\ref{theo-alzetabar-SwcH} The set $N\eq\cl M$ has properties~$(\bar\al^{\zeta}_{var})$ and~$(\bar\beta_{var})$. Consider the subsets $L'\eq\{\nu^+\in\RM_b(T,\cG)_+\mid\nu\in N\}$ and $L''\eq\{-\nu^-\in\RM_b(T,\cG)_+\mid\nu\in N\}$ of positive bounded Radon measures, where $\nu^+\eq\nu\vee\nul$, $\nu^-\eq\nu\wedge\nul$. Since $\nu^+\leqslant|\nu|$, properties $(\bar\al^{\zeta}_{var})$ and $(\bar\beta_{var})$ for~$N$ imply properties~$(\al^{\zeta})$ and~$(\beta)$ for~$L'$. By Theorem~\ref{theo-1-SwcH} $N'\eq\cl L'$ is compact in the induced weak topology $\cG_w(\RM_b,S)|N'$. The same is valid for~$N''\eq\cl L''$.

Consider some net $s\eq\net{\nu_{\kap}\in N}{\kap\in K}$. Owing to the compactness of~$N'$ Theorem~2 from~\cite[ch.\,5]{Kelley1957} guarantees that for the net $s'\eq\net{\nu^+_{\kap}\in L'}{\kap\in K}$ there are an upward directed collection $\scol{\kap_i\in K}{i\in I}$ and a measure $\nu'\in N'$ such that the net 
$t\eq\net{\nu^+_{\kap_i}\in L'}{i\in I}$ is a subnet of~$s'$ (see~1.1.15 in~\cite{ZakhRodi2018SFM1}) and $\nu'=\lim t$.
Similarly, owing to the compactness of~$N''$ for the net $t''\eq\net{-\nu^-_{\kap_i}\in L''}{i\in I}$ there are an upward directed collection $\scol{i_j\in I}{j\in J}$ and a measure $\nu''\in N''$ such that the net $u\eq\bigl(-\nu^-_{\kap_{i_j}}\in L''\mid j\in J\bigr)$ is a subnet of~$t''$ and $\nu''=\lim u$.
Consider the measure $\nu\eq\nu'-\nu''$.

Check that $\nu'=\lim\bigl(\nu^+_{\kap_{i_j}}\mid j\in J\bigr)$. Take any neighbourhood~$U$ of~$\nu'$. Since $\nu'=\lim t$, there is $i_0\in I$ such that $i\geqslant i_0$ implies $\nu^+_{\kap_{i}}\in U$. Since~$u$ is a subnet of~$t''$, for~$i_0$ there is $j_0\in J$ such that $j\geqslant j_0$ implies $i_j\geqslant i_0$. Therefore $j\geqslant j_0$ implies $\nu^+_{\kap_{i_j}}\in U$. This means the necessary equality for~$\nu'$.

Now we get
$\nu=\nu'+(-\nu'')=\lim\bigl(\nu^+_{\kap_{i_j}}+\nu^-_{\kap_{i_j}}\mid j\in J\bigr)=\lim\bigl(\nu_{\kap_{i_j}}\mid j\in J\bigr)$. Since $\nu_{\kap_{i_j}}\in N$ and~$N$ is closed, we conclude that $\nu\in N$. It is easily seen that $v\eq\bigl(\nu_{\kap_{i_j}}\mid j\in J\bigr)$ is a subnet of~$s$. Thus, the net~$s$ has the subnet~$v$ converging to $\nu\in N$. Hence, by the mentioned compactness criterion from~\cite{Kelley1957} $N$ is weakly compact.
\end{proof}

\section{The $C_b(T,\cG)$-weak compactness of sets of bounded Radon measures on a Tychonoff space}\label{sec-CwcTplus}

In conclusion, using the means elaborated above, we consider the $C_b(T,\cG)$-weak compactness for a Tychonoff space.

In the paper~\cite{Prokhorov1956} Yu.\,V.~Prokhorov proved (1956) the remarkable theorem giving some simple criterion for the weak compactness of a closed subset~$M$ of the set $\RM_b(T,\cG)$ on a complete separable metric space $(T,\cG)$ with respect to the weak topology induced on $\RM_b(T,\cG)$ by the family $C_b(T,\cG)$ (see, e.\,g., \cite[Theorems~5.1 and~5.2]{Billingsley1999} and~\cite[437O-437V]{Fremlin-new4}). This criterion used the Prokhorov uniform tightness property~$(\al^{\pi})$ from section~\ref{sec-prsubs}.

Note that much earlier (in 1943) A.\,D.~Alexandroff in fundamental paper~\cite{AlexandrovAD1943} proved some criterion for the weak compactness of a closed subset~$M$ of the set of all bounded regular measures on a locally compact metric space with a countable base~\cite[\S\,20, Th.\,4]{AlexandrovAD1943}. This criterion used the \emph{Alexandroff eluding load property} of~$M$.

Soon after~\cite{Prokhorov1956} it was noticed that Prokhorov's conditions are sufficient for the $C_b(T,\cG)$-weak compactness of a closed set~$M$ on a Tychonoff topological space $(T,\cG)$ (see~\cite{LeCam1957}, \cite{Vara1961}, and~\cite[IX.5.5]{Bourbaki(VI9)1969}). According to~\cite{Vara1961} they are not necessary even for $M\subset\RM_b(T,\cG)_+$.

An original criterion for the weak compactness of a closed subset~$M$ of the set $\RM_b(T,\cG)_+$ on a Tychonoff space using neither any modification of the Prokhorov property nor any modification of the Alexandroff property is proved in~\cite{Topsoe1970,Topsoe1974}.
Some integral terms criterion for a Tychonoff space is presented in~\cite[XI.1.8]{Jacobs1978}.

Below we present some sufficient condition for the $C_b(T,\cG)$-weak compactness of a set $M\subset\RM_b(T,\cG)_+$ on a Tychonoff topological space using some weaker modification of the Prokhorov uniform tightness condition and formulated without any secondary terms such as functions, integrals, and so on.

This modification is the following:
\begin{enumerate}
\item[$(\al^{z})$]
(\emph{the tail tightness property})
for any net $\net{\mu_j\in M}{j\in J}$ there exists a subnet $\net{\mu_{j_i}}{i\in I}$ such that for any $\eps>0$ there is a compact set~$C$ and an index~$i_0\in I$ such that $\mu_{j_i}(T\razn C)<\eps$ for any $i\geqslant i_0$.
\end{enumerate}

This sufficient condition is valid only if the $C_b(T,\cG)$-weak topology for a Tychonoff space $(T,\cG)$ is Hausdorff. The proof of this fact is much more delicate than the proof of Hausdorffness for the $S(T,\cG)$-weak topology (see Lemma~\ref{lem-separS-weaktop} in section~\ref{sec-weaktop}).  We propose a proof of this fact which is completely different from the proof in~\cite[IX.5.3]{Bourbaki(VI9)1969}.

\begin{lemma}\label{lem-seqchiC-weaktop}
	Let $(T,\cG)$ be a Tychonoff space, $\mu$ be a bounded positive Radon measure, and $C$ be a compact set. Then
	\begin{enumerate}
		\item[\textup{(i)}] 
		there is some net $u\eq\net{f_k\in C_b(T,\cG)_+}{k\in K}\downarrow$ such that $\chi(C)\leqslant f_k$ and $\chi(C)=\plim u$ in $F(T)$;
		\item[\textup{(ii)}] 
		$\net{\int f_k\,d\mu}{k\in K}\downarrow\mu C$.
	\end{enumerate}
\end{lemma}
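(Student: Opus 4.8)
The plan is to construct the net directly inside $C_b(T,\cG)$, using complete regularity of the Tychonoff space $T$ for the construction in~(i) and the inner compact regularity of $\mu$ for the integral identity in~(ii).

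For part~(i) I would take as index set the family $K\eq\{f\in C_b(T,\cG)_+\mid\chi(C)\leqslant f\leqslant\edin\}$, order it by the reverse pointwise order (declaring $k_1\leqslant k_2$ when $f_{k_1}\geqslant f_{k_2}$), and let $u$ be the identity net, $f_k\eq k$ for $k\in K$. Every $f_k$ is then continuous, takes values in $[0,1]$, equals $1$ on $C$, and in particular satisfies $\chi(C)\leqslant f_k$. The family $K$ is downward directed because $C_b(T,\cG)$ is a lattice: for $f_1,f_2\in K$ the meet $f_1\wedge f_2$ is again continuous, lies in $[0,1]$, and equals $1$ on~$C$, hence belongs to $K$ and is a common lower bound. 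Thus $u$ is a decreasing net with $\chi(C)\leqslant f_k$, which settles the monotonicity and the domination.

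It remains to check $\chi(C)=\plim u$. At a point $t\in C$ every $f\in K$ has $f(t)=1$, so the net is constantly $1=\chi(C)(t)$ there. At a point $t\notin C$ the set $C$ is closed (being compact in a Hausdorff space), so complete regularity produces $h\in C_b(T,\cG)$ with values in $[0,1]$, $h=1$ on $C$, and $h(t)=0$; hence $h\in K$. Since $u$ decreases, its value at $t$ tends to $\inf\scol{f(t)}{f\in K}$, and the existence of such an $h$ forces this infimum to be $0=\chi(C)(t)$. This gives $\chi(C)=\plim u$ in~$F(T)$.

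For part~(ii), since $\mu\geqslant\nul$ and $k_1\leqslant k_2$ entails $f_{k_1}\geqslant f_{k_2}$, monotonicity of the integral makes the real net $\net{\int f_k\,d\mu}{k\in K}$ decreasing; and $f_k\geqslant\chi(C)$ gives the lower bound $\int f_k\,d\mu\geqslant\int\chi(C)\,d\mu=\mu C$. A decreasing net of reals bounded below converges to its infimum, so it suffices to find, for each $\eps>0$, some $f\in K$ with $\int f\,d\mu<\mu C+\eps$. Applying the inner compact regularity from the definition of a Radon measure to the Borel set $T\razn C$ yields a compact $C'\subset T\razn C$ with $\mu C'>\mu(T\razn C)-\eps$; then $G\eq T\razn C'$ is open, contains $C$, and (using $\mu T<\infty$) satisfies $\mu G=\mu T-\mu C'<\mu C+\eps$. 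As $C$ is compact and $T\razn G$ is a disjoint closed set, complete regularity (in its form separating a compact set from a disjoint closed set) gives $f\in C_b(T,\cG)$ with values in $[0,1]$, $f=1$ on $C$, and $f=0$ on $T\razn G$; then $f\in K$ and $f\leqslant\chi(G)$, so $\int f\,d\mu\leqslant\int\chi(G)\,d\mu=\mu G<\mu C+\eps$. Hence the infimum equals $\mu C$ and $\net{\int f_k\,d\mu}{k\in K}\downarrow\mu C$.

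The step I expect to be the main obstacle is the passage, inside part~(ii), from the inner compact regularity that \emph{defines} a Radon measure here to the outer regularity by open sets: this is the complementation argument just sketched and is valid precisely because $\mu$ is finite, so it must be phrased carefully. A secondary delicate point is the separation of the compact set $C$ from the disjoint closed set $T\razn G$ by a function of $C_b(T,\cG)$; in a Tychonoff space this is obtained by covering $C$ with finitely many completely regular separating functions and taking their maximum, and it is the one place where compactness of $C$, not merely its closedness, is used.
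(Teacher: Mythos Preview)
Your proof is correct but takes a markedly different route from the paper's. For part~(i) the paper simply cites an ``enveloping from above'' proposition from the authors' monograph, whereas you construct the canonical net $K=\{f\in C_b(T,\cG)_+\mid\chi(C)\leqslant f\leqslant\edin\}$ by hand and verify the pointwise limit directly from complete regularity. The real divergence is in part~(ii). The paper routes the argument through its abstract Young--Daniell extension machinery: it shows that the restricted integral $\vphi\eq\Lambda(\mu)|C_b$ is exact (in particular pointwise continuous on nets), identifies the Young--Daniell extension~$\check\vphi_S$ with $\Lambda(\mu)$ on $S(T,\cG)$ via a uniqueness theorem for $\si$-exact extensions, and then invokes a lemma on the first-step extension~$\phibar$ on $\Inf(T,C_b)$ to obtain $\net{i_\mu f_k}{k\in K}\downarrow\phibar\chi(C)=\mu C$. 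You bypass all of this by deriving outer open regularity at~$C$ from inner compact regularity on $T\razn C$ (crucially using $\mu T<\infty$), and then producing an Urysohn-type $f\in K$ supported in an open $G\supset C$ of small measure. Your argument is entirely self-contained, using only the definition of a Radon measure and the standard compact-versus-closed separation in Tychonoff spaces; the price is that it establishes~(ii) only for the specific net you build, whereas the paper's pointwise-continuity argument yields~(ii) for \emph{every} decreasing net in $C_b(T,\cG)_+$ with pointwise limit $\chi(C)$ --- a uniformity that the phrasing of the subsequent Corollary (``for any net from assertion~(i)'') appears to rely on.
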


\begin{proof}
	By assertion~2 of Proposition~2~(3.6.2) from~\cite{ZakhRodi2018SFM2} the family $C_b(T,\cG)$ \emph{envelopes from above} the function $h\eq\chi(C)$, i.\,e., there is a decreasing net $u\eq\net{f_k\in C_b}{k\in K}$ with some upward directed set~$K$ such that $h\leqslant f_k$ and $h=\plim u$ in $F(T)$. Hence, $h\in\Inf(T,C_b(T,\cG))$. 
	Check that $\net{i_{\mu}f_k}{k\in K}\downarrow\mu C$.
	
	By Proposition~5~(3.6.1) from~\cite{ZakhRodi2018SFM2} the integral functional $\Lambda(\mu)$ is $\si$-exact on the lattice-ordered linear space $S(T,\cG)$. Hence, the induced integral functional $\vphi\eq i_{\mu}=\Lambda(\mu)|C_b(T,\cG)$ is $\si$-exact and, in particular, quite locally tight.
	By Corollary~1 to Theorem~5~(3.6.2) from~\cite{ZakhRodi2018SFM2} $\vphi$ is exact and, in particular, pointwise continuous.
	
	Consider the Young\,--\,Daniell extension $\psi\eq\check{\vphi}_S$ of the functional~$\vphi$ on the family $\Sm(T,\cG,C_b(T,\cG))=S(T,\cG)$ constructed in~\cite[3.6.2]{ZakhRodi2018SFM2} (see also sections~7 and~8 in~\cite{Zakharov2005}). By Proposition~5~(3.6.2) from this book $\psi$ is pointwise $\si$-continuous and by Theorem~2~(3.6.2) $\psi$ is quite locally tight. So~$\psi$ is $\si$-exact. But the functional~$\Lambda(\mu)$ is also the $\si$-exact extension of~$\vphi$. Since by Proposition~6~(3.6.2) from~\cite{ZakhRodi2018SFM2} the $\si$-exact extension is unique, we conclude that $\Lambda(\mu)=\psi$.
	
	According to~\cite[3.6.2]{ZakhRodi2018SFM2} $\psi$ is an extension of the first-step Young\,--\,Daniell extension $\phibar:\Inf(T,C_b(T,\cG))\to\Rbb$. Since $h\in\Inf$, we have $\psi h=\phibar h$. Besides, $C_b\subset\Inf$ and $u\downarrow h$ in~$F(T)$. Therefore by Lemma~5~(3.6.2) $\net{\phibar f_k}{k\in K}\downarrow \phibar h$. Hence,
	$\phibar f_k=\vphi f_k=i_{\mu} f_k$ and $\phibar h=\psi h=\Lambda(\mu)h=\mu C$ implies $\net{i_{\mu}f_k}{k\in K}\downarrow\mu C$.
\end{proof}

\begin{corollary}\label{cor-1-lem-seqchiC-weaktop}
	Let $(T,\cG)$ be a Tychonoff space, $\mu$ be a bounded positive Radon measure, and $C$ be a compact set. Then
	\begin{enumerate}
		\item[\textup{(iii)}] 
		there is some net $v\eq\net{g_k\in C_b(T,\cG)_+}{k\in K}\uparrow$ such that $g_k\leqslant\chi(T\razn C)$ and 
		$\chi(T\razn C)=\plim v$ in $F(T)$;
		\item[\textup{(iv)}] 
		$\net{\int g_k\,d\mu}{k\in K}\uparrow\mu(T\razn C)$;
	\item[\textup{(v)}] 
$\mu(T\razn C)=\sup\{\int g\,d\mu\mid g\in C_b(T,\cG)_+ \wedge g\leqslant\chi(T\razn C)\}$
	\end{enumerate}
\end{corollary}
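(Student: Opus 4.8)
The plan is to derive Corollary~\ref{cor-1-lem-seqchiC-weaktop} directly from Lemma~\ref{lem-seqchiC-weaktop} by a complementation argument, passing from the net enveloping $\chi(C)$ from above to a net approximating $\chi(T\razn C)$ from below. First I would take the net $u\eq\net{f_k\in C_b(T,\cG)_+}{k\in K}\downarrow$ supplied by part~(i) of the lemma, satisfying $\chi(C)\leqslant f_k$ and $\chi(C)=\plim u$, together with part~(ii) giving $\net{\int f_k\,d\mu}{k\in K}\downarrow\mu C$. Since the lemma guarantees $f_k\geqslant\chi(C)\geqslant\nul$ and $\chi(C)\leqslant\edin$, I would like to define $g_k\eq\edin-f_k$; however $f_k$ need not be bounded by $\edin$, so I would first replace $f_k$ by $f_k\wedge\edin$, which is still in $C_b(T,\cG)_+$, still dominates $\chi(C)$ (as $\chi(C)\leqslant\edin$), still decreases, and still converges pointwise to $\chi(C)$. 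This technical truncation is the one point requiring care.

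Having arranged $\chi(C)\leqslant f_k\leqslant\edin$ with $u\downarrow\chi(C)$, I would set $g_k\eq\edin-f_k\in C_b(T,\cG)_+$. Then the net $v\eq\net{g_k}{k\in K}$ increases (as $f_k$ decreases), satisfies $g_k=\edin-f_k\leqslant\edin-\chi(C)=\chi(T\razn C)$, and $\plim v=\edin-\plim u=\edin-\chi(C)=\chi(T\razn C)$ in $F(T)$. This yields conclusion~(iii). For~(iv) I would use linearity of the integral, $\int g_k\,d\mu=\mu T-\int f_k\,d\mu$; since $\net{\int f_k\,d\mu}{k\in K}\downarrow\mu C$ by part~(ii) of the lemma (this remains valid after truncation, since the truncated net still satisfies both hypotheses of Lemma~\ref{lem-seqchiC-weaktop}), the net $\net{\int g_k\,d\mu}{k\in K}$ increases to $\mu T-\mu C=\mu(T\razn C)$, which is conclusion~(iv).

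For~(v) I would argue that the supremum over the larger family of all $g\in C_b(T,\cG)_+$ with $g\leqslant\chi(T\razn C)$ coincides with $\mu(T\razn C)$. On the one hand, monotonicity of the integral gives $\int g\,d\mu\leqslant\int\chi(T\razn C)\,d\mu=\mu(T\razn C)$ for every such $g$, so the supremum is at most $\mu(T\razn C)$. On the other hand, the particular net $v$ constructed above consists of functions $g_k$ lying in this family with $\int g_k\,d\mu\uparrow\mu(T\razn C)$ by~(iv), so the supremum is at least $\mu(T\razn C)$. Combining the two bounds yields the equality in~(v).

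The main obstacle is purely the boundedness issue: Lemma~\ref{lem-seqchiC-weaktop} only asserts $\chi(C)\leqslant f_k$, not $f_k\leqslant\edin$, so the naive complement $\edin-f_k$ need not be nonnegative. The resolution through the truncation $f_k\wedge\edin$ is routine once one observes that truncation preserves all three relevant properties (membership in $C_b(T,\cG)_+$, domination of $\chi(C)$, and monotone pointwise convergence to $\chi(C)$), whence Lemma~\ref{lem-seqchiC-weaktop} still applies to the truncated net and the integral conclusion~(ii) is unaffected. Everything else is elementary manipulation of suprema and limits of real nets.
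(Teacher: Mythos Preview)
Your proof is correct and follows essentially the same route as the paper: truncate the net from Lemma~\ref{lem-seqchiC-weaktop} by~$\edin$, then pass to the complement $g_k\eq\edin-(f_k\wedge\edin)$ to obtain~(iii) and~(iv), with~(v) following immediately. The paper is terser (it leaves the verification of~(v) implicit), but the argument and the key truncation step are identical.
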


\begin{proof}
Take the net $u$ from Lemma~\ref{lem-seqchiC-weaktop} and consider the functions $f'_k\eq f_k\wedge\edin\in C_b(T,\cG)_+$ and the net $u'\eq\net{f'_k}{k\in K}\downarrow$. Then $\chi(C)=\plim u'$ in $F(T)$ and $\net{\int f'_k\,d\mu}{k\in K}\downarrow\mu C$.
	
	Take now the functions $g_k\eq\edin-f'_k\in C_b(T,\cG)_+$ and the net $\net{g_k}{k\in K}\uparrow$. The assertions~(iii) and~(iv) hold for them. And, therefore, we get~(v).
\end{proof}

\begin{corollary}\label{cor-3-lem-seqchiC-weaktop}
	Let $(T,\cG)$ be a Tychonoff space, $\theta$ be a bounded Radon measure, and $C$ be a compact set. 
	Then $\theta(C)=\lim\net{\int f_k\,d\theta}{k\in K}$ for any net from assertion~\textup{(i)} of Lemma~\textup{\ref{lem-seqchiC-weaktop}}.
\end{corollary}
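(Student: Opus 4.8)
The plan is to reduce the statement for a general bounded Radon measure $\theta$ to the already-established positive case in Corollary~\ref{cor-1-lem-seqchiC-weaktop} via the Riesz decomposition $\theta=\theta_+ + \theta_-$, writing $\mu\eq\theta_+$ and $\nu\eq-\theta_-$, both of which are positive bounded Radon measures. For any net $\net{f_k\in C_b(T,\cG)_+}{k\in K}$ as in assertion~(i) of Lemma~\ref{lem-seqchiC-weaktop}, the Lebesgue--Radon integral is linear in the measure argument, so $\int f_k\,d\theta=\int f_k\,d\mu-\int f_k\,d\nu$. Thus it suffices to show each of $\net{\int f_k\,d\mu}{k\in K}$ and $\net{\int f_k\,d\nu}{k\in K}$ converges, and to identify the limits as $\mu C$ and $\nu C$ respectively.

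First I would invoke assertion~(ii) of Lemma~\ref{lem-seqchiC-weaktop} directly. The crucial observation is that the net $u\eq\net{f_k}{k\in K}$ furnished by assertion~(i) is constructed purely from the topological data $(T,\cG)$ and the compact set~$C$ (it envelopes $\chi(C)$ from above), so the \emph{same} net works simultaneously for both positive measures $\mu$ and $\nu$. Applying assertion~(ii) of Lemma~\ref{lem-seqchiC-weaktop} to $\mu$ gives $\net{\int f_k\,d\mu}{k\in K}\downarrow\mu C$, and applying it to $\nu$ gives $\net{\int f_k\,d\nu}{k\in K}\downarrow\nu C$. Both limits exist in $\Rbb$, so by the additivity of limits of convergent nets of reals we obtain
\[
\lim\net{\textstyle\int f_k\,d\theta}{k\in K}=\lim\net{\textstyle\int f_k\,d\mu}{k\in K}-\lim\net{\textstyle\int f_k\,d\nu}{k\in K}=\mu C-\nu C.
\]

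It then remains to identify $\mu C-\nu C$ with $\theta C$. This is immediate from the definition of the Lebesgue--Radon integral and the Riesz decomposition: since $\theta_+ = \mu$ and $-\theta_- = \nu$, the value of the signed measure on the Borel set~$C$ satisfies $\theta C=\mu C-\nu C$, which completes the identification. I would note for clarity that the hypothesis of assertion~(i) (that $\mu$ be positive) is exactly why the decomposition step is needed — Lemma~\ref{lem-seqchiC-weaktop} is stated only for positive measures, whereas here $\theta$ is an arbitrary bounded Radon measure. The only subtlety worth a remark is that assertion~(i) of Lemma~\ref{lem-seqchiC-weaktop} produces a net depending only on~$C$ and not on the particular measure, so applying it to the two positive components is legitimate; this is really the content of the phrase ``for any net from assertion~(i)'' in the statement. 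I do not anticipate any genuine obstacle here — the work is entirely done by Lemma~\ref{lem-seqchiC-weaktop}(ii) plus linearity, and this corollary is a routine unpacking of the signed case.
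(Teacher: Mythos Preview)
Your proof is correct and follows essentially the same route as the paper: decompose $\theta=\mu-\nu$ with $\mu=\theta_+$ and $\nu=-\theta_-$, apply assertion~(ii) of Lemma~\ref{lem-seqchiC-weaktop} separately to the two positive measures (the same net works for both since it depends only on~$C$), and combine via linearity of the integral in the measure to get $\theta C=\mu C-\nu C=\lim\net{\int f_k\,d\theta}{k\in K}$. One small slip: your opening sentence points to Corollary~\ref{cor-1-lem-seqchiC-weaktop}, which concerns the increasing net approximating $\chi(T\razn C)$, but your actual argument correctly invokes Lemma~\ref{lem-seqchiC-weaktop}(ii), exactly as the paper does.
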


\begin{proof}
Consider the positive Radon measures $\mu\eq\theta_+$ and $\nu\eq-\theta_-$. Then $\theta=\mu-\nu$.
Take some net~$u$ from assertion~(i) of Lemma~\ref{lem-seqchiC-weaktop}. 
By its assertion~(ii) $\mu C=\lim\net{\int f_k\,d\mu}{k\in K}$ and $\nu C=\lim\net{\int f_k\,d\nu}{k\in K}$. Summing these equalities and using the definition of the Lebesgue integral from section~\ref{sec-prelim} we get
$
\theta C=\mu C-\nu C=\lim\net{\int f_k\,d\mu}{k\in K}-\lim\net{\int f_k\,d\nu}{k\in K}
=\lim\net{\int f_k\,d\theta}{k\in K}.
$
\end{proof}

\begin{proposition}[the separation property of $\Psi_A$ for $A(T)\eq C_b(T,\cG)$]\label{prop-separCb-weaktop}
	Let $(T,\cG)$ be a Tychonoff space. 
	Then for every $\theta\in\RM_b(T,\cG)\razn\{\nul\}$ there is $f\in C_b(T,\cG)\razn\{\nul\}$ such that $\Psi(\theta,f)\ne0$.
\end{proposition}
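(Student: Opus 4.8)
The goal is to show that for a Tychonoff space and any nonzero bounded Radon measure $\theta$, there is a continuous bounded function $f$ with $\int f\,d\theta\ne 0$. I would argue by contraposition: assume $\Psi(\theta,f)=0$ for every $f\in C_b(T,\cG)$ and deduce $\theta=\nul$. As in the proof of Lemma~\ref{lem-separS-weaktop}, I would split $\theta$ via the Riesz decomposition into its positive parts $\mu\eq\theta_+$ and $\nu\eq-\theta_-$, so that $\theta=\mu-\nu$, and reduce everything to showing that $\theta$ vanishes on compact sets; the regularity of $\theta$ then forces $\theta=\nul$ on all Borel sets.

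The key point is that, unlike the symmetrizable case, the characteristic function $\chi(C)$ of a compact set need not be continuous, so I cannot simply plug $f\eq\chi(C)$ into the hypothesis. This is exactly where the preceding Corollary~\ref{cor-3-lem-seqchiC-weaktop} is designed to be used: for any compact set $C$ there is a net $\net{f_k\in C_b(T,\cG)_+}{k\in K}$ from assertion~(i) of Lemma~\ref{lem-seqchiC-weaktop} decreasing to $\chi(C)$, and that corollary gives
$$
\theta C=\lim\net{\textstyle\int f_k\,d\theta}{k\in K}.
$$
Since each $f_k\in C_b(T,\cG)$, the standing assumption yields $\int f_k\,d\theta=\Psi(\theta,f_k)=0$ for every $k\in K$. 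Passing to the limit, $\theta C=0$. As $C\in\cC$ was arbitrary, we obtain $\theta|\cC=\nul$.

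From $\theta|\cC=\nul$ the conclusion follows by regularity exactly as in Lemma~\ref{lem-separS-weaktop}: for any $B\in\cB$ and any $\eps>0$ there is $C\in\cC$ with $C\subset B$ and $|\theta B-\theta C|<\eps$, and since $\theta C=0$ this forces $|\theta B|<\eps$, whence $\theta B=0$. Thus $\theta=\nul$, contradicting $\theta\ne\nul$, and the proposition follows.

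**The main obstacle.**
I expect the one genuinely nontrivial step to be the approximation of $\chi(C)$ from above by continuous functions and the accompanying interchange of limit and integral. For a general Tychonoff space this is not elementary—it rests on the envelope-from-above property of $C_b(T,\cG)$ and on the pointwise ($\sigma$-)continuity of the integral functional, all of which are encapsulated in Lemma~\ref{lem-seqchiC-weaktop} and Corollary~\ref{cor-3-lem-seqchiC-weaktop}. Once those are invoked, the remainder is the routine reduction through the Riesz decomposition and inner compact regularity, precisely mirroring the structure of the earlier Lemma~\ref{lem-separS-weaktop}.
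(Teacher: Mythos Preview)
Your proof is correct and follows the same overall strategy as the paper's: argue by contraposition, reduce to showing $\theta|\cC=\nul$, and then invoke inner compact regularity. The one difference is in how you obtain $\theta C=0$. You invoke Corollary~\ref{cor-3-lem-seqchiC-weaktop} directly for the signed measure~$\theta$, which immediately gives $\theta C=\lim\int f_k\,d\theta=0$ since each term vanishes by hypothesis. The paper instead applies assertion~(ii) of Lemma~\ref{lem-seqchiC-weaktop} separately to the positive parts $\mu$ and~$\nu$, chooses approximating functions $f'$ and $f''$ for each, takes $f\eq f'\wedge f''$, and runs an $\eps/4$ triangle-inequality argument to conclude $\mu C=\nu C$. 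Your route is the more economical one, since Corollary~\ref{cor-3-lem-seqchiC-weaktop} already packages precisely that two-sided approximation; the paper states this corollary immediately before the proposition but, somewhat curiously, does not invoke it in the proof itself. Note also that your argument does not actually need the Riesz decomposition of~$\theta$ at all once you appeal to the corollary, so that step in your plan is vestigial.
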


\begin{proof}
Consider the positive Radon measures $\mu\eq\theta_+$ and $\nu\eq-\theta_-$. Then $\theta=\mu-\nu$.
Assume that $\Psi(\theta,f)=0$ for every $f\in C_b(T,\cG)$. Since $\theta=\mu-\nu$, we infer by assertion~2 of Theorem~\ref{theo-psibilin-weaktop} that $\Psi(\mu,f)=\Psi(\nu,f)$, i.\,e., $i_{\mu}f= i_{\nu}f$ for every $f\in C_b(T,\cG)$.

Take arbitrary $\eps>0$ and $C\in\cC$.
According to Lemma~\ref{lem-seqchiC-weaktop}, there exists some net $u\eq\net{f_k\in C_b(T,\cG)_+}{k\in K}\downarrow$ such that $\chi(C)\leqslant f_k$, $\chi(C)=\plim u$, and $\net{i_{\mu}f_k}{k\in K}\downarrow\mu C$.

Consequently, there is $k$ such that $0\leqslant i_{\mu}f_k-\mu C<\eps/4$. Denote~$f_k$ simply by~$f'$. Similarly there is $f''\in C_b$ such that $0\leqslant i_{\nu}f''-\nu C<\eps/4$. Consider $f\eq f'\wedge f''$. Since~$\mu$ and~$\nu$ are positive, we infer that $0\leqslant i_{\mu}f-\mu C<\eps/4$ and $0\leqslant i_{\nu}f-\nu C<\eps/4$. Then 
$|\nu C-\mu C|\le|\nu C-i_{\nu}f|+|i_{\nu}f-i_{\mu}f|+|i_{\mu}f-\mu  C|<\eps$, because $i_{\nu}f=i_{\mu}f$.
Since~$\eps$ is arbitrary, we conclude that $\nu C=\mu C$ for every $C\in\cC$. Thus, $\theta|\cC=\nul$. By virtue of the regularity of~$\theta$ we conclude that $\theta=\nul$.
\end{proof}

\begin{corollary*}
	The weak topology $\cG_w(\RM_b,C_b(T,\cG))$ on $\RM_b(T,\cG)$ is Hausdorff for any Tychonoff space $(T,\cG)$.
\end{corollary*}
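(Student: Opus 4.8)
The plan is to deduce the Hausdorffness of the weak topology $\cG_w(\RM_b,C_b(T,\cG))$ from the just-proved separation property in Proposition~\ref{prop-separCb-weaktop}, following exactly the pattern of the Corollary to Lemma~\ref{lem-separS-weaktop} in section~\ref{sec-weaktop}. The argument is purely formal once the separation property is in hand, so I expect no real obstacle here; the delicate content has already been discharged by Proposition~\ref{prop-separCb-weaktop} and its supporting Lemma~\ref{lem-seqchiC-weaktop} and corollaries.

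First I would take two distinct measures $\kap,\lm\in\RM_b(T,\cG)$ with $\kap\ne\lm$, so that $\theta\eq\lm-\kap\in\RM_b(T,\cG)\razn\{\nul\}$. Applying Proposition~\ref{prop-separCb-weaktop} to this~$\theta$, I obtain some $f\in C_b(T,\cG)\razn\{\nul\}$ with $\Psi(\theta,f)\ne0$. By assertion~2 of Theorem~\ref{theo-psibilin-weaktop} (bilinearity of $\Psi$ in the measure argument) this gives $\Psi(\lm,f)\ne\Psi(\kap,f)$, that is, $i_{\lm}f\ne i_{\kap}f$.

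Next I would set $\eps\eq|i_{\lm}f-i_{\kap}f|>0$ and form the two basic open neighbourhoods $U\eq G(\kap,f,\eps/2)$ and $V\eq G(\lm,f,\eps/2)$ in the weak topology $\cG_w(\RM_b,C_b(T,\cG))$. To see $U\cap V=\vrn$, I would argue by contradiction: if some $\rho\in U\cap V$ existed, then the triangle inequality would give $\eps\leqslant|i_{\lm}f-i_{\rho}f|+|i_{\rho}f-i_{\kap}f|<\eps/2+\eps/2=\eps$, which is absurd. Hence $\kap$ and~$\lm$ are separated by disjoint open neighbourhoods, and the topology is Hausdorff. This is verbatim the reasoning used for the $S(T,\cG)$-case, now transported to $C_b(T,\cG)$ by virtue of the Tychonoff separation property, so the only genuinely new ingredient is Proposition~\ref{prop-separCb-weaktop} itself.

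\begin{proof}
Take any $\kap,\lm\in\RM_b(T,\cG)$ such that $\kap\ne\lm$. By Proposition~\ref{prop-separCb-weaktop} there is $f\in C_b(T,\cG)\razn\{\nul\}$ such that $\Psi(\lm-\kap,f)\ne0$. Then $\Psi(\lm,f)\ne\Psi(\kap,f)$ due to assertion~2 of Theorem~\ref{theo-psibilin-weaktop}, i.\,e., $i_{\lm}f\ne i_{\kap}f$. Take the number $\eps\eq|i_{\lm}f-i_{\kap}f|>0$. Consider the neighbourhoods $U\eq G(\kap,f,\eps/2)$ and $V\eq G(\lm,f,\eps/2)$ of~$\kap$ and~$\lm$, respectively. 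Assume that there is $\rho\in U\cap V$. Then $\eps\leqslant|i_{\lm}f-i_{\rho}f|+|i_{\rho}f-i_{\kap}f|<\eps$ and we reach a contradiction. Therefore $U\cap V=\vrn$.
\end{proof}
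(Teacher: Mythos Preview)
Your proof is correct and follows exactly the approach indicated in the paper, which simply states that the proof is completely the same as the proof of the Corollary to Lemma~\ref{lem-separS-weaktop}. You have reproduced that argument verbatim with Proposition~\ref{prop-separCb-weaktop} substituted for Lemma~\ref{lem-separS-weaktop}, as intended.
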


The proof is completely the same as the proof of Corollary to Lemma~\ref{lem-separS-weaktop}.

\medskip

Now we can prove the main result of this section.

\begin{theorem}\label{theo-1-CwcTplus}
	Let $(T,\cG)$ be a Tychonoff space and $M$ be a subset of $\RM_b(T,\cG)_+$. Suppose that $M$ is closed and has properties $(\al^{z})$ and $(\beta)$. Then $M$ is compact in the induced weak topology $\cG_w(\RM_b(T,\cG),C_b(T,\cG))|M$.
\end{theorem}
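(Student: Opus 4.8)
The plan is to prove compactness of $M$ by exhibiting, for an arbitrary net in $M$, a subnet converging to a point of $M$, and then invoking the net-characterization of compactness (Theorem~2 of~\cite[ch.\,5]{Kelley1957}, as already used in the proof of Theorem~\ref{theo-1prime-SwcH}). So I would start with a net $s\eq\net{\mu_j\in M}{j\in J}$. The tail tightness property $(\al^z)$ immediately hands me a subnet $\net{\mu_{j_i}}{i\in I}$ which is, in the limit, uniformly tight: for each $\eps>0$ there are a compact $C$ and an index $i_0$ with $\mu_{j_i}(T\razn C)<\eps$ for $i\geqslant i_0$. Replacing $s$ by this subnet, I may assume from the outset that the net itself has this tail-tightness behaviour. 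This reduction is the whole point of working with $(\al^z)$ rather than $(\al^{\pi})$: it is a statement about nets and subnets, so it is preserved in exactly the form needed to run a cluster-point argument.

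Next I would transport everything to the dual space via the continuous map $\Lambda:\mu\mapsto i_{\mu}$ (Lemma~\ref{lem-Lambda-weaktopdual}), with $A(T)\eq C_b(T,\cG)$ and $X\eq C_b(T,\cG)$, $X'$ equipped with the weak$'$ topology $\cG_{w'}$. Property $(\beta)$ gives $b\eq\sup\scol{\mu T}{\mu\in M}\in\Rbb_+$; since for positive $\mu$ and $\|f\|_u\leqslant 1$ one has $|i_{\mu}f|\leqslant i_{\mu}|f|\leqslant i_{\mu}\edin=\mu T\leqslant b$ (just as in Lemma~\ref{lem-betapr-prsubs}), the image $I_M\eq\Lambda[M]$ lies in the polar ball $C_b\eq\{\xi\in X'\mid\forall f\,(\|f\|_u\leqslant 1\Rightarrow|\xi f|\leqslant b)\}$, which is $\cG_{w'}$-compact by the Alaoglu\,--\,Bourbaki theorem~\cite[Th.\,7 (III.3)]{KantAk1971}. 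Hence the net $\si\eq\net{i_{\mu_{j_i}}}{i\in I}$ has a cluster point $\vphi\in X'$, and I would show $\vphi$ is $\si$-exact so that the Zakharov representation theorem~\cite[Th.\,3 (3.6.3)]{ZakhRodi2018SFM2} yields $\vphi=i_{\mu_0}$ for some $\mu_0\in\RM_b(T,\cG)_+$.

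The $\si$-exactness splits into pointwise $\si$-continuity and quite-local tightness, and this is where the main work lies; it is also where $(\gm_{net})$ and the Tychonoff hypothesis enter. For pointwise $\si$-continuity I would take $\seq{f_n\in C_b(T,\cG)_+}{n\in\om}\downarrow\nul$ and use Lemma~\ref{lem-gmnet-prsubs} (whose hypotheses $(\al^{\pi})$ and $(\beta)$ follow from tail tightness along the subnet, plus $(\beta)$) to bound $\sup\scol{|i_{\mu_{j_i}}f_n|}{i\geqslant i_0}$ uniformly; combining this with the cluster-point property of $\vphi$ through a neighbourhood $G(\vphi,f_n,\eps/2)$ forces $|\vphi f_n|<\eps$ eventually, exactly as in the $(2)\vd(1)$ direction of Theorem~\ref{theo-1-SwcH}. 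The delicate point is that here $(\al^z)$ only controls \emph{tails} of the chosen subnet, so I must phrase the uniform tightness estimate for indices $i\geqslant i_0$ and check that this tail-form suffices in the cluster-point step; I would verify that because $\vphi$ is a cluster point I may always locate the witnessing index $i$ beyond $i_0$. For quite-local tightness I would reduce to the integral of $\chi(T\razn C)$-majorized functions and use Corollary~\ref{cor-1-lem-seqchiC-weaktop}(v), the $C_b$-approximation of $\chi(T\razn C)$ available on a Tychonoff space, to express the tail mass through continuous test functions; tail tightness then bounds $\sup_{i\geqslant i_0} i_{\mu_{j_i}}|f|$ for $|f|\leqslant\chi(T\razn C)\land u$, and the cluster-point inequality transfers the bound to $\vphi$. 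The essential obstacle throughout is the passage from the tail-tightness of a \emph{subnet} to genuine control of the limit functional $\vphi$: unlike the uniform $(\al^{\zeta})$ hypothesis of Theorem~\ref{theo-1-SwcH}, here tightness is only asymptotic along one subnet, so every estimate must be set up for indices past $i_0$ and then funnelled through the cluster-point definition of $\vphi$.

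Finally, having $\vphi=i_{\mu_0}$ with $\mu_0\in\RM_b(T,\cG)_+$, I would check that $\mu_0$ is itself a cluster point of the original net $s$ in the weak topology $\cG_w(\RM_b,C_b)$. This is verbatim the argument closing Theorem~\ref{theo-1-SwcH}: given a basic neighbourhood $H\eq G(\mu_0,\scol{f_k}{k\in n},\eps)$ and an index, the cluster-point property of $\vphi$ in $X'$ and the identity $\vphi=i_{\mu_0}$ produce a cofinal index $\rho$ with $\mu_\rho\in H$. Since $M$ is closed, $\mu_0\in M$, and by the compactness criterion of~\cite{Kelley1957} the set $M$ is compact in $\cG_w(\RM_b(T,\cG),C_b(T,\cG))|M$. (The Hausdorffness needed to invoke the criterion is supplied by the Corollary to Proposition~\ref{prop-separCb-weaktop}.)
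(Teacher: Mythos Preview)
Your overall architecture matches the paper's: pass to the dual via $\Lambda$, use $(\beta)$ and Alaoglu--Bourbaki to trap the image in a weak$'$-compact ball, extract a limit/cluster functional $\vphi$, represent $\vphi$ as $i_{\mu_0}$, and conclude $\mu_0\in M$ by closedness. The order in which you apply $(\al^z)$ versus the dual-compactness step is reversed relative to the paper (the paper first extracts a \emph{convergent} subnet in $X'$, then applies $(\al^z)$ to it, so that $\vphi$ is an honest limit rather than merely a cluster point), but both orderings work.

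The substantive difference is in how $\vphi$ is identified with an integral. You aim to verify that $\vphi$ is $\si$-exact (pointwise $\si$-continuous and quite locally tight) and invoke the Zakharov representation theorem. The paper instead shows only that $\vphi$ is \emph{tight} in the global sense---for every $\eps>0$ there is $C\in\cC$ with $|f|\leqslant\chi(T\razn C)\Rightarrow|\vphi f|<\eps$---and then invokes the \emph{Bourbaki} representation theorem \cite[Corollary~2 to Theorem~4~(3.6.3)]{ZakhRodi2018SFM2}, which on a Tychonoff space requires only tightness of a bounded functional on $C_b$. This is a genuine shortcut: tightness follows in a few lines from tail tightness and the cluster/limit property, with no need for Dini-type arguments or for Corollary~\ref{cor-1-lem-seqchiC-weaktop}.

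Your plan has a gap at precisely the point the paper's shortcut avoids. What you describe under ``quite local tightness'' is control of $\vphi f$ for $|f|\leqslant\chi(T\razn C)\wedge u$; that is ordinary tightness, not the local version, which demands a compact $C\subset G$ with $|f|\leqslant\chi(G\razn C)\wedge u\Rightarrow|\vphi f|<\eps$ for \emph{every} open $G$. Property $(\al^z)$ says nothing about $\mu_{j_i}(G\razn C)$ for general $G$, so your sketch does not establish quite local tightness as stated. The fix is either to prove tightness only and cite the Bourbaki theorem (as the paper does), or to note that $C_b(T,\cG)^{\pi}=C_b(T,\cG)^{\tu}$ on a Tychonoff space (Corollary~1 to Theorem~5~(3.6.2) in~\cite{ZakhRodi2018SFM2}), so tightness already implies $\si$-exactness and your appeal to the Zakharov theorem then goes through. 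Either way, the separate verification of pointwise $\si$-continuity is unnecessary.
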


\begin{proof}
Let $I_M\eq\Lambda[M]$.
Consider some net $s\eq\net{\mu_{\kap}\in M}{\kap\in K}$ and the corresponding net $\tau\eq\net{i_{\mu_{\kap}}\in I_M}{\kap\in K}$. Completely in the same way as in the proof of Theorem~\ref{theo-1-SwcH} it is checked that $\cl I_M$ is compact in the topological space $X'$ with the weak topology~$\cG_{w'}$ for $X\eq C_b(T,\cG)$.

By the Theorem~2 from~\cite[ch.\,5]{Kelley1957} there are some subnet $\si\eq\net{i_{\kap_j}}{j\in J}$ of the net~$\tau$ and a functional $\vphi\in X'$ such that $\vphi=\lim\si$. By property~$(\al^z)$ for the net $r\eq\net{\mu_{\kap_j}}{j\in J}$ there is a subnet $\net{\mu_{\kap_{j_i}}}{i\in I}$ such that for every $\eps>0$ there are $C\in\cC$ and $i_0\in I$ such that $\mu_{\kap_{j_i}}(T\razn C)<\eps/2$ for every $i\geqslant i_0$.

Denote $\mu_{\kap_{j_i}}$ by $\nu_i$. Since $\vphi=\lim\si$ and $\rho\eq\net{i_{\nu_i}}{i\in I}$ is a subnet of the net~$\si$, we infer that $\vphi=\lim\rho$.

Check that $\vphi$ is tight. Take $\eps>0$. By the above, there are~$C$ and~$i_0$ such that $\nu_i(T\razn C)<\eps/2$ for every $i\geqslant i_0$. Take some $f\in A(T)$ such that $|f|\leqslant\chi(T\razn C)\eq g$ and consider the neighbourhood $G\eq G(\vphi,f,\eps/2)$ of~$\vphi$. It follows from $\vphi=\lim\rho$ that there is $i_1\in I$ such that $i_{\nu_i}\in G$ for every $i\geqslant i_1$. Since~$I$ is upward directed, there is $i_2\in I$ such that $i_2\geqslant i_0$ and $i_2\geqslant i_1$. Let $i\geqslant i_2$. Then
$|\vphi f|\leqslant|\vphi f-i_{\nu_i}f|+|i_{\nu_i}f|<\eps/2+i_{\nu_i}|f|\leqslant\eps/2+i_{\nu_i}g=
 \eps/2+\nu_i(T\razn C)<\eps$.
Thus, $\vphi$ is tight.

Now, according to the Bourbaki representation theorem \cite[Corollary~2 to Theorem~4~(3.6.3)]{ZakhRodi2018SFM2}, there is some measure $\mu_0\in\RM_b(T,\cG)_+$ such that $\vphi=i_{\mu_0}$. Check that $\mu_0=\lim\tau$. 

Take some neighbourhood
$H\eq G(\mu_0,\scol{f_k\in A(T)}{k\in n},\eps)$ of~$\mu_0$ and consider the corresponding neighbourhood 
$G\eq G(i_{\mu_0},\scol{f_k}{k\in n},\eps)$ of~$i_{\mu_0}$. Since $i_{\mu_0}=\lim\si$ there is $j_0\in J$ such that $i_{\mu_{\kap_j}}\in G$, i.\,e., $|i_{\mu_{\kap_j}}f_k-i_{\mu_0}f_k|<\eps$ for every $k\in n$ and every $j\geqslant j_0$. This means that $\mu_{\kap_j}\in M\cap H$ for $j\geqslant j_0$. By Proposition~1.1.1 from~\cite{Engelking1989} $\mu_0\in\cl M$. Since~$M$ is closed, this implies $\mu_0=\lim r$ and $\mu_0\in M$.

Thus, the net~$s$ has the subnet~$r$ converging to $\mu_0\in M$. Hence, by Theorem~2 from~\cite[ch.\,5]{Kelley1957} $M$ is weakly compact.
\end{proof}

Now we are going to show that the well known sufficiency theorems for a Tychonoff space (see~\cite[Th.\,1 (IX.5.5)]{Bourbaki(VI9)1969} and~\cite[8.6.7]{Bogachev2006}) directly follow from Theorem~\ref{theo-1-CwcTplus}. 

\begin{lemma}\label{lem-phifcont-CwcTplus}
Let $(T,\cG)$ be a Tychonoff space and $f\in C_b(T,\cG)$.
Then the function $\vphi_f:\RM_b(T,\cG)\to\Rbb$ such that $\vphi_f(\mu)\eq\int f\,d\mu$ for every $\mu\in\RM_b(T,\cG)$ is continuous on the topological space $(\RM_b,\cG_w(\RM_b,C_b))$.
\end{lemma}

The proof is completely similar to the proof of Lemma~\ref{lem-phifcont-SwcH} from section~\ref{sec-SwcH}.

\begin{proposition}\label{prop-psigsemicont-CwcTplus}
Let $(T,\cG)$ be a Tychonoff space and $g\in C_b(T,\cG)_+$.
Then the function $\psi_g:\RM_b(T,\cG)\to\Rbb$ such that $\psi_g(\mu)=\int g\,d|\mu|$ for every $\mu\in\RM_b$ is lower semicontinuous on the topological space $(\RM_b,\cG_w(\RM_b,C_b))$.
\end{proposition}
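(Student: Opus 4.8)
The plan is to repeat the argument of Proposition~\ref{prop-psigsemicont-SwcH} almost verbatim, with $S(T,\cG)$ replaced by $C_b(T,\cG)$ and the continuity Lemma~\ref{lem-phifcont-SwcH} replaced by its Tychonoff counterpart Lemma~\ref{lem-phifcont-CwcTplus}. The target is the representation
\[
\psi_g=\sup\{\vphi_f\mid f\in C_b(T,\cG)\wedge|f|\leqslant g\}
\]
of $\psi_g$ as a pointwise supremum, in the lattice-ordered linear space of all real-valued functions on $\RM_b$, of the functions $\vphi_f$ from Lemma~\ref{lem-phifcont-CwcTplus}. Each such $\vphi_f$ is continuous, hence lower semicontinuous, on $(\RM_b,\cG_w(\RM_b,C_b))$, and a supremum of an arbitrary family of lower semicontinuous functions is again lower semicontinuous (see, e.g.\ assertion~(7) of Proposition~1~(2.3.8) in~\cite{ZakhRodi2018SFM2}); so the representation yields the claim at once.

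Thus everything reduces to the integral identity
\[
\int g\,d|\mu|=\sup\{i_{\mu}f\mid f\in C_b(T,\cG)\wedge|f|\leqslant g\}
\]
valid for every $\mu\in\RM_b$ and every $g\in C_b(T,\cG)_+$. To establish it I would introduce the mapping $L:\RM_b\to C_b(T,\cG)^{\tu}$ with $L(\mu)(f)\eq\int f\,d\mu$; that each value $L(\mu)\eq i_{\mu}$ is indeed a $\si$-exact functional on $C_b(T,\cG)$ was already verified inside the proof of Lemma~\ref{lem-seqchiC-weaktop}. Granting that $L$ is an isomorphism of lattice-ordered linear spaces, one obtains $L(|\mu|)=|L(\mu)|$, and the modulus formula for $\si$-exact functionals (Corollary~1 to Proposition~2~(3.6.1) in~\cite{ZakhRodi2018SFM2}) gives $|L(\mu)|(g)=\sup\{L(\mu)f\mid f\in C_b(T,\cG)\wedge|f|\leqslant g\}$. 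Since $L(|\mu|)(g)=\int g\,d|\mu|=\psi_g(\mu)$, the displayed identity, and with it the supremum representation of $\psi_g$, follows.

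The one step that genuinely requires the Tychonoff hypothesis --- and the main obstacle --- is the claim that $L$ is a \emph{lattice} isomorphism onto $C_b(T,\cG)^{\tu}$, equivalently that the functional modulus $|i_{\mu}|$ is the integral functional $i_{|\mu|}$ of the total variation. On a general Hausdorff space this is exactly the content of the isomorphism used in Proposition~\ref{prop-psigsemicont-SwcH} for $S(T,\cG)$; for $C_b(T,\cG)$ it is the corresponding identification of $\RM_b$ with $C_b(T,\cG)^{\tu}$ as ordered linear spaces, which rests on the Bourbaki representation theorem (already invoked in Theorem~\ref{theo-1-CwcTplus}) together with the uniqueness of the $\si$-exact extension exploited in Lemma~\ref{lem-seqchiC-weaktop}. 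Once this order-isomorphism is secured, the passage to the lattice operations is automatic and the proof closes exactly as in Proposition~\ref{prop-psigsemicont-SwcH}.
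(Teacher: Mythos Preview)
Your proposal is correct and follows essentially the same route as the paper. The only difference is that the paper dispatches your ``main obstacle'' in one stroke by mapping into $C_b(T,\cG)^{\pi}$ rather than $C_b(T,\cG)^{\tu}$, citing the ready-made lattice isomorphism $\RM_b\cong C_b(T,\cG)^{\pi}$ from Corollary~1 to Theorem~2~(3.6.4) of~\cite{ZakhRodi2018SFM2}, and then invoking $C_b^{\pi}=C_b^{\tu}$ (Corollary~1 to Theorem~5~(3.6.2)) to return to the $\si$-exact setting needed for the modulus formula.
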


\begin{proof}
Consider the mapping $L:\RM_b\to C_b(T,\cG)^\pi$ such that $L(\mu)(f)\eq\int f\,d\mu$ for every $\mu\in\RM_b$ and $f\in C_b$.
By virtue of Corollary~1 to Theorem~2 (3.6.4) from~\cite{ZakhRodi2018SFM2} $L$ is an isomorphism of the given lattice-ordered linear spaces. Therefore $L(|\mu|)=|L(\mu)|$. 
By virtue of Corollary~1 to Theorem~5 (3.6.2) from~\cite{ZakhRodi2018SFM2} $C_b^\pi=C_b^\tu$.
Further, following the proof of Proposition~\ref{prop-psigsemicont-SwcH} and using Lemma~\ref{lem-phifcont-CwcTplus} instead of Lemma~\ref{lem-phifcont-SwcH} we get the necessary assertion.
\end{proof}

\begin{corollary}\label{cor-1-prop-psigsemicont-CwcTplus}
Let $(T,\cG)$ be a Tychonoff space, and $C$ be a compact set.
Then the function $\chi_1:\RM_b(T,\cG)\to\Rbb$ such that $\chi_1(\mu)=|\mu|(T\razn C)$ for every $\mu\in\RM_b$ is lower semicontinuous on the topological space $(\RM_b,\cG_w(\RM_b,C_b))$.
\end{corollary}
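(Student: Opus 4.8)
The plan is to reduce the statement to Proposition~\ref{prop-psigsemicont-CwcTplus} combined with the continuous approximation provided by Corollary~\ref{cor-1-lem-seqchiC-weaktop}. One cannot simply imitate the proof of Corollary~\ref{cor-1-prop-psigsemicont-SwcH}: there the indicator $\chi(G\razn C)$ belongs to $S(T,\cG)$, so the conclusion follows from a single application of Proposition~\ref{prop-psigsemicont-SwcH} with $g\eq\chi(G\razn C)$. In the present Tychonoff setting, however, the indicator $\chi(T\razn C)$ is in general \emph{not} continuous, hence $\chi(T\razn C)\notin C_b(T,\cG)$, and Proposition~\ref{prop-psigsemicont-CwcTplus} cannot be invoked for it directly. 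This discrepancy is the heart of the matter, and it is circumvented by writing $|\mu|(T\razn C)$ as a supremum of integrals of continuous functions.

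First I would recall that for every $\mu\in\RM_b(T,\cG)$ the total variation $|\mu|$ is a positive bounded Radon measure (see Section~\ref{sec-prelim}). Applying assertion~(v) of Corollary~\ref{cor-1-lem-seqchiC-weaktop} to the positive measure $|\mu|$ and the compact set~$C$ then gives
\[
\chi_1(\mu)=|\mu|(T\razn C)=\sup\Bigl\{\int g\,d|\mu|\;\Bigm|\;g\in C_b(T,\cG)_+\wedge g\leqslant\chi(T\razn C)\Bigr\}.
\]
The decisive observation is that the indexing family $\mathcal{D}\eq\{g\in C_b(T,\cG)_+\mid g\leqslant\chi(T\razn C)\}$ does \emph{not} depend on~$\mu$. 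Thus the displayed identity exhibits $\chi_1$ as a pointwise supremum over the \emph{fixed} family $\mathcal{D}$ of the functions $\mu\mapsto\int g\,d|\mu|=\psi_g(\mu)$, one for each $g\in\mathcal{D}$, where $\psi_g$ is the function from Proposition~\ref{prop-psigsemicont-CwcTplus}.

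It then remains to assemble the pieces. Since every $g\in\mathcal{D}$ lies in $C_b(T,\cG)_+$, Proposition~\ref{prop-psigsemicont-CwcTplus} shows that each $\psi_g$ is lower semicontinuous on $(\RM_b,\cG_w(\RM_b,C_b))$. Because the supremum of a family of lower semicontinuous functions is lower semicontinuous (assertion~(7) of Proposition~1~(2.3.8) in~\cite{ZakhRodi2018SFM2}), I conclude that $\chi_1=\sup\{\psi_g\mid g\in\mathcal{D}\}$ is lower semicontinuous, as required. The only genuine obstacle is the first reduction: recognizing that the non-continuity of $\chi(T\razn C)$ forces one to pass through the continuous approximation of Corollary~\ref{cor-1-lem-seqchiC-weaktop}(v) rather than applying the semicontinuity result to a single function. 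Once this approximation is in hand, the conclusion is immediate from the stability of lower semicontinuity under suprema.
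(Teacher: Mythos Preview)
Your proof is correct and follows essentially the same route as the paper: apply Corollary~\ref{cor-1-lem-seqchiC-weaktop}(v) to the positive measure~$|\mu|$ to write $\chi_1(\mu)$ as a supremum of $\psi_g(\mu)$ over the fixed family $\{g\in C_b(T,\cG)_+\mid g\leqslant\chi(T\razn C)\}$, invoke Proposition~\ref{prop-psigsemicont-CwcTplus} for each $\psi_g$, and conclude by stability of lower semicontinuity under suprema. Your explicit remark that the indexing family is independent of~$\mu$ and your justification that $|\mu|$ is positive (so that Corollary~\ref{cor-1-lem-seqchiC-weaktop} applies) are welcome clarifications absent from the paper's more terse version.
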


\begin{proof}
By Corollary~\ref{cor-1-lem-seqchiC-weaktop} 
$|\mu|(T\razn C)=\sup\{\int g\,d\mu\mid g\in C_b(T,\cG)_+ \wedge g\leqslant\chi(T\razn C)\}$.
In notations from the Proposition we have $\chi_1(\mu)=\sup\{\psi_g(\mu)\mid g\in C_b(T,\cG)_+ \wedge g\leqslant\chi(T\razn C)\}$ for every $\mu\in\RM_b$. This pointwise supremum means that $\chi_1=\sup\{\psi_g\mid g\in C_b \wedge g\leqslant\chi(T\razn C)\}$ in the lattice-ordered linear space $F(\RM_b)$. By the Proposition the function~$\psi_g$ is lower semicontinuous. Consequently, $\chi_1$ as the pointwise supremum of lower semicontinuous functions is lower semicontinuous as well.
\end{proof}

\begin{corollary}\label{cor-2-prop-psigsemicont-CwcTplus}
Let $(T,\cG)$ be a Tychonoff space.
Then the function $\chi_2:\RM_b\to\Rbb$ such that $\chi_2(\mu)=|\mu|(T)$ for every $\mu\in\RM_b(T,\cG)$ is lower semicontinuous on the topological space $(\RM_b(T,\cG),\cG_w(\RM_b(T,\cG),C_b(T,\cG)))$.
\end{corollary}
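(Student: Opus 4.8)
The plan is to derive Corollary~\ref{cor-2-prop-psigsemicont-CwcTplus} as a direct specialization of the preceding Corollary~\ref{cor-1-prop-psigsemicont-CwcTplus}, exactly parallel to how Corollary~\ref{cor-2-prop-psigsemicont-SwcH} was obtained from Corollary~\ref{cor-1-prop-psigsemicont-SwcH} in the $S(T,\cG)$-setting. The only substantive observation needed is that the function $\chi_2(\mu)=|\mu|(T)$ is recovered from $\chi_1(\mu)=|\mu|(T\razn C)$ by choosing the compact set $C$ to be empty.

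First I would invoke Corollary~\ref{cor-1-prop-psigsemicont-CwcTplus} with the particular choice $C\eq\vrn$. This is a legitimate choice since the empty set is compact in any topological space. With $C=\vrn$ we have $T\razn C=T$, so the function supplied by that corollary, namely $\mu\mapsto|\mu|(T\razn C)$, becomes precisely $\mu\mapsto|\mu|(T)$, which is exactly the function $\chi_2$ in the present statement.

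Second, Corollary~\ref{cor-1-prop-psigsemicont-CwcTplus} asserts that this function is lower semicontinuous on $(\RM_b,\cG_w(\RM_b,C_b))$. Since substituting $C=\vrn$ identifies it with $\chi_2$, the lower semicontinuity of $\chi_2$ follows immediately. Thus the entire proof reduces to the single sentence: apply Corollary~\ref{cor-1-prop-psigsemicont-CwcTplus} to $C\eq\vrn$.

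There is essentially no obstacle here; the work has already been done in Corollary~\ref{cor-1-prop-psigsemicont-CwcTplus}, which itself rests on the integral representation $|\mu|(T\razn C)=\sup\{\int g\,d\mu\mid g\in C_b(T,\cG)_+\wedge g\leqslant\chi(T\razn C)\}$ from Corollary~\ref{cor-1-lem-seqchiC-weaktop} together with the lower semicontinuity of each $\psi_g$. The present corollary is a boundary-case instance of that general result, and the proof is therefore a one-line deduction.
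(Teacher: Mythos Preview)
Your proposal is correct and matches the paper's own proof exactly: the paper also writes simply ``Apply the previous Corollary to $C\eq\vrn$.'' Nothing further is needed.
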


\begin{proof}
Apply the previous Corollary to $C\eq\vrn$.
\end{proof}

\begin{theorem}\label{theo-alzetabar-CwcTplus}
Let $(T,\cG)$ be a Tychonoff space, $M$ be a subset of the set $\RM_b(T,\cG)$, and $\cl M$ be the closure of~$M$ in the weak topology $\cG_w(\RM_b(T,\cG),C_b(T,\cG))$. Then
\begin{enumerate}
\item the following properties are equivalent:
	\begin{enumerate}
	\item[$(\al^{\pi}_{var})$] 
		for any $\eps>0$ there is a compact set~$C$ such that $|\mu|(T\razn C)<\eps$ for any $\mu\in M$;
	\item[$(\bar\al^{\pi}_{var})$] 
		for any $\eps>0$ there is a compact set~$C$ such that $|\nu|(T\razn C)<\eps$ for any $\nu\in\cl M$;
	\end{enumerate}
\item the following properties are equivalent:
	\begin{enumerate}
	\item[$(\beta_{var})$]
		$\sup\scol{|\mu| T}{\mu\in M}\in\Rbb_+$;
	\item[$(\bar\beta_{var})$]
		$\sup\scol{|\nu| T}{\nu\in\cl M}\in\Rbb_+$.
		\end{enumerate}
	\end{enumerate}
\end{theorem}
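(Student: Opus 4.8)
The plan is to follow the argument of Theorem~\ref{theo-alzetabar-SwcH} line for line, the only substitution being that the lower semicontinuity of the total-variation functionals in the $C_b(T,\cG)$-weak topology is now provided by Corollaries~\ref{cor-1-prop-psigsemicont-CwcTplus} and~\ref{cor-2-prop-psigsemicont-CwcTplus} rather than by their $S(T,\cG)$-counterparts. The single principle behind both equivalences is that a lower semicontinuous function has closed sublevel sets: if $\chi$ is lower semicontinuous on $(\RM_b,\cG_w(\RM_b,C_b))$ and $M\subset\preim{\chi}{]-\infty,c]}$, then $\cl M$ lies in that same closed set. The reverse implications $(\bar\al^{\pi}_{var})\vd(\al^{\pi}_{var})$ and $(\bar\beta_{var})\vd(\beta_{var})$ are immediate from $M\subset\cl M$.

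For part~(i) I would prove $(\al^{\pi}_{var})\vd(\bar\al^{\pi}_{var})$ thus. Fix $\eps>0$. Property~$(\al^{\pi}_{var})$ yields a compact set~$C$ with $\sup\scol{|\mu|(T\razn C)}{\mu\in M}<\eps/2$. Writing $\chi_1(\mu)\eq|\mu|(T\razn C)$ for the function of Corollary~\ref{cor-1-prop-psigsemicont-CwcTplus}, consider $N\eq\preim{\chi_1}{]-\infty,\eps/2]}$. The displayed inequality gives $M\subset N$, and since $\chi_1$ is lower semicontinuous $N$ is closed, whence $\cl M\subset N$. This says $|\nu|(T\razn C)<\eps$ for every $\nu\in\cl M$, i.\,e.\ property~$(\bar\al^{\pi}_{var})$.

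Part~(ii) is handled identically. For $(\beta_{var})\vd(\bar\beta_{var})$ I would put $b\eq\sup\scol{|\mu| T}{\mu\in M}\in\Rbb_+$ and use the function $\chi_2(\mu)\eq|\mu|(T)$ of Corollary~\ref{cor-2-prop-psigsemicont-CwcTplus}. Property~$(\beta_{var})$ places $M$ inside the sublevel set $\preim{\chi_2}{]-\infty,b]}$, which is closed by lower semicontinuity of~$\chi_2$; hence $\cl M$ lies there as well, giving $\sup\scol{|\nu| T}{\nu\in\cl M}\leqslant b\in\Rbb_+$.

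I expect no genuine obstacle at the level of this theorem: all the work has already been deposited into the lower semicontinuity of the total-variation functionals. That is the delicate point, and it rests on the lattice isomorphism $L:\RM_b\to C_b(T,\cG)^{\pi}$ together with the representation $|\mu|(T\razn C)=\sup\{\int g\,d\mu\mid g\in C_b(T,\cG)_+\wedge g\leqslant\chi(T\razn C)\}$ of Corollary~\ref{cor-1-lem-seqchiC-weaktop}, which exhibits $\chi_1$ as a pointwise supremum of the continuous functions~$\vphi_g$ from Lemma~\ref{lem-phifcont-CwcTplus}. Granted those corollaries, the present statement is a direct application of the closed-sublevel-set principle.
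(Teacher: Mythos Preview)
Your proposal is correct and matches the paper's own approach exactly: the paper's proof consists of the single sentence ``The proof is quite similar to the proof of Theorem~\ref{theo-alzetabar-SwcH},'' which amounts precisely to the closed-sublevel-set argument you spell out, with Corollaries~\ref{cor-1-prop-psigsemicont-CwcTplus} and~\ref{cor-2-prop-psigsemicont-CwcTplus} supplying the lower semicontinuity in place of their $S(T,\cG)$ analogues. Your closing remarks correctly identify where the actual work lies.
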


The proof is quite similar to the proof of Theorem~\ref{theo-alzetabar-SwcH} from section~\ref{sec-SwcH}. See also Proposition~11 (IX.5.5) in \cite{Bourbaki(VI9)1969}.

Prove the analogue of Lemma~\ref{lem-closposS-prsubs} for $c_b$-compactness.
 
\begin{lemma}\label{lem-closposCb-prsubs}
	Let $(T,\cG)$ be a Tychonoff space, $M\subset\RM_b(T,\cG)_+$, and $\cl M$ be the closure of~$M$ in the weak topology $\cG_w(\RM_b(T,\cG),C_b(T,\cG))$.
	Then $\cl M\subset\RM_b(T,\cG)_+$.
\end{lemma}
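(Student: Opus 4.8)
The plan is to mirror the proof of Lemma~\ref{lem-closposS-prsubs}, adapting it to the $C_b(T,\cG)$-weak topology where, crucially, characteristic functions of compact sets are no longer available as test functions. Since $(T,\cG)$ is Tychonoff, the separation property of Proposition~\ref{prop-separCb-weaktop} guarantees that the weak topology is Hausdorff, so closures behave as expected. The goal is to show that any $\nu\in\cl M$ satisfies $\nu B\geqslant 0$ for every $B\in\cB$, and by the Riesz decomposition this is enough to conclude $\nu\in\RM_b(T,\cG)_+$.

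First I would reduce positivity on all Borel sets to positivity on compact sets. Given $\nu\in\cl M$ and $B\in\cB$, inner compact regularity produces $C\in\cC$ with $C\subset B$ and $|\nu B-\nu C|$ arbitrarily small; thus if $\nu C\geqslant 0$ for all compact $C$, then letting the error shrink forces $\nu B\geqslant 0$. This is exactly the second paragraph of the proof of Lemma~\ref{lem-closposS-prsubs}, and it transfers verbatim. So the whole problem localizes to proving $\nu C\geqslant 0$ for an arbitrary compact set $C$.

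The key step, and the place where the $C_b$ setting differs from the $S$ setting, is establishing $\nu C\geqslant 0$. In the symmetrizable case one simply takes $f\eq\chi(C)\in S(T,\cG)$ and uses the neighborhood $G(\nu,f,\de)$ to compare $\nu C$ with some $\mu C\geqslant 0$. Here $\chi(C)\notin C_b(T,\cG)$ in general, so instead I would invoke Lemma~\ref{lem-seqchiC-weaktop}: there is a decreasing net $u\eq\net{f_k\in C_b(T,\cG)_+}{k\in K}\downarrow$ with $\chi(C)\leqslant f_k$ and, by Corollary~\ref{cor-3-lem-seqchiC-weaktop}, $\nu C=\lim\net{\int f_k\,d\nu}{k\in K}$. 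Fixing $\eps>0$, choose $k$ with $\bigl|\int f_k\,d\nu-\nu C\bigr|<\eps$; since $f_k\in C_b(T,\cG)$, the weak neighborhood $G(\nu,f_k,\eps)$ meets $M$, giving some $\mu\in M$ with $\bigl|\int f_k\,d\mu-\int f_k\,d\nu\bigr|<\eps$. Because $\mu\in\RM_b(T,\cG)_+$ and $f_k\geqslant\nul$, we have $\int f_k\,d\mu\geqslant 0$, whence $\nu C>\int f_k\,d\nu-\eps>\int f_k\,d\mu-2\eps\geqslant-2\eps$. As $\eps$ is arbitrary, $\nu C\geqslant 0$.

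The main obstacle, as anticipated, is the absence of $\chi(C)$ as a legitimate continuous test function; the approximating net from Lemma~\ref{lem-seqchiC-weaktop} together with the integral-limit identity of Corollary~\ref{cor-3-lem-seqchiC-weaktop} is precisely the device that overcomes it, and both are already proved in the excerpt. Everything else is the routine $\eps$-chasing already carried out in Lemma~\ref{lem-closposS-prsubs}. Combining the compact-set positivity with the regularity reduction yields $\nu B\geqslant 0$ for all $B\in\cB$, so $\nu$ is positive and $\cl M\subset\RM_b(T,\cG)_+$.
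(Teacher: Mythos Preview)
Your proof is correct and follows essentially the same route as the paper: reduce by inner compact regularity to showing $\nu C\geqslant 0$ for compact $C$, invoke the approximating net $\net{f_k}{k\in K}$ of Lemma~\ref{lem-seqchiC-weaktop} together with the limit identity of Corollary~\ref{cor-3-lem-seqchiC-weaktop}, and then use a single $f_k$ as a $C_b$-test function to compare $\int f_k\,d\nu$ with $\int f_k\,d\mu\geqslant 0$ for some $\mu\in M$. The only cosmetic difference is that the paper phrases each step as a contradiction (setting $\de\eq|\nu C|$, $\gm\eq|\beta|$, etc.) whereas you run a direct $\eps$-argument; the substance is identical.
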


\begin{proof}
	Take some $\nu\in\cl M$, $B\in\cB$ such that $\eps\eq|\nu B|>0$, and $C\in\cC$ such that $C\subset B$ and $|\nu B-\nu C|<\eps$. Take also some net~$u$ from assertion~(i) of Lemma~\ref{lem-seqchiC-weaktop}. By Corollary~\ref{cor-3-lem-seqchiC-weaktop} to this Lemma $\nu C=\lim\net{\int f_k\,d\nu}{k\in K}$. Then for $\de\eq|\nu C|>0$ there is $k\in K$ such that 
	$|\nu C-\int f_l\,d\nu|<\de$ for every $l\geqslant k$. For $\beta\eq\int f_k\,d\nu$ and $\gm\eq|\beta|>0$ consider the neighbourhood $G\eq G(\nu,f_k,\gm)$. Since $\nu\in\cl M$, there is $\mu\in G\cap M$, i.\,e., $|\int f_k\,d\nu-\beta|<\gm$. This implies 
	$0\leqslant\int f_k\,d\mu<\beta+\gm=\beta+|\beta|$. If $\beta<0$, then $0<0$. It follows from this contradiction that $\beta\geqslant 0$.
	
	Using the inequality $|\nu C-\beta|<\de$ we get $0\leqslant\beta<\nu C+\de=\nu C+|\nu C|$. As above this implies $\nu C\geqslant 0$.
	Using the inequality $|\nu B-\nu C|<\eps$ we get $0\leqslant\nu C<\nu B+\eps=\nu B+|\nu B|$. As above this implies $\nu B\geqslant 0$.
	Thus, the measure~$\nu$ is positive.
\end{proof}

\begin{theorem}[the classical sufficiency theorem for positive measures]\label{theo-PBpos-CwcTplus}
Let $(T,\cG)$ be a Tychonoff space, $M\subset\RM_b(T,\cG)_+$ have properties $(\al^{\pi})$ and $(\beta)$, and $\cl M$ be the closure of~$M$ in the weak topology $\cG_w(\RM_b,C_b)$.
Then $\cl M$ is compact in the induced weak topology $\cG_w(\RM_b,C_b)|\cl M$.
\end{theorem}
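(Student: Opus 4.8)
The plan is to reduce the assertion to the sufficiency criterion already proved in Theorem~\ref{theo-1-CwcTplus}, applied to the closed set $N\eq\cl M$ in place of~$M$. Since $M$ itself need not be closed, I cannot invoke that theorem directly for~$M$; instead I must verify its three hypotheses for~$N$, namely that $N\subset\RM_b(T,\cG)_+$, that $N$ has property~$(\beta)$, and that $N$ has the tail tightness property~$(\al^z)$. Closedness of~$N$ is automatic, being a closure.

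First I would apply Lemma~\ref{lem-closposCb-prsubs} to obtain $N\subset\RM_b(T,\cG)_+$. This positivity of the closure is the crucial ingredient: it lets me identify each total variation $|\nu|$ with~$\nu$ on~$N$, just as positivity of~$M$ identifies $|\mu|$ with~$\mu$ on~$M$.

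Next I would transfer properties~$(\al^{\pi})$ and~$(\beta)$ from~$M$ to~$N$ using Theorem~\ref{theo-alzetabar-CwcTplus}. Because every $\mu\in M$ is positive, property~$(\al^{\pi})$ for~$M$ is verbatim property~$(\al^{\pi}_{var})$ for~$M$; by assertion~(1) of that theorem this is equivalent to $(\bar\al^{\pi}_{var})$ for~$N$, and positivity of~$N$ then turns $(\bar\al^{\pi}_{var})$ back into~$(\al^{\pi})$ for~$N$. The identical passage through assertion~(2), using $|\mu|T=\mu T$ on~$M$ and $|\nu|T=\nu T$ on~$N$, yields property~$(\beta)$ for~$N$. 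Finally, property~$(\al^{\pi})$ for~$N$ trivially forces~$(\al^z)$ for~$N$: given an arbitrary net in~$N$, one takes the net itself as the required subnet, and for each $\eps>0$ the single compact set furnished by~$(\al^{\pi})$ works uniformly, so any index may serve as~$i_0$. Having equipped the closed set $N\subset\RM_b(T,\cG)_+$ with $(\al^z)$ and~$(\beta)$, I would conclude by Theorem~\ref{theo-1-CwcTplus} that $N=\cl M$ is compact in the induced weak topology $\cG_w(\RM_b,C_b)|\cl M$.

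I expect no serious obstacle, only one point of care: Theorem~\ref{theo-alzetabar-CwcTplus} is stated in terms of the total variation $|\mu|$, so the whole reduction hinges on first establishing the positivity of~$\cl M$ via Lemma~\ref{lem-closposCb-prsubs}; without it, the equivalence would only return $(\bar\al^{\pi}_{var})$ and $(\bar\beta_{var})$ for~$N$, which are not literally the hypotheses~$(\al^z)$ and~$(\beta)$ demanded by the sufficiency criterion.
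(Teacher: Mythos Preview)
Your proposal is correct and follows the same overall skeleton as the paper: establish positivity of $N\eq\cl M$ via Lemma~\ref{lem-closposCb-prsubs}, transfer $(\al^{\pi})$ and $(\beta)$ from $M$ to $N$, observe that $(\al^{\pi})\Rightarrow(\al^{z})$, and apply Theorem~\ref{theo-1-CwcTplus} to the closed set~$N$.

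The one genuine difference is in how the transfer step is carried out. You route both $(\al^{\pi})$ and $(\beta)$ through Theorem~\ref{theo-alzetabar-CwcTplus}, using positivity of $M$ and of $N$ to identify the ``var'' versions with the plain ones on each side. The paper instead handles the two properties separately: for $(\beta)$ it invokes Lemma~\ref{lem-betapr-prsubs} (the chain $(\beta)\Leftrightarrow(\bar\beta'')$), while for $(\al^{\pi})$ it gives a direct $\eps$-argument, approximating $\chi(T\razn C)$ from below by continuous functions via Corollary~\ref{cor-1-lem-seqchiC-weaktop} and then using a weak-neighbourhood estimate. Your use of Theorem~\ref{theo-alzetabar-CwcTplus} is cleaner and avoids repeating work already packaged there; the paper's hands-on argument, on the other hand, makes the passage to the closure self-contained and shows explicitly where the Tychonoff hypothesis enters (through the approximation of $\chi(T\razn C)$ by continuous functions). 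Either route is sound, and your observation that positivity of $\cl M$ is indispensable for converting $(\bar\al^{\pi}_{var})$ back into $(\al^{\pi})$ is exactly the right point of care.
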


\begin{proof}
First, note that by Lemma~\ref{lem-closposCb-prsubs} $\nu\geqslant\nul$ for every $\nu\in N\eq\cl M$ and prove that $N$ also possesses property $(\al^{\pi})$.

Let $\eps>0$. By condition there is $C\in\cC$ such that $\sup\scol{\mu(T\razn C)}{\mu\in M}\leqslant\eps/2$. 
Take $\nu\in N$ and $\de>0$. Consider the function $h\eq\chi(T\razn C)$. By Corollary~\ref{cor-1-lem-seqchiC-weaktop} to Lemma~\ref{lem-seqchiC-weaktop} there exists some net $u\eq\net{h_k}{k\in K}\uparrow$ such that $h=\plim u$ and $\net{i_{\nu}h_k}{k\in K}\uparrow\nu(T\razn C)=i_{\nu}h$.
Take $k\in K$ such that $0\leqslant i_{\nu}h-i_{\nu}h_{l}<\de$ for every $l\geqslant k$ and consider the neighbourhood 
$G\eq G(\nu,h_{k},\de)$. Take some $\mu\in M\cap G\ne\vrn$. Therefore we get 
$
\nu(T\razn C)=i_{\nu}h=i_{\nu}h-i_{\nu}h_{k}+i_{\nu}h_{k}<\de+i_{\mu}h_{k}+i_{\nu}h_{k}-i_{\mu}h_{k}
\leqslant\de+i_{\mu}h+|i_{\nu}h_{k}-i_{\mu}h_{k}|<\de+\mu(T\razn C)+\de\leqslant\eps/2+2\de.
$
Since~$\de$ is arbitrary, this implies $\nu(T\razn C)\leqslant\eps/2<\eps$.

According to Lemma~\ref{lem-betapr-prsubs}, $N$ possesses also property~$(\beta)$. Now the assertion follows immediately from Theorem~\ref{theo-1-CwcTplus} because property~$(\al^{\pi})$ is stronger than property~$(\al^{z})$.
\end{proof}

\begin{theorem}[the classical sufficiency theorem]\label{theo-PB-CwcTplus}
Let $(T,\cG)$ be a Tychonoff space, $\cl M$ be the closure of~$M\subset\RM_b(T,\cG)$ in the weak topology $\cG_w(\RM_b,C_b)$. Suppose that~$M$ has properties $(\al^{\pi}_{var})$ and~$(\beta_{var})$.
Then $\cl M$ is compact in the induced weak topology $\cG_w(\RM_b,C_b)|\cl M$.
\end{theorem}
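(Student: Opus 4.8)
The plan is to mimic the proof of Theorem~\ref{theo-1prime-SwcH}, reducing the general (signed) case to the positive sufficiency theorem (Theorem~\ref{theo-PBpos-CwcTplus}) via the Riesz decomposition and a two-stage subnet extraction. First I would pass from $M$ to its closure $N\eq\cl M$: by Theorem~\ref{theo-alzetabar-CwcTplus} the hypotheses $(\al^{\pi}_{var})$ and $(\beta_{var})$ on $M$ transfer to $N$ as $(\bar\al^{\pi}_{var})$ and $(\bar\beta_{var})$. This is the step that lets me work with the already-closed set $N$ throughout.

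Next I would split the signed measures in $N$ into their positive and negative parts. Put $L'\eq\{\nu^+\mid\nu\in N\}$ and $L''\eq\{-\nu^-\mid\nu\in N\}$, both contained in $\RM_b(T,\cG)_+$. Since $\nu^+\leqslant|\nu|$ and $-\nu^-\leqslant|\nu|$ as positive measures, the properties $(\bar\al^{\pi}_{var})$ and $(\bar\beta_{var})$ of $N$ immediately yield the Prokhorov tightness $(\al^{\pi})$ and the boundedness $(\beta)$ for each of $L'$ and $L''$. Applying the positive sufficiency theorem (Theorem~\ref{theo-PBpos-CwcTplus}) then gives that $N'\eq\cl L'$ and $N''\eq\cl L''$ are compact in the induced $\cG_w(\RM_b,C_b)$-topology.

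The core of the argument is the subnet extraction, carried out exactly as in Theorem~\ref{theo-1prime-SwcH} through Kelley's characterization of compactness (Theorem~2 in~\cite[ch.\,5]{Kelley1957}). Given an arbitrary net $s\eq\net{\nu_{\kap}}{\kap\in K}$ in $N$, I would first use the compactness of $N'$ to extract from $\net{\nu^+_{\kap}}{\kap\in K}$ a subnet $\net{\nu^+_{\kap_i}}{i\in I}$ converging to some $\nu'\in N'$; then use the compactness of $N''$ to extract from $\net{-\nu^-_{\kap_i}}{i\in I}$ a further subnet $\net{-\nu^-_{\kap_{i_j}}}{j\in J}$ converging to some $\nu''\in N''$. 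Because a subnet of a convergent net has the same limit, I still have $\nu'=\lim\bigl(\nu^+_{\kap_{i_j}}\mid j\in J\bigr)$. Setting $\nu\eq\nu'-\nu''$ and using that $\cG_w(\RM_b,C_b)$ is a vector topology (so addition commutes with these net limits), I would obtain $\nu=\lim\bigl(\nu^+_{\kap_{i_j}}+\nu^-_{\kap_{i_j}}\mid j\in J\bigr)=\lim\bigl(\nu_{\kap_{i_j}}\mid j\in J\bigr)$. Since each $\nu_{\kap_{i_j}}\in N$ and $N$ is closed, $\nu\in N$, and $\bigl(\nu_{\kap_{i_j}}\mid j\in J\bigr)$ is a subnet of $s$ converging in $N$; Kelley's criterion then forces $N$ to be compact.

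The step I expect to be the main obstacle is the bookkeeping of the two-stage subnet extraction --- in particular verifying that after the second extraction the positive-part net still converges to $\nu'$ and that the recombined net $\net{\nu_{\kap_{i_j}}}{j\in J}$ is genuinely a subnet of the original $s$ (in the sense of~1.1.15 in~\cite{ZakhRodi2018SFM1}). Everything else is a direct transcription of the Hausdorff-case proof with $S(T,\cG)$ replaced by $C_b(T,\cG)$ and Theorems~\ref{theo-alzetabar-SwcH} and~\ref{theo-1-SwcH} replaced by their Tychonoff counterparts.
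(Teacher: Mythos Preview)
Your proposal is correct and follows essentially the same route as the paper: the paper also invokes Theorem~\ref{theo-alzetabar-CwcTplus} to pass to $N=\cl M$, forms $L'$ and $L''$ from the Riesz parts, applies Theorem~\ref{theo-PBpos-CwcTplus} to each, and then declares that ``the remainder of the proof is quite similar to the proof of Theorem~\ref{theo-1prime-SwcH}'' --- i.e., exactly the two-stage subnet extraction you describe.
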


\begin{proof}
By Theorem~\ref{theo-alzetabar-CwcTplus} the set $N\eq\cl M$ has properties $(\bar\al^{\pi}_{var})$ and~$(\bar\beta_{var})$.
Consider the subsets $L'\eq\{\nu^+\in\RM_b(T,\cG)_+\mid\nu\in N\}$ and $L''\eq\{-\nu^-\in\RM_b(T,\cG)_+\mid\nu\in N\}$ of positive bounded Radon measures, where $\nu^+\eq\nu\vee\nul$, $\nu^-\eq\nu\wedge\nul$. Since $\nu^+\leqslant|\nu|$, properties $(\bar\al^{\pi}_{var})$ and $(\bar\beta_{var})$ for~$N$ imply properties~$(\al^{\pi})$ and~$(\beta)$ for~$L'$. By Theorem~\ref{theo-PBpos-CwcTplus} $N'\eq\cl L'$ is compact in the induced weak topology $\cG_w(\RM_b,C_b)|N'$. The same is valid for~$N''\eq\cl L''$.

The remainder of the proof is quite similar to the proof of Theorem~\ref{theo-1prime-SwcH}.
\end{proof}

Probably, the last theorem was firstly published in~\cite[Th.\,1 (IX.5.5)]{Bourbaki(VI9)1969}.
According to~\cite[Preface]{Topsoe1970book} this theorem traces to P.-A.~Meyer and L.~Schwartz. 

\bibliographystyle{spmpsci} 
\bibliography{ZR-weak}

\end{document}